\theoremstyle{plain}
\newtheorem{dummy}{anything}[section]
\newtheorem{theorem}[dummy]{Theorem}
\newtheorem{lemma}[dummy]{Lemma}
\newtheorem{proposition}[dummy]{Proposition}
\newtheorem{corollary}[dummy]{Corollary}
\theoremstyle{definition}
\newtheorem{definition}[dummy]{Definition}
\newtheorem{example}[dummy]{Example}
\newtheorem{remark}[dummy]{Remark}
\theoremstyle{remark}
\newcommand{\A}{\mathcal{A}}
\newcommand{\del}{\partial}
\newcommand{\C}{\mathbb{C}}
\newcommand{\OB}{\mathcal{OB}}
\def\cp{\hbox{${\mathbb C} {\mathbb P}^2$}}
\def\cpb{\hbox{$\overline{{\mathbb C} {\mathbb P}^2}$}}
\newcommand{\Si}{\Sigma}
\def\a{\alpha}
\def\b{\beta}
\def\d{\delta}
\def\g{\gamma}
\def\G{\Gamma}
\def\l{\lambda}
\def\e{\varepsilon}
\begin{document}

\title{Trisections of 4-manifolds via Lefschetz fibrations}

\author{Nickolas A. Castro and Burak Ozbagci}

\address{Department of Mathematics, UC Davis, Davis, CA 95616}
\email{ncastro@math.ucdavis.edu} 

\address{Department of Mathematics, UCLA, Los Angeles, CA 90095 and Department of Mathematics, Ko\c{c} University, 34450,  Istanbul, Turkey}
\email{bozbagci@ku.edu.tr}

\subjclass[2000]{}

\thanks{The second author was partially supported by a research grant of the Scientific and Technological Research Council of Turkey. }


\begin{abstract} We develop a technique for  gluing relative trisection diagrams of $4$-manifolds with nonempty connected boundary to obtain  trisection diagrams for  closed $4$-manifolds.  As an application,  we describe a  trisection of any closed $4$-manifold which admits a Lefschetz fibration over $S^2$ equipped with a section of square $-1$, by an explicit diagram determined by the vanishing cycles of the Lefschetz fibration. In particular, we obtain a trisection diagram for some simply connected minimal complex surface of general type. As a consequence, we obtain  explicit trisection diagrams for  a pair of  closed $4$-manifolds which are homeomorphic but not diffeomorphic. Moreover, we describe a trisection for any oriented $S^2$-bundle over any closed surface and in particular  we draw the  corresponding diagrams for $T^2 \times S^2$ and $T^2 \tilde{\times} S^2$ using our gluing technique.   Furthermore, we provide an  alternate proof of a recent result of Gay and Kirby which says that every closed $4$-manifold admits a trisection. The key feature of our proof is that Cerf theory takes a back seat to contact geometry.   
\end{abstract}
 
\maketitle

\section{Introduction}\label{sec: intro}

Recently, Gay and Kirby \cite{gk} proved that every smooth, closed, oriented, connected $4$-manifold $X$ admits a trisection, meaning that  for every $X$, there exist non-negative integers $g \geq k$ such that $X$ is diffeomorphic to the union $X_1 \cup X_2 \cup X_3$ of three copies of the $4$-dimensional $1$-handlebody $ X_i \cong \natural^k (S^1 \times  B^3) $, intersecting pairwise in $3$-dimensional handlebodies, with triple intersection a closed, oriented, connected $2$-dimensional surface $\Si_g$ of genus $g$.  Such a decomposition of $X$ is called a $(g,k)$-trisection or simply a genus $g$ trisection, since $k$ is determined by $g$ using the fact that $\chi (X)=2+g-3k$, where $\chi (X)$ denotes the Euler characteristic of $X$. 

Moreover, they showed that the trisection data can be encoded  as a $4$-tuple $(\Si_g, \a, \b, \g)$, which is called a $(g,k)$-trisection diagram,  such that each triple $(\Si_g, \a, \b)$, $(\Si_g, \b, \g)$, and  $(\Si_g, \g, \a)$ is a genus $g$ Heegaard diagram for  ${\#}^k (S^1 \times S^2)$.   Furthermore, they proved that trisection of $X$ (and its diagram)  is unique up to a natural stabilization operation. 

On the other hand, various flavors of Lefschetz fibrations have been studied extensively in the last two decades to understand the topology of smooth $4$-manifolds. Suppose that a closed $4$-manifold $X$ admits a Lefschetz fibration over $S^2$, whose regular fiber is a smooth, closed, oriented, connected  surface $\Si_p$ of genus $p$. The fibration induces a handle decomposition of $X$, where the essential data can be encoded by a finite set of ordered simple closed curves (the vanishing cycles) on a surface diffeomorphic to $\Si_p$. The only condition imposed on the set curves is that the product of right-handed Dehn twists along these curves is isotopic to the identity diffeomorphism of $\Si_p$.
 
In addition, every $4$-manifold $W$ with nonempty boundary has a  relative trisection and under favorable circumstances $W$ also admits an \emph{achiral} Lefschetz fibration over $B^2$ with bounded fibers. The common feature shared by these structures is that each induces a natural open book on $\del W$. To exploit this feature in the present paper, we develop a technique to obtain  trisection diagrams for closed $4$-manifolds by gluing relative trisection diagrams of $4$-manifolds with nonempty connected boundary. The precise result is stated in Proposition~\ref{prop:gluediagram},  which is too technical to include in the introduction. Nevertheless, our gluing technique has several applications --- one of which is the following result.  
 
{\bf Theorem 3.7.} {\em Suppose that $X$ is a  smooth, closed, oriented, connected $4$-manifold  which admits a genus $p$ Lefschetz fibration over $S^2$ with $n$ singular fibers, equipped with a section of square $-1$. Then, an explicit  $(2p+n+2, 2p)$-trisection of $X$ can be described  by a  corresponding trisection diagram,  which is  determined by the vanishing cycles of the Lefschetz fibration.     Moreover, if $\widetilde{X}$ denotes the $4$-manifold obtained from $X$ by blowing down the section of square $-1$, then we also obtain a $(2p+n+1, 2p)$-trisection of $\widetilde{X}$ along with a corresponding diagram.  }

In particular, Theorem~\ref{thm: main} provides a description of a $(46, 4)$-trisection diagram of the Horikawa surface $H'(1)$ (see \cite[page 269]{gs} for its definition and properties),    a simply connected  complex surface of general type which admits a genus $2$ Lefschetz fibration over $S^2$ with $40$ singular fibers, equipped with a section of square $-1$.  This section is the unique sphere in $H'(1)$ with self-intersection $-1$ so that  by blowing it down,  we obtain a trisection diagram for the simply connected  {\em minimal} complex surface $\widetilde{H'(1)}$  of general type.  

To the best of our knowledge, none of the existing methods in the literature can be effectively utilized to obtain explicit trisection diagrams for complex surfaces of general type. For example,  Gay and Kirby describe  trisections of $S^4$, $\cp$, $\cpb$, closed $4$-manifolds admitting  locally trivial fibrations over $S^1$ or $S^2$ (including of course $S^1 \times S^3$, $S^2 \times S^2$ and $S^2 \tilde{\times} S^2$) and  arbitrary  connected sums of these in  \cite{gk}. 

Note that, by Freedman's celebrated theorem, the Horikawa surface $H'(1)$ is homeomorphic to $5 \cp \# 29 \cpb$ (and also to the elliptic surface $E(3)$), since it is simply connected, nonspin and its Euler characteristic is $36$, while its signature is $-24$.  On the other hand, since $H'(1)$ is a simply connected complex surface (hence K\"{a}hler) with $b_2^+ (H'(1)) >1$, it has non-vanishing Seiberg-Witten invariants,  while   $5 \cp \# 29 \cpb$ has vanishing Seiberg-Witten invariants which follows from the fact that $5 \cp \# 29 \cpb = \cp \# (4 \cp \# 29 \cpb)$. Hence, we conclude that  $H'(1)$ is certainly not diffeomorphic to $5 \cp \# 29 \cpb$. 

As a consequence, we obtain explicit $(46,4)$-trisection diagrams for a pair of  closed $4$-manifolds, the Horikawa surface $H'(1)$ and  $5 \cp \# 29 \cpb$, which are homeomorphic but not diffeomorphic. Note that $5 \cp \# 29 \cpb$ has a natural $(34,0)$-trisection diagram (obtained by the connected sum of the standard $(1,0)$-trisection diagrams of $\cp$ and $\cpb$), which can be  
stabilized four times to yield a  $(46,4)$-trisection diagram.

More generally,  Theorem~\ref{thm: main}   can be applied to a large class of $4$-manifolds. A fundamental result of Donaldson \cite{don} says that every closed \emph{symplectic}  $4$-manifold admits a Lefschetz pencil over $\C \mathbb{P}^1 \cong S^2$. By blowing up its base locus the Lefschetz pencil can be turned into a Lefschetz fibration over $S^2$, so that each exceptional sphere becomes a (symplectic)  section of square $-1$. Conversely,  any $4$-manifold $X$ satisfying the hypothesis of Theorem~\ref{thm: main} must carry a symplectic structure where the section of square $-1$ can be assumed to be symplectically embedded.  Therefore, $X$ is necessarily  a nonminimal symplectic $4$-manifold.   


In \cite[Theorem 3]{g}, Gay describes a trisection for any closed $4$-manifold $X$ admitting a Lefschetz pencil, although he does not formulate the trisection of $X$ in terms of the vanishing cycles of the pencil (see \cite[Remark 9]{g}). He also points out that his technique does not extend to cover the case  of Lefschetz fibrations on closed $4$-manifolds \cite[Remark 8]{g}.  

We would like to point out that Theorem~\ref{thm: main}   holds true for any \emph{achiral} Lefschetz fibration  $\pi_X:  X \to S^2$  equipped with a section of square $-1$. In this case, $X$ is not necessarily symplectic. We opted to state our result only for Lefschetz fibrations   to emphasize their connection to symplectic geometry.

Next  we turn our attention to another natural application of our  gluing technique where we find trisections of  doubles of $4$-manifolds with nonempty connected boundary.  It is well-known (see, for example, \cite[Example 4.6.5]{gs}) that there are  two oriented $S^2$-bundles over  a closed, oriented,  connected surface $\Si_h$ of genus $h$: the trivial  bundle $\Si_h \times S^2$ and the twisted bundle  $\Si_h \tilde{\times} S^2$. The former is the double of any $B^2$-bundle over $\Si_h$ with even Euler number, while the latter is  the double of any $B^2$-bundle over $\Si_h$ with odd Euler number. We obtain  trisections of  these $S^2$-bundles by doubling the relative trisections of the appropriate  $B^2$-bundles. In particular, we draw the  corresponding $(7,3)$-trisection diagram for $T^2 \times S^2$ and the $(4,2)$-trisection diagram for $T^2 \tilde{\times} S^2$ using our gluing technique. 

For any  $h \geq 1$, the twisted bundle $\Si_h \tilde{\times} S^2$ is not covered by the examples in \cite{gk}, while our trisection for $\Si_h \times S^2$  has smaller genus compared to that of given in \cite{gk}.  We discuss the case of  oriented $S^2$-bundles over nonorientable surfaces in Section~\ref{subsec:nonorie}. 

Finally, we provide  a simple alternate proof of the following result due to Gay and Kirby. 

{\bf Theorem 5.1.}  {\em Every smooth, closed, oriented, connected $4$-manifold admits a trisection.}

Our proof is genuinely different from the two original proofs due to Gay and Kirby \cite{gk}, one with Morse $2$-functions and one with ordinary Morse functions,  since not only contact geometry plays a crucial role in our proof, but we also employ a technique for gluing relative trisections.

After the completion of our work, we learned that Baykur and Saeki \cite{bs}  gave yet another proof of Theorem~\ref{thm: existence}, setting up a correspondence between broken Lefschetz fibrations and trisections on $4$-manifolds, using a method which is very different from ours. In particular, they prove the  existence of a $(2p+k+2, 2k)$-trisection  on a 4-manifold $X$ which admits a genus $p$ Lefschetz fibration over $S^2$ with $k$ Lefschetz singularities --- generalizing the first assertion in our Theorem~\ref{thm: main}, but without providing the corresponding explicit diagram for the trisection.  They also give examples of trisections (without diagrams) on a pair of closed $4$-manifolds (different from ours)  which are homeomorphic but not diffeomorphic.  In addition, for any $h \geq 0$, they give small genus trisections (again without the diagrams) for $\Si_h \times S^2$.

{\bf Conventions:} All $4$-manifolds are assumed to be smooth, compact, oriented and connected throughout the paper. The corners which appear in gluing manifolds are smoothed in a canonical way.

\section{Gluing relative trisections} 

We first review some  basic results about trisections and their diagrams (cf. \cite{gk}).  Let $Y^+_{g,k} \cup Y^-_{g,k}$ denote the standard genus $g$ Heegaard splitting of $\#^kS^1\times S^2 $ obtained by stabilizing the standard genus $k$ Heegaard splitting $g-k$ times.

\begin{definition} \label{def:tris}
A $(g,k)$-trisection   of a closed $4$-manifold $X$ is a decomposition $X = X_1 \cup X_2 \cup X_3$ such that for each $i=1,2,3$, 
	\begin{enumerate}[i)]
		\item there is a diffeomorphism $\varphi_i: X_i \rightarrow \natural^k S^1 \times B^3,$ and 
		\item taking indices mod $3$, $\varphi_i(X_i \cap X_{i+1}) = Y^+_{g,k}$ and $\varphi_i(X_i \cap X_{i-1}) = Y^-_{g,k}$. 
	\end{enumerate} \end{definition}
It follows that  $X_1 \cap X_2 \cap X_3$ is a closed surface of genus $g$.  Also note that $g$ and $k$ determine each other, since  
the Euler characteristic $\chi(X)$ is  equal to $2+g-3k$, which can be easily derived by gluing $X_1$ and $X_2$ first and then gluing $X_3$. 

Suppose that each of $\eta$ and $\zeta$ is a collection of $m$ disjoint simple closed curves on some compact surface $\Sigma$. We say that two  such triples $(\Si, \eta, \zeta)$ and $(\Si', \eta', \zeta')$ are \emph{diffeomorphism and handleslide equivalent} if there exists a diffeomorphism $h: \Si \to \Si'$ such that $h(\eta)$ is related to $\eta'$ by a sequence of handleslides and $h(\zeta)$ is related to $\zeta'$ by a sequence of handleslides.  

\begin{definition} \label{def:trisd}
A $(g,k)$-trisection diagram is an ordered $4$-tuple $(\Sigma, \alpha, \beta, \gamma)$ such that
\begin{enumerate}[i)]
	\item $\Sigma$ is a closed genus $g$ surface,
	\item each of $\alpha, \beta$, and  $\gamma$ is a  non-separating collection of $g$ disjoint, simple closed curves on $\Si_g$, 
	\item each triple $(\Sigma, \alpha, \beta)$, $(\Sigma, \beta, \gamma),$ and $(\Sigma, \alpha, \gamma)$ is diffeomorphism and handleslide equivalent  to the standard genus $g$ Heegaard diagram of $\#^k S^1\times S^2$ depicted in Figure~\ref{fig:standardpairs}. 
\end{enumerate}
\end{definition}

\begin{figure}[ht]
\labellist
		\pinlabel $\cdots$ at 64 31
		\pinlabel {\rotatebox{90}{\resizebox{9pt}{2.3cm}{$\}$}}} at 64 74
		\pinlabel $g-k$ at 64 87
		
		\pinlabel $\cdots$ at 171 31
		\pinlabel {\rotatebox{90}{\resizebox{9pt}{2.3cm}{$\}$}}} at 171 74
		\pinlabel $k$ at 171 87
	\endlabellist	
			\includegraphics[scale=1]{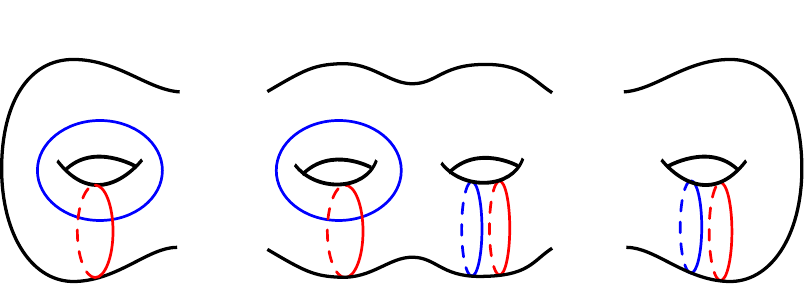}
	\caption{The standard genus $g$ Heegaard diagram of $\#^kS^1\times S^2 $ obtained by stabilizing the standard genus $k$ Heegaard diagram  $g-k$ times. ~\label{fig:standardpairs} }
\end{figure}

According to Gay and Kirby \cite{gk}, every closed $4$-manifold admits a trisection, which in turn, can be encoded by a diagram. Conversely,  every trisection diagram determines a trisected closed $4$-manifold,  uniquely up to diffeomorphism.

Next we recall the analogous definitions of \emph{relative}  trisections and their diagrams  for $4$-manifolds with nonempty connected boundary (cf. \cite{gk, cgp}). 

Suppose that $W$ is a $4$-manifold with nonempty connected boundary $\del W$. We would like to find a decomposition  $W=W_1 \cup W_2 \cup W_3$, such that each $W_i$ is diffeomorphic to $\natural^k  S^1 \times B^3$ for some fixed $k$.  Since $\partial W \neq \emptyset$, it would be natural to require that part of each $\del W_i$ contribute to $\del W$.    Hence, we need a particular decomposition of $\del(\natural^k  S^1 \times B^3 )=  \#^k S^1\times S^2$ to specify a submanifold  of $\del W_i$ to be embedded in $\partial W$. With this goal in mind, we proceed as follows to develop the language we will use throughout the paper.

Suppose that  $g,k,p,b$ are non-negative integers  satisfying $b >0$ and $$g+p+b-1 \geq k \geq 2p+b-1.$$  Let $Z_k = \natural^k S^1\times B^3$ and $Y_k=\partial Z_k = \#^k S^1\times S^2$ for $k \geq 1$,  and  $Z_0 = B^4$, $Y_0=S^3$.

We denote by $P$ a fixed genus $p$ surface with $b$ boundary components. Let $$D=\{re^{i\theta} \in \mathbb{C} \; | 
\;  r \in [0,1] \; \mbox{and}  \; -\pi/3\leq \theta \leq \pi/3\}$$ be a third of the unit disk in the complex plane whose boundary is decomposed as $\partial D = \partial^-D \cup \partial^0 D\cup  \partial^+D$, where 
\begin{align*}
	\partial^-D& = \{re^{i\theta} \in \partial D \; | \; \theta=-\pi/3\}\\
	\partial^0D& = \{e^{i\theta} \in \partial D\}\\
	\partial^+D& =\{re^{i\theta} \in \partial D \; | \; \theta=\pi/3\}
\end{align*} The somewhat unusual choice of the disk $D$ will be justified by the construction below.  We set  $U = P \times D$, which is indeed diffeomorphic to $\natural^{2p+b-1}S^1\times B^3$. Then,   $\partial U$ inherits a decomposition $\partial U = \partial^-U \cup \partial^0U \cup \partial ^+U$, where  $\partial^0U=(P \times \partial^0D) \cup (\partial P \times D)$ and $\partial^{\pm}U = P \times \partial^{\pm}D$.

Let $\partial(S^1\times B^3) = H_1 \cup H_2$ be the standard genus one Heegaard splitting of $S^1\times S^2$. For any $n>0$, let $V_n = \natural^n(S^1\times B^3)$, where the boundary connected sum is taken in neighborhoods of the Heegaard surfaces, inducing the standard genus $n$ Heegaard splitting of $\partial V_n =\#^n S^1\times S^2 = \partial^-V_n \cup \partial^+V_n$. Stabilizing this Heegaard splitting $s$ times we obtain a genus $n+s$ Heegaard splitting of $\partial V_n = \partial^-_s V_n \cup \partial^+_s V_n$.

We set $s=g-k+p+b-1$ and  $n=k-2p-b+1$.  Note that we can identify $Z_k = U\natural V_n$,  where the boundary connected sum again takes place along the neighborhoods of points in the Heegaard surfaces. We now have a decomposition of $\partial Z_k$ as follows: 
\begin{align*}
	\partial Z_k = Y_k = Y^+_{g,k;p,b} \cup Y^0_{g,k;p,b} \cup Y^-_{g,k;p,b},
\end{align*}
where $Y^\pm_{g,k;p,b} = \partial^\pm U \natural \partial^{\pm}_s V_n$ and $Y^0_{g,k;p,b}= \partial^0 U.$

\begin{definition} \label{def: rela}
A $(g,k;p,b)$-relative trisection of a $4$-manifold $W$ with non-empty connected boundary is a decomposition $W= W_1 \cup W_2 \cup W_3$ such that for each $i=1,2,3$,
	\begin{enumerate}[i)]
		\item there is a diffeomorphism $\varphi_i: W_i \rightarrow Z_k = \natural^k S^1 \times B^3,$ and 
		\item taking indices mod $3$, $\varphi_i(W_i \cap W_{i+1}) = Y^+_{g,k;p,b}$ and $\varphi_i(W_i \cap W_{i-1}) = Y^-_{g,k;p,b}.$
	\end{enumerate}

\end{definition}
As a consequence, $W_1 \cap W_2 \cap W_3$ is diffeomorphic to  $Y^-_{g,k;p,b} \cap Y^+_{g,k;p,b}$, which is a genus $g$ surface with $b$ boundary components.  Note that the Euler characteristic $\chi(W)$ is equal to  $g-3k+3p+2b-1$, which can be calculated directly from the definition of a relative trisection. We also give alternate method to calculate $\chi(W)$  in Corollary~\ref{cor: euler}.

According to \cite{gk}, every  $4$-manifold $W$ with nonempty connected boundary admits a trisection. Moreover, there is a natural open book induced on $\partial W$, whose page is diffeomorphic to $P$, which is an  essential ingredient in our definition of  $Y^{\pm}_{g,k;p,b}$.

Informally, the contribution of each $\del W_i$ to $\del W$ is  one third of an open book.   This is because the part of each $\del W_i$ that contributes to $\del W$ is diffeomorphic to $$Y^0_{g,k;p,b} = \partial^0U=(P \times \partial^0D) \cup (\partial P \times D), $$ where  $P \times \partial^0D$ is one third of the truncated pages, while  $\partial P \times D$ is one third of the neighborhood of the binding.   In other words,  not only  we trisect the $4$-manifold $W$, but we also trisect its boundary $\del W$. Conversely,  if an open book is fixed on $\partial W$, then $W$ admits a trisection whose induced open book coincides with the given one.

\begin{definition}
A $(g,k;p,b)$-relative trisection diagram is an  ordered $4$-tuple $(\Sigma, \alpha, \beta, \gamma)$ such that 
\begin{enumerate}[i)]
	\item $\Sigma$ is a genus $g$ surface with $b$ boundary components,
	\item each of $\alpha, \beta$ and $\gamma$ is a collection of $g-p$ disjoint, essential, simple closed curves,
	\item each triple $(\Sigma, \alpha, \beta), (\Sigma, \beta, \gamma)$, and $(\Sigma, \alpha, \gamma)$ is diffeomorphism and handleslide equivalent to the diagram depicted in  Figure~\ref{fig:StandardPosition}.
\end{enumerate}

\end{definition}

\begin{figure}[ht] 
\vspace*{12pt}
	\labellist
		\pinlabel \rotatebox{-90}{\resizebox{8pt}{.85in}{$\{$}} at 64 84
		\pinlabel \resizebox{.9in}{!}{$g+p+b-1-k$} at 64 97
		\pinlabel $\cdots$ at 65 42
		
		\pinlabel \rotatebox{-90}{\resizebox{8pt}{.85in}{$\{$}} at 169 84
		\pinlabel \resizebox{.86in}{!}{$k-2p-b+1$} at 169 97
		\pinlabel $\cdots$ at 170 42
		
		\pinlabel \rotatebox{-90}{\resizebox{8pt}{.85in}{$\{$}} at 274 84
		\pinlabel $p$ at 274 97
		\pinlabel $\cdots$ at 275 42
		
		\pinlabel \resizebox{7pt}{.8in}{$\}$} at 355 43
		\pinlabel $b$ at 365 43
		\pinlabel \rotatebox{90}{$\cdots$} at 345 43
	\endlabellist
\includegraphics[scale=.85]{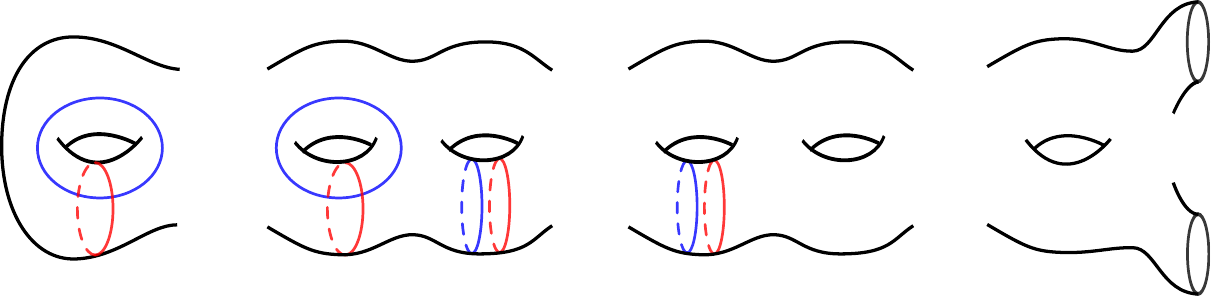}
	\caption{ ~\label{fig:StandardPosition}}
	\end{figure}

It was shown in \cite{cgp} that every relative trisection diagram determines uniquely,  up to diffeomorphism, (i)  a relatively trisected $4$-manifold $W$ with nonempty connected boundary and (ii) the open book on $\del W$ induced by the trisection. Moreover, the page and the monodromy of the open book on  $\partial W$ is determined completely by the relative trisection diagram by an explicit algorithm, which we spell out below. 

Suppose that $(\Si, \a, \b ,\g)$  is a  $(g,k;p,b)$-relative trisection diagram, which represents a relative trisection of a  $4$-manifold $W$ with nonempty connected boundary.  The page of the induced open book $\OB$ on $\del W$ is given by $\Si_\a$, which is the genus $p$ surface with $b$ boundary components   obtained from $\Si$ by performing surgery along the $\a$ curves. This means that to obtain $\Si_\a$,  we cut open $\Si$ along each $\a$ curve and glue in disks to cap off the resulting boundaries. 

Now that we have a fixed identification of the page of $\OB$ as $\Si_\a$, we use Alexander's trick  to describe the monodromy $\mu : \Si_\a \to \Si_\a$ of $\OB$.   Namely,  we cut $\Si_\a$ into a single disk via two distinct ordered collections of $2p+b-1$ arcs, so that for each arc in one collection there is an arc in the other collection with the same endpoints. As a result,  we get a self-diffeomorphism of $S^1$ that takes one collection of arcs to the other respecting the ordering of the arcs, and equals to the identity otherwise. This diffeomorphism uniquely extends to a  self-diffeomorphism of  the disk, up to isotopy.  Therefore,  we get a self-diffeomorphism $\mu$  of $\Si_\a$ fixing $\del \Si_\a$ pointwise, which is uniquely determined up to isotopy. Next, we provide some more details (see \cite[Theorem 5]{cgp}) about how to obtain the aforementioned collection of arcs.

Let $\A_\alpha$ be any ordered collection of disjoint, properly embedded $2p+b-1$ arcs in $\Sigma$ disjoint from $\a$, such that the image of $\A_\a$ in $\Sigma_\a$ cuts $\Sigma_\alpha$  into a disk. We choose a collection of arcs $\A_\b$,  and a collection of simple closed curves $\b'$ disjoint from  $\A_\b$ in $\Si$ such that $(\a, \A_\b)$ is handleslide equivalent to $(\a, \A_\a)$, and $\b'$ is handleslide equivalent to $\b$. This means that $\A_\b$ arcs are obtained by sliding $\A_\a$ arcs over $\a$ curves, and $\b'$ is obtained by sliding  $\b$  curves over $\b$ curves. Next we choose a collection of arcs $\A_\g$, and a collection of simple closed curves $\g'$ disjoint from  $\A_\g$ in $\Si$ such that $(\b', \A_\g)$ is handleslide equivalent to $(\b', \A_\b)$, and $\g'$ is handleslide equivalent to $\g$. This means that $\A_\g$ arcs are obtained by sliding $\A_\b$ arcs over $\b'$ curves,  and $\g'$ is obtained by sliding  $\g$  curves over $\g$ curves.
Finally,  we choose a collection of arcs $\A$, and a collection of simple closed curves $\a'$ disjoint from  $\A$ in $\Si$ such that $(\g', \A)$ is handleslide equivalent to $(\g', \A_\g)$, and $\a'$ is handleslide equivalent to $\a$. This means that $\A$ arcs are obtained by sliding $\A_\g$ arcs over $\g'$ curves,  and $\a'$ is obtained by sliding  $\a$  curves over $\a$ curves. It follows that $(\a', \A)$  is handleslide equivalent to $(\a, \A_*)$ for some collection of arcs $\A_*$ disjoint from $\a$ in $\Si$.

\begin{definition} \label{def: cutarc} We call the triple $(\A_\a, \A_\b, \A_\g)$ a cut system of arcs associated to the diagram  $(\Si, \a, \b ,\g)$.  \end{definition} 

Now we have two ordered collections of $2p+b-1$ arcs $\A_\a$ and $\A_*$ in $\Si \setminus \a$, such that each of their images in $\Si_\a$ cuts $\Si_\a$ into a disk. Then, as we explained above, there is a unique diffeomorphism  $\mu:\Sigma_\alpha \rightarrow \Sigma_\alpha$, up to isotopy,  which fixes $\partial \Si_\a$ pointwise such that  $\mu(\A_\alpha)= \A_*$.  

\begin{remark}
It is shown in \cite{cgp} that, up to isotopy,  the monodromy of the resulting open book is independent of the choices in the above algorithm. 
\end{remark}

Next,  we give a very simple version of the general gluing theorem \cite{c} for relatively trisected $4$-manifolds. Here we present a different proof --- where we use the definition of a relative trisection as given in \cite{cgp} instead of \cite{gk} --- for the case of a single boundary component.

\begin{lemma} \label{lem:gluem}
Suppose that  $W$ and $W'$ are  $4$-manifolds such that $\partial W$ and $\partial W'$ are both nonempty and connected.  Let $W=W_1 \cup W_2 \cup W_3$ and $W'=W_1' \cup W_2'\cup W_3'$  be $(g, k; p, b)$- and $(g', k';  p', b')$-relative trisections with  induced open books $\OB$ and $\OB'$ on $\partial W$ and $\partial W'$, respectively. If $f: \partial W \rightarrow \partial W'$ is an orientation-reversing diffeomorphism which takes  $\OB$ to $\OB'$ (and hence $p'=p$ and $b'=b$), then the relative trisections on $W$ and $W'$ can be glued together to yield a $(G,K)$-trisection of the closed $4$-manifold $X= W \cup_f W'$, where $G=g+ g' + b-1$ and  $K = k + k' - (2p + b-1)$.   
\end{lemma}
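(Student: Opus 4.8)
The strategy is to produce the three pieces $X_i = W_i \cup_f W_i'$ of the desired trisection of $X = W \cup_f W'$ directly, and then verify the two conditions of Definition~\ref{def:tris}. First I would set up the gluing so that the diffeomorphism $f \colon \partial W \to \partial W'$ matching the open books $\OB$ and $\OB'$ is arranged (after an isotopy) to respect the tripartition of the boundaries: recall that each $\partial W_i$ meets $\partial W$ in a copy of $Y^0_{g,k;p,b} = (P \times \partial^0 D) \cup (\partial P \times D)$, i.e.\ one third of the open book, and similarly for $W_i'$. Since $f$ carries $\OB$ to $\OB'$, and the three ``thirds'' of an open book are canonically cyclically ordered by the $D$-coordinate, I can assume $f$ sends the piece of $\partial W$ coming from $W_i$ to the piece of $\partial W'$ coming from $W_i'$ for each $i$ (relabeling the $W_i'$ cyclically if necessary; an orientation-reversing $f$ preserves the cyclic order on the boundary open book). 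This is the step I expect to be the main obstacle: one must check carefully that ``$f$ takes $\OB$ to $\OB'$'' can be upgraded to ``$f$ takes the trisected boundary of $W$ to the trisected boundary of $W'$,'' using that the trisection of $\partial W$ induced by $W$'s relative trisection is determined by $\OB$ up to the appropriate equivalence — essentially the uniqueness statement for the induced open book from \cite{cgp}, together with the fact that the collar $Y^0 \cong \frac13(\text{open book})$ is intrinsic to $\OB$.

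Granting that, set $X_i = W_i \cup_{f} W_i'$ along the common piece $Y^0_{g,k;p,b} \cong Y^0_{g',k';p,b}$. The first thing to check is $X_i \cong \natural^K(S^1 \times B^3)$. Here I would use the explicit product description $U = P \times D$ and the Heegaard picture: gluing $Z_k = U \natural V_n$ to $Z_{k'} = U \natural V_{n'}$ along $\partial^0 U$ amounts to doubling $U = P \times D$ along $(P \times \partial^0 D) \cup (\partial P \times D) = P \times (\partial^0 D) \ \cup\ \partial P \times D$, which is a sub-handlebody of $\partial U$, and then boundary-summing on the $V_n$ and $V_{n'}$ pieces. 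Doubling $P \times D$ along that portion of its boundary gives $P \times (D \cup_{\partial^0 D \text{ part}} D')$; since $D$ glued to another copy of $D$ along the arc $\partial^0 D$ (with the rest of $\partial D$ free) is again a disk, one gets $P \times (\text{disk}) \cong \natural^{2p+b-1}(S^1 \times B^3)$, and adjoining $V_n \natural V_{n'} = \natural^{n+n'}(S^1\times B^3)$ yields $\natural^{(2p+b-1) + n + n'}(S^1 \times B^3)$. A bookkeeping computation with $n = k - 2p - b + 1$, $n' = k' - 2p - b + 1$ gives $(2p+b-1) + n + n' = k + k' - (2p+b-1) = K$, as claimed. (Alternatively one can verify this via Euler characteristics using $\chi(W) = g - 3k + 3p + 2b - 1$ and $\chi(X) = 2\chi(W)+2\chi(W') - \chi(\partial W) \cdot$(something) — but the hands-on product argument is cleaner and also identifies the diffeomorphism type, not just the homotopy type.)

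Next I would verify condition (ii): $X_i \cap X_{i+1}$ is the standard $Y^+_{G,K}$ inside $\partial X_i \cong \#^K(S^1\times S^2)$. By construction $X_i \cap X_{i+1} = (W_i \cap W_{i+1}) \cup (W_i' \cap W_{i+1}')$, glued along the overlap in $\partial W$ resp.\ $\partial W'$. Now $W_i \cap W_{i+1} \cong Y^+_{g,k;p,b} = \partial^+ U \natural \partial^+_s V_n$, and this is a $3$-dimensional handlebody-with-pieces whose intersection with $Y^0$ is exactly one side of the page; $\partial^+ U = P \times \partial^+ D \cong P \times [0,1]$, a product. Gluing $\partial^+ U$ to $\partial^{+} U$ (the $W'$ copy) along $P \times \{pt\} \cup \partial P \times(\text{arc})$ — i.e.\ along the truncated page and part of the binding neighborhood — reassembles the closed Heegaard surface $\Sigma_G$ of genus $G = g + g' + b - 1$ (two genus-$p$, $b$-holed pages glued along their boundary circles contribute $b-1$ to the genus via the $b$ gluing circles, plus $g$ and $g'$ from the handle parts), and similarly assembles the genus $g$ and $g'$ handlebody data into a genus $G$ handlebody. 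The key point is that the curve systems: on the $W$-side the $\beta$-curves of the relative diagram together with the arcs of the cut system $\A_\beta$ complete, after gluing to the $W'$-side's $\beta'$-curves and arcs, to the full $\alpha\beta$- (resp.\ $\beta\gamma$-, $\gamma\alpha$-) system of a standard genus $G$ Heegaard diagram of $\#^K(S^1\times S^2)$ — precisely because the arc/curve slide data in the algorithm preceding Definition~\ref{def: cutarc} was engineered so that the monodromies of $\OB$ and $\OB'$ are inverse to one another under $f$, which is exactly the compatibility needed for the two ``open Heegaard diagrams'' to close up to a standard one. So I would invoke Proposition~\ref{prop:gluediagram} (the diagram-level gluing statement) for this verification, reducing the present lemma to its diagrammatic shadow. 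Finally, the arithmetic $G = g+g'+b-1$, $K = k+k'-(2p+b-1)$ is checked once more against $\chi(X) = 2 + G - 3K$ using the relative Euler characteristic formula, as a consistency check. The one genuinely delicate point, as flagged, is the boundary-matching step, and I would isolate it as a preliminary claim: \emph{an orientation-reversing diffeomorphism $f\colon \partial W \to \partial W'$ taking $\OB$ to $\OB'$ may be isotoped to take the $\partial W$-trisection induced by $\{W_i\}$ to the $\partial W'$-trisection induced by $\{W_i'\}$}, whose proof uses the uniqueness of the induced open book from \cite{cgp} and the fact that the ``thirds'' $Y^0$ are determined by the open book.
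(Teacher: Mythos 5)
Your verification of condition (i) — that $X_i \cong \natural^K(S^1\times B^3)$ by doubling $U = P\times D$ along $\partial^0 U$ and then boundary-summing the $V_n$ and $V_{n'}$ pieces — is essentially the paper's argument, and the arithmetic is correct. (One small caveat: the doubling is not just along $P\times \partial^0 D$; you must also handle the $\partial P \times D$ portion of $\partial^0 U$, which the paper does by quotienting $P\times[0,2]$ by $(x,t)\sim(x,2-t)$ for $x\in\partial P$. Your ``$D \cup_{\partial^0 D} D' = $ disk'' picture glosses over this, though the conclusion survives.) Your preliminary claim that $f$ can be arranged to respect the tripartitions of $\partial W$ and $\partial W'$ is also a fair articulation of what the paper asserts with ``we may assume that $f_i = \varphi_i'\circ f\circ\varphi_i^{-1}$ is a diffeomorphism for each $i$.''

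However, there is a genuine logical gap in your verification of condition (ii). You write that you would ``invoke Proposition~\ref{prop:gluediagram} (the diagram-level gluing statement) for this verification, reducing the present lemma to its diagrammatic shadow.'' This is circular: Proposition~\ref{prop:gluediagram} is stated and proved \emph{on top of} Lemma~\ref{lem:gluem} — its conclusion is precisely that $(\Sigma^*,\alpha^*,\beta^*,\gamma^*)$ represents ``the trisection $X = W\cup_f W' = X_1\cup X_2\cup X_3$ described in Lemma~\ref{lem:gluem}.'' You cannot use it to establish the lemma it presupposes. Moreover, reaching for curve systems here is unnecessary and introduces concerns (e.g.\ your appeal to the monodromies of $\OB$ and $\OB'$ being ``inverse to one another under $f$'') that play no role in the actual argument. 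The paper verifies condition (ii) directly at the handlebody level: it shows that
\[
\bigl(Y^+_{g,k;p,b}\cup Y^+_{g',k';p,b}\bigr)\big/\!\sim\ \cong\ \bigl((\partial^+ U\cup\partial^+ U')\big/\!\sim\bigr)\,\natural\,\bigl(\partial^+_s V_n\,\natural\,\partial^+_{s'}V_{n'}\bigr)\ \cong\ \natural^G(S^1\times B^2),
\]
using that the boundary connect sums occur along Heegaard-surface interiors disjoint from the identification, and the same doubling computation as in condition (i) applied to $\partial^+ U$. The analogous identity for $Y^-$ and the fact that the two pieces union to $Y_K$ then give the required standard Heegaard splitting. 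Replace your appeal to Proposition~\ref{prop:gluediagram} with this direct computation and the argument goes through.
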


\begin{proof}
Since there is an orientation-reversing diffeomorphism  $f : \partial W \rightarrow \partial W'$ which takes  the open book $\OB$ to the open book $\OB'$, we have $p'=p$ and $b'=b$.   Let $W=W_1 \cup W_2 \cup W_3$ and $W'=W_1' \cup W_2'\cup W_3'$ be $(g, k;p,b)$- and $(g', k';p,b)$-relative trisections, respectively. Then, by 
Definition~\ref{def: rela},  there are  diffeomorphisms $\varphi_i : W_i \to  Z_{k} = \natural^k S^1 \times B^3$ and $\varphi'_i : W_i'  \to Z_{k'} = \natural^{k'} S^1 \times B^3$ for $i =1,2,3$. 

Let $B\subset \del W$ and $B' \subset \del W'$ be the bindings of $\OB $ and $\OB'$, where $\pi : \del W \setminus B \to S^1$ and $\pi' : \del W' \setminus B' \to S^1$ are the projection maps of these open books, respectively.  Since the gluing diffeomorphism $f: \del W \to \del W'$ takes $\OB$ to $\OB'$ by our assumption,   we have $\pi' (f(\pi^{-1}(t)))=t$, for all $t \in S^1.$  Moreover,  we may assume that 
$$f_i = \varphi_i' \circ f \circ \varphi_i^{-1} :  Y^0_{g, k;p,b} \to Y^0_{g', k';p,b}$$   is a diffeomorphism for each $i=1,2,3$.  Informally, we identify each third of $\OB$ on $\del W$  with the appropriate third of $\OB'$ on $\del W'$ via the gluing map $f$.  This allows us to define $$X_i = W_i \underset{f}{\cup} W_i' = W_i \cup W_i' \big/ \sim$$ where $ x \sim y$ if  $x \in  \varphi^{-1}_i(Y^0_{g, k;p,b}) \subset \del W_i$ and $y =f(x)  \in \del W_i' $, where $\varphi_i' (y) \in  Y^0_{g', k';p,b}$. 

We claim that $X= W \cup_f W' = X_1 \cup X_2 \cup X_3$ is a $(G,K)$-trisection, where 
$$G=g+ g' + b-1 \;   \mbox{and} \;  K = k + k' - (2p + b-1).$$  In order to prove our claim,  we first need to describe, for each $i=1,2,3$,  a diffeomorphism $\Phi_i  : X_i  \to Z_K = \natural^K S^1 \times B^3$.  The diffeomorphism $\Phi_i$ is essentially obtained by gluing the diffeomorphisms $\varphi_i : W_i \to  Z_{k}$ and $\varphi_i' : W_i' \to  Z_{k'}$ using the diffeomorphism $f_i: \del W_i \to \del W_i'$, as we describe below.  

To  construct the desired diffeomorphism $\Phi_i  : X_i  \to Z_K = \natural^K S^1 \times B^3$, it suffices to describe how to glue $Z_k$ with $Z_{k'}$ to obtain $Z_K$ by identifying  $Y^0_{g, k;p,b} \subset \del Z_k$  with  $Y^0_{g', k';p,b} \subset \del Z_{k'}$ using the gluing map $f_i$.  

By definition,  $Z_k = U \natural V_n$, and similarly $Z_{k'}= U' \natural V_{n'}$, where $U = P \times D$ and $U' = P' \times D$.  Note that $P$ is diffeomorphic to $P'$ via $f_i$. To glue $Z_k$  to $Z_{k'}$ we identify $Y^0_{g, k;p,b} \subset \del U$ with  $Y^0_{g', k';p,b} \subset \del U'$ via the diffeomorphism $f_i : Y^0_{g, k;p,b} \to Y^0_{g', k';p,b}$.  Next, we observe that 
by gluing $U$ and $U'$ along the aforementioned parts of their boundaries using $f_i$,  we get $\natural^{l} S^1 \times B^3$, where $l =2p+b-1$.  

To see this,  we view $ U = P \times D$ as $P \times I_1 \times I_2$, and similarly $ U' = P' \times D$ as $P' \times I_1 \times I_2$, where $I_1=I_2=[0,1]$.    
We glue  $P \times I_1 $ with $P' \times I_1$ and then take its product with $I_2$. To glue $P \times I_1$ with $P' \times I_1$ we identify  $P \times \{1\}$ with $P' \times \{1\}$ using $f_i$. The result of  this identification is diffeomorphic to $P \times [0,2] \cong \natural^{l} S^1 \times B^2$. However, to complete the identification dictated by $f_i$, we have to take the quotient of  $P \times [0,2]$ by the relation $(x,t) \sim (x, 2-t)$ for all $x \in \del P$.  Note that we suppressed $f_i$ here since we have already identified $P$ with $P'$ via $f_i$.   The result is still diffeomorphic to the handlebody $\natural^{l} S^1 \times B^2$. Therefore, the gluing of $U$ and $U'$ is diffeomorphic to  $\natural^{l} S^1 \times B^3 $, since it is a thickening of $\natural^{l} S^1 \times B^2$ by taking its product with $I_2$.  

As a consequence,  the result of gluing $Z_k$ to $Z_{k'}$ is diffeomorphic to $$(\natural^{l} S^1 \times B^3) \natural V_n \natural V_{n'} \cong \natural^{l+n+n'} S^1 \times B^3 \cong \natural^K S^1 \times B^3 \cong Z_K, $$  since $l+n+n' = 2p+b-1 + k-(2p+b-1)+k'-(2p+b-1)= k+k'-(2p+b-1)=K$.

To finish the proof of the lemma,  we need to show that taking indices mod  $3$,  $$\Phi_i (X_i \cap X_{i+1}) = Y^+_{G,K} \; \mbox{and}\; \Phi_i (X_i \cap X_{i-1}) = Y^-_{G,K}, $$ where  $Y^+_{G,K} \cup Y^-_{G,K}$ is  the standard genus $G$ Heegaard splitting of $ \#^KS^1\times S^2 = Y_K  = \del Z_K$.

We observe that $$\Phi_i (X_i \cap X_{i+1}) = \varphi_i (W_i \cap W_{i+1})  \underset{f_i}{\cup}  \varphi_i' (W_i' \cap W_{i+1}') =  \big((Y^+_{g,k;p,b} \cup Y^+_{g', k';p,b}) \big/ \sim\big) \subset Y_K$$ where $x\sim y$ if $x \in (\partial P \times \partial^+D) \cup (P \times \{e^{i\pi/3}\}) \subset Y^+_{g,k;p,b}$  and $ y =f_i(x) \in Y^+_{g', k';p,b}$. 

Note that by definition, $Y^+_{g, k;p,b} = \partial^+U \natural \partial^+_sV_n$ and similarly  $Y^+_{g', k';p,b}= \partial^+U' \natural \partial^+_{s'}V_{n'}$. Since the boundary connected sums are taken along the interior of Heegaard surfaces, the identification $\sim$ does not interact with $\partial^+_sV_n$ and $\partial^+_{s'}V_{n'}$ and hence 
$$ (Y^+_{g,k;p,b} \cup Y^+_{g', k';p,b}) \big/\sim \;   \cong \big((\partial^+U \cup \partial^+U' ) \big/\sim\big) \natural (\partial^+_{s}V_{n} \natural \partial^+_{s'}V_{n'}).$$

But we see  that $(\partial^+U \cup \partial^+U') \big/\sim$  is diffeomorphic to $\natural^{l} S^1 \times B^2$, by exactly the same argument used above when we discussed the gluing of $U$ with $U'$.  Therefore, we have
	\begin{align*}
	(Y^+_{g,k;p,b} \cup Y^+_{g', k';p,b}) \big/\sim & \cong \big((\partial^+U \cup \partial^+U') \big/ \sim\big) \natural (\partial^+_{s}V_{n} \natural \partial^+_{s'}V_{n'})\\
			& \cong (\natural^lS^1\times B^2) \natural (\partial^+_{s}V_{n} \natural \partial^+_{s'}V_{n'})\\
			&\cong (\natural^lS^1\times B^2)\natural^{s+n+s'+n'}S^1\times B^2\\
			&\cong \natural^G S^1\times B^2
\end{align*}
since $l+s+n+s'+n'= g+g'+b-1=G$. 
Similarly, we have    $$\Phi_i (X_i \cap X_{i-1}) =  \varphi_i (W_i \cap W_{i-1})  \underset{f_i}{\cup}  \varphi_i' (W_i' \cap W_{i-1}')= \big(Y^-_{g,k;p,b} \cup Y^-_{g', k';p,b}) \big/ \sim\big) \subset Y_K$$ where $x\sim y$ if $x \in (\partial P \times \partial^-D) \cup (P \times \{e^{-i\pi/3}\}) \subset Y^-_{g,k;p,b}$  and $ y =f_i(x) \in Y^-_{g', k';p,b}$. Thus we obtain $(Y^-_{g,k;p,b} \cup Y^-_{g', k';p,b}) \big/\sim \;  \cong \natural^GS^1\times B^2$. Moreover,   $$\big((Y^+_{g,k;p,b} \cup Y^+_{g', k';p,b}) \big/ \sim\big) \cup_\del  \big((Y^-_{g,k;p,b} \cup Y^-_{g', k';p,b}) \big/ \sim\big) = Y_K.$$  

Therefore,  $Y^+_{G,K} = (Y^+_{g,k;p,b} \cup Y^+_{g', k';p,b}) \big/ \sim$ and $Y^-_{G,K} = (Y^+_{g,k;p,b} \cup Y^+_{g', k';p,b}) \big/ \sim$ gives  the standard genus $G$ Heegaard splitting of $ \#^KS^1\times S^2 = Y_K$, as desired.

To summarize, we showed that there is a diffeomorphism $\Phi_i  : X_i  \to Z_K = \natural^K S^1 \times B^3$, for each $i=1,2,3$, and  moreover, taking indices mod $3$, $\Phi_i (X_i \cap X_{i+1}) = Y^+_{G,K}$ and $\Phi_i (X_i \cap X_{i-1}) = Y^-_{G,K}$. Therefore, we conclude that $X= W \cup_f W' = X_1 \cup X_2 \cup X_3$ is a $(G,K)$-trisection.  \end{proof}

Here is an immediate corollary of Lemma~\ref{lem:gluem}.

\begin{corollary} \label{cor: double} Suppose that $W=W_1 \cup W_2 \cup W_3$ is a $(g, k;p,b)$-relative trisection of a $4$-manifold $W$ with nonempty connected boundary. Let $DW$ denote the double of $W,$ obtained by gluing $W$ and $\overline{W}$ (meaning $W$ with the opposite orientation) by the identity map of  the boundary $\del W$. Then $DW$ admits a $(2g+b-1, 2k-2p-b+1)$-trisection. 
\end{corollary}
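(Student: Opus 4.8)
The plan is to deduce this as a direct application of Lemma~\ref{lem:gluem}. The double $DW = W \cup_f \overline{W}$ is formed by gluing $W$ to the orientation-reversed copy $\overline{W}$ along the identity map of $\partial W$. Since the identity map of $\partial W$ is (after the orientation reversal on the target copy) an orientation-reversing diffeomorphism $\partial W \to \partial \overline{W}$, and since it trivially carries the open book $\OB$ induced on $\partial W$ to the same open book $\OB$ on $\partial \overline{W}$, the hypotheses of Lemma~\ref{lem:gluem} are satisfied. Here the relative trisection on $\overline{W}$ is simply the given relative trisection $W = W_1 \cup W_2 \cup W_3$ viewed with reversed orientation, so it is again a $(g,k;p,b)$-relative trisection (the parameters $g,k,p,b$ are unchanged, as they are determined by diffeomorphism type of pieces and the surface $\Sigma$, not by orientation), inducing the open book $\OB$ on $\partial \overline W$.

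First I would record that the identity on $\partial W$ takes $\OB$ to $\OB$, so that $p' = p$ and $b' = b$ in the notation of the lemma, and likewise $g' = g$, $k' = k$. Then I would simply substitute these values into the conclusion of Lemma~\ref{lem:gluem}: the genus of the resulting trisection of the closed $4$-manifold $DW = W \cup_f \overline{W}$ is
\[
G = g + g' + b - 1 = 2g + b - 1,
\]
and the $1$-handlebody parameter is
\[
K = k + k' - (2p + b - 1) = 2k - 2p - b + 1.
\]
Hence $DW$ admits a $(2g + b - 1,\, 2k - 2p - b + 1)$-trisection, which is exactly the claim.

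There is essentially no obstacle here; the only point requiring a word of care is the verification that $\overline{W}$, with the orientation-reversed relative trisection, still induces the same open book $\OB$ on its boundary rather than some mirror of it, and that the identity map respects the decomposition of $\partial W$ into the three pieces $Y^0_{g,k;p,b}$ coming from the three sectors. Since reversing orientation does not change the underlying open book structure (the binding, pages, and monodromy as unoriented data are the same), and the identity map obviously sends the $i$-th boundary sector of $W$ to the $i$-th boundary sector of $\overline W$, Lemma~\ref{lem:gluem} applies verbatim with the maps $f_i$ all equal to the identity. This completes the proof.
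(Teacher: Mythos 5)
Your proposal is correct and follows essentially the same route as the paper: apply Lemma~\ref{lem:gluem} with $W' = \overline{W}$ and $f = \mathrm{id}$, then substitute $g'=g$, $k'=k$, $p'=p$, $b'=b$ to read off $G = 2g+b-1$ and $K = 2k-2p-b+1$. The only small imprecision is your claim that $\overline{W}$ induces ``the same open book $\OB$'' on its boundary; the paper is more careful here and writes $\overline{\OB}$ for the open book with reversed page orientation, noting that the identity map is an orientation-reversing diffeomorphism carrying $\OB$ to $\overline{\OB}$, which is exactly what Lemma~\ref{lem:gluem} requires.
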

\begin{proof} 
 If $W=W_1 \cup W_2 \cup W_3$ is a $(g, k;p,b)$-relative trisection of $W$ with the induced open book  $\OB$ on $\del W$, then  $\overline W=\overline W_1 \cup \overline W_2 \cup \overline W_3$ is a $(g, k;p,b)$-relative trisection of $\overline W$ with the induced open book  $\overline \OB$ on $\del \overline W$, where $\overline \OB$ is obtained from $\OB$ by  reversing the orientation of the pages. Since  the identity map from $\del W$ to $\del \overline W$  is an orientation-reversing diffeomorphism which takes $\OB$ to $\overline \OB$, we  obtain the desired result about $DW$ by  Lemma~\ref{lem:gluem}. \end{proof} 

The point of Corollary~\ref{cor: double} is that one does not need to know the monodromy of the open book on $\del W,$  to describe a trisection on $DW$.

\begin{example} \label{ex: eng} Let $E_{n,h}$ denote the $B^2$-bundle over $\Si_h$ with Euler number $n \in \mathbb{Z}$.   In \cite{cgp}, there is a description of a $(|n| + h, |n| + 2h-1; h, |n|)$-relative trisection of $E_{n,h}$  for $n \neq 0$, and a  $(h+2, 2h+1; h, 2)$-relative trisection of $E_{0,h}=\Si_h \times B^2 $.  Since the double of $E_{n,h}$ is $\Si_h \times S^2$ or $ \Si_h \tilde{\times} S^2$ depending on $n$ modulo $2$, we get a $(2h+3|n|-1, 2h+|n|-1)$-trisection of $\Si_h \times S^2$ (resp. $ \Si_h \tilde{\times} S^2$) for any even (resp. odd) nonzero integer $n$, by Corollary~\ref{cor: double}.  

In particular,  by doubling the  $(h+2, 2h+1; h, 2)$-relative trisection of $\Si_h \times B^2 $, we obtain a $(2h+5, 2h+1)$-trisection of $\Si_h \times S^2$ which is smaller compared to the $(8h+5, 4h+1)$-trisection presented in \cite{gk}, provided that $h \geq 1$.  Similarly, by setting $n=\pm1$, we obtain a  $(2h+2, 2h)$-trisection for $ \Si_h \tilde{\times} S^2$, which is not covered by the examples in \cite{gk}, except for $h=0$.  Note that there is also a $(2,1;0,2)$-relative trisection of $E_{n,0}$ given in \cite{cgp} for each $n \in \mathbb{Z}$.   Since the double of $E_{n,0}$ is $S^2 \times S^2$ or $S^2 \tilde{\times} S^2$ depending on $n$ modulo $2$, we get {\em infinitely many} $(5,1)$-trisections of $S^2 \times S^2$ and $S^2 \tilde{\times} S^2$. \end{example}

\begin{corollary} \label{cor: euler} If $W=W_1 \cup W_2 \cup W_3$ is a $(g, k;p,b)$-relative trisection of a $4$-manifold $W$ with nonempty connected boundary, then the Euler characteristic $\chi(W)$ is equal to $g-3k+3p+2b-1.$
\end{corollary}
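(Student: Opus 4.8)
The plan is to derive the Euler characteristic formula for a relative trisection directly from the decomposition $W = W_1 \cup W_2 \cup W_3$ using inclusion--exclusion for the Euler characteristic, exactly in the spirit of the parenthetical remark already made after Definition~\ref{def: rela}. First I would record the basic building blocks: each $W_i \cong Z_k = \natural^k S^1 \times B^3$ is homotopy equivalent to a wedge of $k$ circles, so $\chi(W_i) = 1-k$; the pairwise intersections $W_i \cap W_{i+1}$ are each identified (via $\varphi_i$) with $Y^+_{g,k;p,b}$, and the triple intersection $W_1\cap W_2 \cap W_3$ is the genus $g$ surface with $b$ boundary components, so $\chi(W_1 \cap W_2 \cap W_3) = 2-2g-b$. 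The only nonroutine input is computing $\chi(Y^\pm_{g,k;p,b})$.

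For that step I would use the description $Y^+_{g,k;p,b} = \partial^+ U \natural \partial^+_s V_n$ from the setup preceding Definition~\ref{def: rela}. Here $\partial^+ U = P \times \partial^+ D$ is $P \times I$, hence homotopy equivalent to $P$, giving $\chi(\partial^+ U) = \chi(P) = 2 - 2p - b$; and $\partial^+_s V_n$, being a handlebody summand of a stabilized Heegaard surface splitting, is a genus-$(n+s)$ handlebody, with $\chi = 1-(n+s)$. A boundary connected sum of two spaces $A \natural B$ satisfies $\chi(A\natural B) = \chi(A) + \chi(B) - 1$, so $\chi(Y^+_{g,k;p,b}) = (2-2p-b) + (1-(n+s)) - 1 = 2-2p-b-n-s$. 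Substituting $n = k-2p-b+1$ and $s = g-k+p+b-1$ gives $n+s = g-p$, hence $\chi(Y^+_{g,k;p,b}) = 2-2p-b-(g-p) = 2-p-b-g$; the same holds for $Y^-_{g,k;p,b}$ by symmetry. (As a sanity check, one could instead note that $Y^\pm_{g,k;p,b}$ is a compression body with one boundary component the genus-$g$ surface-with-$b$-holes $\Sigma$ and the other the page $P$, and $\chi$ of a compression body is the average of the $\chi$'s of its two boundary surfaces, recovering the same value; this is the step I expect to require the most care to state cleanly.)

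Finally I would assemble the pieces. By inclusion--exclusion,
\begin{align*}
\chi(W) &= \sum_{i=1}^3 \chi(W_i) - \sum_{i=1}^3 \chi(W_i \cap W_{i+1}) + \chi(W_1 \cap W_2 \cap W_3)\\
&= 3(1-k) - 3(2-p-b-g) + (2-2g-b)\\
&= 3 - 3k - 6 + 3p + 3b + 3g + 2 - 2g - b\\
&= g - 3k + 3p + 2b - 1,
\end{align*}
which is the claimed formula. The main obstacle is purely the bookkeeping of identifying the homotopy types of $W_i$, $W_i\cap W_{i+1} = Y^\pm_{g,k;p,b}$, and the triple intersection, together with a clean justification of the $\natural$-additivity of $\chi$ (equivalently, that $A \natural B \simeq A \vee B$ up to the correct Euler-characteristic correction); once those are in hand the computation is immediate and, reassuringly, independent of the choices in the construction since every quantity involved is a homotopy invariant.
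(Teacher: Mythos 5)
Your proof is correct, but it takes a genuinely different route from the paper's. The paper's proof is a one-line reduction: it invokes Corollary~\ref{cor: double} to form the double $DW$, which carries a $(2g+b-1,\,2k-2p-b+1)$-trisection, then applies the closed-manifold formula $\chi = 2 + G - 3K$ together with $\chi(DW) = 2\chi(W)$ (using $\chi(\partial W) = 0$). Your approach is the ``direct'' computation that the paper only gestures at in its closing sentence: you apply inclusion--exclusion to $W = W_1 \cup W_2 \cup W_3$, identifying the homotopy types and Euler characteristics of the pieces $W_i \cong \natural^k S^1 \times B^3$, the pairwise intersections $W_i \cap W_{i\pm 1} \cong Y^{\pm}_{g,k;p,b}$, and the triple intersection $\Sigma_{g,b}$. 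All the individual computations check out: $\chi(W_i) = 1 - k$, the boundary-connected-sum formula $\chi(A \natural B) = \chi(A) + \chi(B) - 1$ applied to $Y^+_{g,k;p,b} = \partial^+ U \natural \partial^+_s V_n$ correctly yields $\chi(Y^{\pm}_{g,k;p,b}) = 2 - g - p - b$ (using $n + s = g - p$), and the compression-body sanity check agrees. The arithmetic in the final inclusion--exclusion is also correct. What the paper's approach buys is brevity and a neat reuse of the earlier doubling result; what yours buys is self-containment (it does not depend on Corollary~\ref{cor: double} or on Lemma~\ref{lem:gluem}, which underlies it) and it makes visible exactly which homotopy types enter the count. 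Both are legitimate, and indeed the paper explicitly acknowledges your route as an alternative.
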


\begin{proof}    
Using Corollary~\ref{cor: double}, we compute $$\chi(W)=\frac{1}{2}\chi(DW)= \frac{1}{2} (2+ 2g+b-1 -3(2k-2p-b+1)))= g-3k+3p+2b-1. $$One can of course derive the same formula directly from the definition of a  relative trisection.   \end{proof}

Since every relatively trisected $4$-manifold with connected boundary is determined by some relative trisection diagram, it would be desirable to have a version of Lemma ~\ref{lem:gluem}, where one ``glues" the relative trisection diagrams corresponding to $W=W_1 \cup W_2 \cup W_3$ and $W'=W_1' \cup W_2' \cup W_3'$  to get a diagram corresponding to the trisection  $X= W \cup_f W' =X_1 \cup X_2 \cup X_3$.  This is the content of Proposition~\ref{prop:gluediagram}, but first we develop some language to be used in its statement.

Let  $(\Si, \a, \b, \g)$ and $(\Si', \a', \b', \g')$ be  $(g, k;p,b)$- and $(g', k';p,b)$-relative trisection diagrams corresponding to  the relative trisections $W= W_1 \cup W_2 \cup W_3$ and $W'=W_1' \cup W_2' \cup W_3'$, with induced open books $\OB$ and $\OB'$ on $\del W$ and $\del W'$, respectively.  Suppose that there is an orientation-reversing diffeomorphism $f:\partial W \rightarrow \partial W'$ which  takes $\OB$ to $\OB'$.  We observe that  since the orientation-reversing diffeomorphism $f : \del W \to \del W'$ takes $\OB$ to $\OB'$, it descends to an orientation-reversing  diffeomorphism, denoted again by $f$ for simplicity,  from  the page $\Si_\a$ of $\OB$ onto the page $\Si'_{\a'}$ of $\OB'$. Therefore,  $f$ restricted to $\del \Si_\a$ is an orientation-reversing diffeomorphism from the binding $\del \Si_\a$ of $\OB$ to the binding $\del \Si'_{\a'}$ of $\OB'$.

Let $(\A_\alpha, \A_\beta, \A_\gamma)$  denote a cut system of arcs  as in Definition~\ref{def: cutarc} associated to the diagram $ (\Sigma, \alpha, \beta, \gamma)$.  We denote the arcs in $\A_\a$ as $\{ a_1, \ldots, a_l\}$, the arcs in $\A_\b$ as $\{ b_1, \ldots, b_l\}$, and the arcs in $\A_\g$ as $\{ c_1, \ldots, c_l\}$, where $l= 2p+b-1$.  

We choose a cut system $(\A_{\a'}, \A_{\b'}, \A_{\g'})$ of arcs  associated to the diagram $ (\Sigma', \alpha', \beta', \gamma')$
as follows.  Since the collection $\A_\a$ of arcs cuts $\Si_{\a}$ into a disk, the collection $\A_{\a'} = \{ a'_1, \ldots, a'_l\} $ of arcs, where $a'_i = f(a_i)$ for $i=1, \ldots, l$,  cuts $\Si'_{\a'}$ into a disk as well.  Then we obtain $\A_{\b'}= \{ b'_1, \ldots ,  b'_l\}$ and $\A_{\g'} =  \{ c'_1, \ldots  ,  c'_l\}$ from $\A_{\a'}$ as in Definition~\ref{def: cutarc}.  In particular, we see that $\del {a'_i} = f(\del {a_i})$, $\; \del {b'_i} = f(\del {b_i})$  and $\del {c'_i} = f(\del {c_i})$ for  each $i = 1, \ldots, l$. 

\begin{definition} \label{def:star} Let  $\Sigma^* $ denote the closed, oriented  surface obtained by gluing  $\Sigma$ and $\Si'$ along their boundaries using the orientation-reversing diffeomorphism $f : \del \Si = \del \Si_\a \to \del \Si'_{\a'} =  \del \Si'$ defined above.  It follows that   $\overline{\a}_i = a_i \cup_\partial a'_i$, $\;\overline{\b}_i =b_i \cup_\partial b'_i$ and $\overline{\g}_i = c_i \cup_\partial c'_i$ are simple closed curves in $\Si^*$, for $i = 1, \ldots, l$. Then the collection of $G= g+g'+b-1$ disjoint, simple closed curves $\a^* = \{ \a_1, \ldots, \a_G\}  \subset \Si^*$ is defined as follows
$$\alpha^*_i:= \left\{\begin{array}{cl}
					\a_i  & 1\leq i\leq g-p \cr
					\overline{\a}_{i-g+ p}  & g-p+1\leq i\leq G-g'+p\cr
					\alpha_i' & G-g'+p+1 \leq i\leq G\\
	\end{array}\right.$$  
We write  $\a^* = \a \cup \overline{\a} \cup \a'$. The collection of curves $\b^* = \b \cup \overline{\b} \cup \b'$ and $\g^* = \g \cup \overline{\g} \cup \g'$ are defined similarly.  We say that $(\Sigma^*, \alpha^*, \beta^*, \gamma^*)$  is obtained by gluing $(\Si, \a, \b, \g)$ and $(\Si', \a', \b', \g')$ by the map $f$. \end{definition}

\begin{proposition}\label{prop:gluediagram}  Let   $W$ and $W'$ be $4$-manifolds such that $\partial W$ and $\partial W'$ are both nonempty and connected.  Suppose that $(\Sigma, \alpha, \beta, \gamma)$ and $(\Sigma', \alpha', \beta', \gamma')$ are   $(g, k;p,b)$- and $(g', k';p,b)$-relative trisection diagrams corresponding to  the relative trisections  $W= W_1 \cup W_2 \cup W_3$ and $W'=W_1' \cup W_2' \cup W_3'$, with induced open books $\OB$ and $\OB'$ on $\partial W$ and  $\partial W'$,  respectively.
Suppose further that there is an orientation-reversing diffeomorphism $f:\partial W \rightarrow \partial W'$ which  takes $\OB$ to $\OB'$.  Then $(\Sigma^*, \alpha^*, \beta^*, \gamma^*)$, which is obtained by gluing $(\Sigma, \alpha, \beta, \gamma)$ and $(\Sigma', \alpha', \beta', \gamma')$ by the map $f$ as in Definition~\ref{def:star},  is a $(G,K)$-trisection diagram corresponding to the trisection $X= W \cup_f W' =X_1 \cup X_2 \cup X_3$ described in Lemma~\ref{lem:gluem}, where $G=g+g'+b-1$ and $K= k+k' -(2p+b-1)$.   \end{proposition}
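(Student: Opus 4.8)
The plan is to verify that the glued $4$-tuple $(\Sigma^*, \alpha^*, \beta^*, \gamma^*)$ satisfies the three conditions of a $(G,K)$-trisection diagram, and — crucially — that the trisected $4$-manifold it determines (via the theorem of \cite{cgp} that every trisection diagram determines a unique trisected $4$-manifold up to diffeomorphism) is precisely the trisection $X = X_1 \cup X_2 \cup X_3$ produced in Lemma~\ref{lem:gluem}. Condition (i) is immediate from Definition~\ref{def:star}: $\Sigma^*$ is a closed genus $G$ surface since gluing a genus $g$ surface with $b$ boundary components to a genus $g'$ surface with $b$ boundary components along their boundaries yields a closed surface of genus $g+g'+b-1$. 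Condition (ii), that $\alpha^*$ (and likewise $\beta^*$, $\gamma^*$) is a non-separating collection of $G$ disjoint simple closed curves, requires counting: there are $g-p$ curves of $\alpha$, $g'-p$ curves of $\alpha'$, and $l = 2p+b-1$ of the $\overline{\alpha}_i$, for a total of $(g-p)+(g'-p)+(2p+b-1) = g+g'+b-1 = G$; disjointness follows since the $\alpha$ and $\alpha'$ curves are disjoint from the arcs $\A_\alpha$, $\A_{\alpha'}$ used to build the $\overline{\alpha}_i$, and the $\overline{\alpha}_i$ are disjoint from each other because the $a_i$ (resp. $a'_i$) are; non-separating-ness one argues from the fact that the $\overline{\alpha}_i$ are part of a cut system.

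The substantive work is condition (iii): each of the three pairs, say $(\Sigma^*, \alpha^*, \beta^*)$, must be diffeomorphism-and-handleslide equivalent to the standard genus $G$ Heegaard diagram of $\#^K S^1 \times S^2$. Here I would exploit the interpretation of a Heegaard diagram as describing a $3$-manifold, together with the fact that this $3$-manifold is exactly $X_1 \cap X_2$ from Lemma~\ref{lem:gluem}. Concretely: $(\Sigma, \alpha, \beta)$ represents (after surgery) the relative handlebody structure whose boundary open book is $\OB$, and the arcs $\A_\alpha$, $\A_\beta$ encode how it sits; in Lemma~\ref{lem:gluem} we showed $\Phi_i(X_i \cap X_{i+1}) = Y^+_{G,K}$, i.e. $X_1 \cap X_2 \cong \#^K S^1 \times S^2$ with its standard genus $G$ Heegaard splitting. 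The claim to prove is that performing surgery on $\Sigma^*$ along $\alpha^*$ and along $\beta^*$ recovers exactly this splitting. The key geometric input is that $\Sigma^*$ is the Heegaard surface obtained by gluing the page $\Sigma_\alpha$-based compression bodies of $W$ and $W'$ along $Y^0$; surgery along $\alpha^* = \alpha \cup \overline{\alpha} \cup \alpha'$ does the $\alpha$-compressions on the $\Sigma$ side, the $\alpha'$-compressions on the $\Sigma'$ side, and — this is the role of the $\overline{\alpha}_i = a_i \cup_\partial a'_i$ — the compressions that cap off the glued page $\Sigma_\alpha \cup_f \Sigma'_{\alpha'}$ (which is a closed surface since $f$ matches the bindings) back down to the handlebody. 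I would make this precise by checking that the cut system arcs $a_i, a'_i$ become, after $\alpha$- and $\alpha'$-surgery, a full system of compressing disks capping the closed doubled page, so that $\overline{\alpha}_i$ are exactly the right curves; and that the handleslide freedom in choosing $\A_{\beta'}$ from $\A_{\alpha'}$ (as in Definition~\ref{def: cutarc}, mirroring the choices on the $W$ side) is exactly what translates, under gluing, into handleslide equivalence of $(\Sigma^*, \alpha^*, \beta^*)$ with the standard diagram.

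The main obstacle I anticipate is bookkeeping the arcs and handleslides so that the gluing of diagrams is genuinely compatible with the gluing of manifolds in Lemma~\ref{lem:gluem} — in particular, showing that the specific choice $a'_i = f(a_i)$ and the induced choices of $\A_{\beta'}, \A_{\gamma'}$ are what make $\overline{\alpha}_i, \overline{\beta}_i, \overline{\gamma}_i$ close up to simple closed curves that sit correctly, rather than merely making the genus and Euler characteristic match up. I would organize the proof as: (1) verify (i) and (ii) by the counting/disjointness argument above; (2) observe that the cut system compatibility $\partial a'_i = f(\partial a_i)$, $\partial b'_i = f(\partial b_i)$, $\partial c'_i = f(\partial c_i)$ forces $\overline{\alpha}_i, \overline{\beta}_i, \overline{\gamma}_i$ to be embedded simple closed curves in $\Sigma^*$; (3) for one pair, say $(\Sigma^*, \alpha^*, \beta^*)$, identify the associated $3$-manifold-with-splitting with $X_1 \cap X_2$ by tracking surgeries through the decomposition $\Sigma^* = \Sigma \cup_f \Sigma'$ and matching with the $Y^+_{G,K} = (Y^+_{g,k;p,b} \cup Y^+_{g',k';p,b})/\!\sim$ computation of Lemma~\ref{lem:gluem}; (4) conclude by the uniqueness statement of \cite{cgp} that $(\Sigma^*, \alpha^*, \beta^*, \gamma^*)$ is a trisection diagram for $X = W \cup_f W'$ with the Lemma~\ref{lem:gluem} trisection, and that $G, K$ are as claimed by the arithmetic already done there.
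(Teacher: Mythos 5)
Your proposal is correct and follows essentially the same route as the paper: reduce to Lemma~\ref{lem:gluem}, count the curves to match $G$, and use the fact that the arcs $a_i, a'_i$ descend to a cut system of the glued page so that $\overline{\alpha}, \overline{\beta}, \overline{\gamma}$ bound compressing disks in the glued handlebodies $\partial^\pm U \cup \partial^\pm U'/\!\sim$. The paper phrases the final step slightly more directly — it shows $\alpha^*, \beta^*, \gamma^*$ bound disks in $X_1\cap X_2$, $X_2\cap X_3$, $X_1\cap X_3$ respectively, which together with Lemma~\ref{lem:gluem} already gives the standard Heegaard splittings — rather than separately re-deriving the diffeomorphism-and-handleslide equivalence to the standard diagram, but the geometric content is identical.
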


\begin{proof} We claim that $(\Sigma^*, \alpha^*, \beta^*, \gamma^*)$ is a $(G,K)$-trisection diagram representing the trisection $X= W \cup_f W' = X_1 \cup  X_2 \cup X_3$ described in Lemma~\ref{lem:gluem}. By construction, $\Si^*$ is a closed, oriented  surface of  genus $$ (g-p) + (2p+b-1) + (g'-p) = g+g'+b-1 =G$$ and each of $\a^*$, $\b^*$ and $\g^*$ is a non-separating collection of $G$ disjoint simple closed curves on $\Si^*$.   To finish the proof, we need to show that $\a^*$ bounds disks in $X_1 \cap X_2$,  $\b^*$ bounds disks in $X_2 \cap X_3$,  $\g^*$ bounds disks in $X_1 \cap X_3$.  

We know that $\a$ curves bound disks in $W_1 \cap W_2$,  $\b$ curves bound disks in $W_2 \cap W_3$,  and $\g$ curves bound disks in $W_1 \cap W_3$. Similarly,  $\a'$ curves bound disks in $W'_1 \cap W'_2$,  $\b'$ curves bound disks in $W'_2 \cap W'_3$,  and $\g'$ curves bound disks in $W'_1 \cap W'_3$.  Therefore $\a \cup \a'$ curves bound disks in $X_1 \cap X_2$,  $\b \cup \b'$ curves bound disks in $X_2 \cap X_3$,  and $\g \cup \g'$ curves bound disks in $X_1 \cap X_3$.  

Hence, all we need to show is that $\overline{\a}$ curves  bound disks in $X_1 \cap X_2$,  $\overline{\b}$ curves bound disks in $X_2 \cap X_3$,  and 
$\overline{\g}$ curves bound disks in $X_1 \cap X_3$. But this follows by the fact that  $\overline{\alpha} $ curves bound disks in the handlebody $\partial^+U \cup \partial^+U'/\sim$,  whereas $\overline{\beta} $ curves bound disks in the handlebody $\partial^-U \cup \partial^-U'/\sim$.  Similar statement holds for the pairs $(\overline{\b}, \overline{\g})$ and $(\overline{\a}, \overline{\g})$. \end{proof}


\begin{corollary} \label{cor: diag} Suppose that $W=W_1 \cup W_2 \cup W_3$ is a $(g, k;p,b)$-relative trisection of a $4$-manifold $W$  with nonempty connected boundary and let $(\Si, \a, \b, \g)$ be a corresponding relative trisection diagram.  Then the $(2g+b-1, 2k -2p-b+1)$-trisection of the double $DW$ of $W$ described in Corollary~\ref{cor: double} has a corresponding diagram $(\Si^*, \a^*, \beta^*, \g^*)$ which is obtained by gluing $(\Si, \a, \b, \g)$ and   $(\overline{\Si}, \a, \b, \g)$ by the identity map  from $\del W$ to $\del \overline W$.  \end{corollary}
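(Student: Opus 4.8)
The plan is to deduce this directly from Proposition~\ref{prop:gluediagram}, specialized to the situation of Corollary~\ref{cor: double} in which $W' = \overline{W}$ and $f$ is the identity map of $\partial W$. First I would record that a relative trisection diagram for $\overline{W}$ is obtained from one for $W$ by reversing only the orientation of the ambient surface: if $W = W_1 \cup W_2 \cup W_3$ is represented by $(\Si, \a, \b, \g)$, then $\overline{W} = \overline{W}_1 \cup \overline{W}_2 \cup \overline{W}_3$ is represented by $(\overline{\Si}, \a, \b, \g)$. Indeed, the diffeomorphisms $\varphi_i \colon W_i \to Z_k$ and the three Heegaard triples survive after post-composing with orientation-reversing self-diffeomorphisms of $Z_k$ and of $\#^k S^1 \times S^2$ (which exist), while the curves $\a, \b, \g$ bound the same compressing disks as before. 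Moreover, as already observed in the proof of Corollary~\ref{cor: double}, the open book induced on $\partial \overline{W}$ is $\overline{\OB}$, obtained from $\OB$ by reversing page orientations, and the identity map $\partial W \to \partial \overline{W}$ is an orientation-reversing diffeomorphism carrying $\OB$ to $\overline{\OB}$.

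Next I would trace the combinatorial data of Definition~\ref{def:star} through the case $f = \mathrm{id}$. The prescription $a'_i = f(a_i)$ then reads $a'_i = a_i$, and since the $\a, \b, \g$ curves on $\overline{\Si}$ coincide with those on $\Si$, the algorithm of Definition~\ref{def: cutarc} may be run so that the cut system $(\A_{\a'}, \A_{\b'}, \A_{\g'})$ for $(\overline{\Si}, \a, \b, \g)$ equals $(\A_\a, \A_\b, \A_\g)$. Consequently the glued surface $\Si^*$ is the double of $\Si$ along its boundary — that is, $\Si$ glued to $\overline{\Si}$ by the identity of $\partial\Si$ — and each doubled arc $\overline{\a}_i = a_i \cup_\partial a_i$ (and likewise $\overline{\b}_i, \overline{\g}_i$) is a simple closed curve in $\Si^*$. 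Hence the tuple $(\Si^*, \a^*, \b^*, \g^*)$ produced by Definition~\ref{def:star} is precisely the one ``obtained by gluing $(\Si, \a, \b, \g)$ and $(\overline{\Si}, \a, \b, \g)$ by the identity map from $\del W$ to $\del \overline{W}$,'' as claimed in the statement.

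Finally, Proposition~\ref{prop:gluediagram} asserts that this tuple is a $(G,K)$-trisection diagram for the trisection $X = W \cup_{\mathrm{id}} \overline{W} = DW$ of Lemma~\ref{lem:gluem}, with $G = g + g + b - 1 = 2g + b - 1$ and $K = k + k - (2p+b-1) = 2k - 2p - b + 1$; since Corollary~\ref{cor: double} identifies exactly this trisection on $DW$, the corollary follows. The one step needing a little care — and the place I expect the (mild) main obstacle — is the first one: verifying that passing to $\overline{W}$ leaves the diagram curves literally unchanged, flipping only the orientation of $\Si$, and that the induced open book becomes $\overline{\OB}$. Once that bookkeeping is settled, the remainder is a direct instantiation of Proposition~\ref{prop:gluediagram}.
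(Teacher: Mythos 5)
Your proposal is correct and takes essentially the same approach as the paper: identify $(\overline{\Si}, \a, \b, \g)$ as a relative trisection diagram for $\overline{W}$, note that the identity map $\del W \to \del \overline{W}$ is orientation-reversing and carries $\OB$ to $\overline{\OB}$, and then invoke Proposition~\ref{prop:gluediagram}. The paper's proof is more terse, leaving the tracing of Definition~\ref{def:star} with $f=\mathrm{id}$ and the choice of matching cut systems implicit, while you spell these out; this is a reasonable elaboration rather than a different route.
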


\begin{proof}  If $W=W_1 \cup W_2 \cup W_3$ is a $(g, k;p,b)$-relative trisection of $W$ with the induced open book  $\OB$ on $\del W$, whose corresponding relative trisection diagram is $(\Si, \a, \b, \g)$, then  $\overline W=\overline W_1 \cup \overline W_2 \cup \overline W_3$ is a $(g, k;p,b)$-relative trisection of $\overline W$ with the induced open book  $\overline \OB$ on $\del \overline W$, whose corresponding relative trisection diagram can be canonically given by  $(\overline \Si, \a, \b, \g)$.  The proof is complete by observing that  the identity map from $\del W$ to $\del \overline W$  is an orientation-reversing diffeomorphism  taking $\OB$ to $\overline \OB$.  \end{proof}

\begin{remark}  In Sections~\ref{subsec: trivial}  and \ref{subsec: twisted}, using Corollary~\ref{cor: diag}, we  draw explicit diagrams  for the $(7, 3)$-trisection of $T^2 \times S^2$  and the $(4, 2)$-trisection of  $T^2 \tilde{\times} S^2$ given in Example~\ref{ex: eng}, respectively.       \end{remark}

\section{Trisecting Lefschetz fibrations} \label{sec: main}
In this section, we  prove Theorem~\ref{thm: main}.  We refer to \cite{gs, os} for the definitions and properties of Lefschetz fibrations and  open books.   We need some preliminary results. The following lemma is well-known (cf. \cite{ao}). 

\begin{lemma} \label{lem: palf} Suppose that a closed $4$-manifold $X$ admits
a genus $p$ Lefschetz fibration $\pi_X :  X \to S^2$ with a section of square $-1$ so that the 
vanishing cycles of $\pi_X$ are given by the ordered set of simple closed curves  
$\{ \l_1, \l_2, \ldots, \l_n\}$. Let $V$ denote a regular neighborhood of the section union a nonsingular fiber and let $W$ 
denote the $4$-manifold with boundary
obtained from $X$ by removing the interior of $V$. Then $\pi_X: X \to S^2$ descends to a Lefschetz fibration $\pi_W: W \to B^2$ whose regular fiber is a genus $p$ surface $\Si_{p,1}$  with connected boundary. Moreover, the monodromy of $\pi_W$ (and hence the open book naturally induced on $\del W$) is given by 
$$D(\d) = D(\l_n) D(\l_{n-1}) \cdots D(\l_1)$$ where $\d$ is a boundary parallel curve in $\Si_{p,1}$.  
\end{lemma}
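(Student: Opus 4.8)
The plan is to obtain $\pi_W$ by restricting $\pi_X$ and then to identify its monodromy from two ingredients: the standard fact that a Lefschetz fibration over $B^2$ with bounded fibres induces on its boundary an open book whose monodromy is the ordered product of right-handed Dehn twists along the vanishing cycles, and the fact that the square $-1$ of the section pins down the resulting power of the boundary Dehn twist. First I would fix a regular value $q_0$ at which the section meets $F=\pi_X^{-1}(q_0)$, take a small disk $D_0\subset S^2$ about $q_0$ containing no critical value, and set $B^2=S^2\setminus\mathrm{int}(D_0)$, $X_0=\pi_X^{-1}(B^2)$. Then $\pi_X|_{X_0}\colon X_0\to B^2$ is a Lefschetz fibration with closed genus $p$ fibres, all $n$ singular fibres, vanishing cycles $\l_1,\dots,\l_n$, and monodromy $D(\l_n)D(\l_{n-1})\cdots D(\l_1)$ around $\del B^2$, which is isotopic to $\mathrm{id}_{\Si_p}$ because the fibration extends as a trivial bundle over $D_0$. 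The section restricts to a disk-section $\sigma(B^2)\subset X_0$ with trivial normal bundle, so a tubular neighbourhood is $\nu(\sigma(B^2))\cong B^2\times D^2$ meeting each fibre in a disk; since $\sigma(S^2)=\sigma(D_0)\cup\sigma(B^2)$ with $\nu(\sigma(D_0))\subset\pi_X^{-1}(D_0)$, after smoothing corners $V=\nu(F\cup\sigma(S^2))\cong\pi_X^{-1}(D_0)\cup\nu(\sigma(B^2))$, and hence $W=X\setminus\mathrm{int}(V)=X_0\setminus\mathrm{int}(\nu(\sigma(B^2)))$.

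Deleting $B^2\times D^2$ from $X_0$ removes a disk from every fibre and leaves the Lefschetz critical points untouched, so $\pi_X$ restricts to a Lefschetz fibration $\pi_W\colon W\to B^2$ with regular fibre $\Si_{p,1}$ and vanishing cycles $\l_1,\dots,\l_n$, now regarded as simple closed curves in $\Si_{p,1}$ disjoint from the deleted disk. By the standard correspondence between bounded Lefschetz fibrations over $B^2$ and open books (cf.\ \cite{ao}), the natural open book induced on $\del W$ has page $\Si_{p,1}$ and monodromy $\mu=D(\l_n)D(\l_{n-1})\cdots D(\l_1)$, taken in the mapping class group of $\Si_{p,1}$ fixing the boundary pointwise. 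This already gives the second equality in the statement, so it remains to prove $\mu=D(\d)$.

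To identify $\mu$ I would cap the boundary of the page: under $\mathrm{Mod}(\Si_{p,1},\del)\to\mathrm{Mod}(\Si_p)$ the class $\mu$ goes to $D(\l_n)\cdots D(\l_1)=\mathrm{id}$, and since $\mu$ is the monodromy of a fibration arising from a closed Lefschetz fibration together with a section, its only freedom relative to this closed relation is a multiple of the boundary twist, so $\mu=D(\d)^m$ for some $m\in\Z$. The integer $m$ is then determined by the self-intersection of the section: regluing $V$ recovers $\sigma(S^2)$ as $\sigma(B^2)\cup_\del\sigma(D_0)$, so $[\sigma(S^2)]^2$ equals the relative Euler number of the normal bundle, that is, the discrepancy between the fibrewise product framing of $\sigma(B^2)$ over $B^2$ and that of $\sigma(D_0)$ over $D_0$; this discrepancy is exactly the power of $D(\d)$ carried by $\mu$, so $-1=[\sigma(S^2)]^2=-m$, whence $\mu=D(\d)$ and $D(\d)=D(\l_n)\cdots D(\l_1)$.

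The main obstacle is this last step: verifying both that the ambiguity of $\mu$ over the closed-surface relation is precisely a power of $D(\d)$ (rather than, a priori, something involving point-pushing in $\ker\big(\mathrm{Mod}(\Si_{p,1},\del)\to\mathrm{Mod}(\Si_p)\big)$) and that the exponent equals $-[\sigma(S^2)]^2=1$. This is the only place where the hypothesis on the square of the section enters, and it is cleanest to settle it by the relative Euler class / handle-diagram description of a neighbourhood of a fibre union a $(-1)$-section, which can be extracted from \cite{ao}; the remaining parts of the argument are just a careful assembly of the neighbourhood $V$.
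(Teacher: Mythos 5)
The paper does not actually prove this lemma; it is stated as ``well-known (cf.\ \cite{ao})'' with no argument supplied, so there is no internal proof to compare against. Your reconstruction of the standard argument is sound, and the geometric assembly of $V$, $W$, and the restricted fibration in your first two paragraphs is accurate.

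The gap you flag yourself --- that a lift of the trivial monodromy from $\mathrm{Mod}(\Sigma_p)$ to $\mathrm{Mod}(\Sigma_{p,1},\partial)$ could a priori carry a point-pushing component --- is genuine as written, but it has a clean fix that does not require falling back on Kirby-diagram computations from \cite{ao}. The point you are missing is that the section over $D_0$ trivializes the \emph{pair} $\big(\pi_X^{-1}(D_0),\,\sigma(D_0)\big)$: since $D_0$ contains no critical values, $\pi_X^{-1}(D_0)\cong \Sigma_p\times D_0$ and one may arrange $\sigma(D_0)=\{*\}\times D_0$. Therefore the monodromy around $\partial B^2=-\partial D_0$ is isotopic to the identity through diffeomorphisms fixing the marked point $*=\sigma(q_0)\in F$, i.e.\ $D(\l_n)\cdots D(\l_1)=1$ already in $\mathrm{Mod}(\Sigma_p,*)$ and not merely in $\mathrm{Mod}(\Sigma_p)$. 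This is exactly what kills the point-pushing ambiguity: the kernel of the capping homomorphism $\mathrm{Mod}(\Sigma_{p,1},\partial)\to\mathrm{Mod}(\Sigma_p,*)$ is the central infinite cyclic subgroup generated by $D(\d)$, so $\mu=D(\d)^m$ with no further freedom. Your framing-discrepancy argument then correctly pins down $m$: the Euler number of the normal bundle of $\sigma(S^2)$ is the difference between the product framing of $\sigma(B^2)$ (coming from $\nu(\sigma(B^2))\cong B^2\times D^2$) and the product framing of $\sigma(D_0)$, and this integer is precisely the power of the boundary twist. With the usual conventions a right-handed $D(\d)$ corresponds to self-intersection $-1$, giving $m=1$; the sign is convention-dependent, so a careful reader should verify it agrees with the convention used for Lefschetz monodromies, but the conclusion $\mu=D(\d)$ is correct.
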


\begin{lemma} \label{lem: plum}  Let $V$ denote a regular neighborhood of the section of square $-1$ union a nonsingular fiber as in Lemma~\ref{lem: palf}. Then there is an achiral Lefschetz fibration $\pi_V:  V \to B^2$ whose regular fiber is a surface $\Si_{p,1}$  of genus $p$ with connected boundary such that $\pi_V$ has only one singular fiber carrying two singularities. Moreover,  the monodromy of the open book on $\del V$ induced by $\pi_V$ is a single left-handed Dehn twist along a boundary parallel curve in $\Si_{p,1}$. 
\end{lemma}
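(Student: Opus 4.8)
The plan is to realize $V$ explicitly as a handlebody built on top of the section sphere, and then exhibit an achiral Lefschetz fibration on it by producing, via Kirby calculus, a handle decomposition that visibly matches that of a Lefschetz fibration with fiber $\Si_{p,1}$. Recall that $V$ is a regular neighborhood of a $(-1)$-sphere section $S$ union a regular fiber $F \cong \Si_p$; the two intersect transversely in one point, so $V$ is a plumbing (a disk bundle over $S^2$ of Euler number $-1$ plumbed with a disk bundle over $\Si_p$ of Euler number $+1$, the latter accounting for the normal framing of the fiber which is trivial, but the self-intersection bookkeeping is absorbed into the section). The first step is therefore to write down this plumbing as a Kirby diagram: a $0$-handle, $2p$ one-handles coming from the fiber $\Si_p$, and two $2$-handles, one unknot with framing $-1$ (the section) and one $0$-framed circle linking the one-handle pairs in the standard genus-$p$ way (the fiber), suitably Hopf-linked.

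Next I would perform a handle slide / blow-down-free rearrangement so that the diagram is presented as $\Si_{p,1} \times B^2$ with a single extra $2$-handle. Concretely: removing a neighborhood of $S$ from $F$ turns the closed fiber $\Si_p$ into $\Si_{p,1}$, and $\Si_{p,1} \times B^2 = \natural^{2p}(S^1\times B^3)$ is precisely the total space of the trivial Lefschetz fibration over $B^2$ with fiber $\Si_{p,1}$ and no singular fibers. One then checks that $V$ is obtained from $\Si_{p,1}\times B^2$ by attaching a single $2$-handle along a section of the trivial fibration over $\partial B^2$, with framing $-1$ relative to the page framing. A $2$-handle attached along a (braided) section with page framing $-1$ is exactly the datum of one \emph{negative} (achiral) Lefschetz singularity whose vanishing cycle is a boundary-parallel curve $\delta$ in $\Si_{p,1}$, since a Dehn twist $D(\delta)$ along a boundary-parallel curve has trivial geometric monodromy on the interior but records a $\pm 1$ boundary twist. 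Orientation bookkeeping forces the sign to be negative here because $S^2 = S$ has square $-1$ rather than $+1$; this is the step where I must be careful with conventions, and it is the crux of getting a \emph{left}-handed Dehn twist rather than a right-handed one.

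Granting the identification of $\pi_V$ as the achiral Lefschetz fibration on $\Si_{p,1}\times B^2$ plus one negative vanishing cycle equal to $\delta$, the statement about the induced open book on $\partial V$ is then immediate from the standard dictionary between (achiral) Lefschetz fibrations over $B^2$ and open books on the boundary: the page is the regular fiber $\Si_{p,1}$, and the monodromy is the product of the Dehn twists along the vanishing cycles, with a left-handed twist for each negative singularity. Since there is exactly one singular fiber and its vanishing cycle is the boundary-parallel curve, the monodromy of the open book on $\partial V$ is the single left-handed Dehn twist $D(\delta)^{-1}$ along a boundary-parallel curve in $\Si_{p,1}$, as claimed. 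That the two singular points can be taken to lie on a single fiber is arranged by a small isotopy of the fibration, or alternatively one observes only one singularity is strictly needed for $\pi_V$ itself — I would state it in the form actually used in Lemma~\ref{lem: palf}'s companion construction, namely one singular fiber with the vanishing cycle $\delta$, matching the $D(\delta)$ appearing there.

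The main obstacle I anticipate is the sign/orientation verification in the middle paragraph: pinning down that the $(-1)$-framing on the section forces the Lefschetz singularity to be \emph{negative}, hence the boundary twist to be \emph{left}-handed. This requires comparing the page framing of a braided section against the Seifert framing and tracking how the self-intersection number of the section transfers through the plumbing, which is exactly the kind of computation that is easy to get off by a sign. Everything else — the Kirby diagram of the plumbing, the recognition of $\Si_{p,1}\times B^2$, and the Lefschetz-fibration-to-open-book dictionary — is standard (cf.\ \cite{ao, gs}) and I would cite it rather than reprove it.
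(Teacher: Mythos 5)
Your proof has a genuine gap: the claim that $V$ is $\Si_{p,1}\times B^2$ with a \emph{single} extra $2$-handle, giving $\pi_V$ only one singular point. The lemma itself asserts one singular fiber carrying \emph{two} singularities, and an Euler characteristic count settles the question. The plumbing $V$ deformation retracts onto $S\cup F$, a wedge of $S^2$ and $\Si_p$, so $\chi(V)=2+(2-2p)-1=3-2p$; an achiral Lefschetz fibration over $B^2$ with fiber $\Si_{p,1}$ and $n$ vanishing cycles has $\chi=(1-2p)+n$, forcing $n=2$. Equivalently, your own Kirby diagram of $V$ has a $0$-handle, $2p$ one-handles, and \emph{two} $2$-handles, while $\Si_{p,1}\times B^2=\natural^{2p}(S^1\times B^3)$ accounts only for the $0$-handle and the one-handles; neither remaining $2$-handle can be eliminated — the fiber-capping $2$-handle runs over every $1$-handle geometrically twice (so no $1$-$2$ cancellation), and removing the $(-1)$-framed section $2$-handle is exactly a blow-down, which you disallowed. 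What you have actually built is the Lefschetz fibration on $\widetilde{V}$, the blow-down of $V$, which is Remark~\ref{blowd} of the paper: one singular fiber with vanishing cycle the boundary-parallel $\d$. To recover $V$ you must blow up, and blowing up a Lefschetz fibration inserts an additional vanishing cycle — a homotopically trivial simple closed curve $\e$ with page framing $-1$ (a positive singularity, contributing $D(\e)\simeq\mathrm{id}$). That second vanishing cycle $\e$, entirely absent from your argument, is why $\pi_V$ has two singularities and fails to be relatively minimal, and your closing suggestion that ``only one singularity is strictly needed for $\pi_V$ itself'' is false.

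Two smaller points. Your framing convention is reversed: page framing $-1$ gives a \emph{positive} Lefschetz singularity and a right-handed Dehn twist; a negative (achiral) singularity with left-handed monodromy $D^{-1}(\d)$ corresponds to page framing $+1$, so $\d$ must carry framing $+1$, not $-1$. Also, the parenthetical that the disk bundle over $\Si_p$ has Euler number $+1$ is off — the regular fiber has trivial normal bundle, so that plumbing factor is $B^2\times\Si_p$, as the paper states. By contrast, the paper's proof is essentially a citation to the algorithm of \cite[Section~5.3]{cgp}, which converts the plumbing data of $V$ directly into an achiral Lefschetz fibration with precisely the two vanishing cycles $\e$ (framing $-1$) and $\d$ (framing $+1$), with no need for the Kirby rearrangement you attempt.
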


\begin{proof} Let $\Si_p$ denote the regular fiber of the Lefschetz fibration $\pi_X : X \to S^2$ constructed in Lemma~\ref{lem: palf}. Then the 
$4$-manifold $V$ can be described as the  plumbing of the disk bundle over $S^2$ with Euler number $-1$ with the trivial disk bundle $B^2 \times \Si_p$. Applying the algorithm in \cite[Section 5.3]{cgp}, we obtain an achiral  Lefschetz fibration $\pi_V : V \to B^2$ whose regular fiber is a surface $\Si_{p,1}$ of genus $p$ with connected boundary. Moreover, $\pi_V$ has only one singular fiber carrying two vanishing cycles: a homotopically trivial curve $\e$ (so $\pi_V$ is not relatively minimal) with framing $-1$, and a boundary parallel curve $\d$ in $\Si_{p,1}$ with framing $+1$. It follows that the monodromy of  the open book on $\del V$ is a single left-handed Dehn twist 
$D^{-1}(\d)$.  \end{proof} 

\begin{remark} \label{blowd} Let $\widetilde{V}$ denote the $4$-manifold obtained from $V$ by blowing down the $(-1)$-sphere. Then there is an achiral Lefschetz fibration $\pi_{\widetilde{V}} : \widetilde{V} \to B^2$ whose regular fiber is $\Si_{p,1}$, and which contains only one singular fiber whose vanishing cycle is $\delta$. This can be most easily seen by drawing a Kirby diagram of $V$ and simply blowing down the sphere with framing $-1$.   
\end{remark}

\begin{lemma} \label{lem: leftri} \cite[Corollary 17]{cgp} Let $\pi_W : W  \to B^2$  be an achiral Lefschetz fibration with regular fiber a surface $\Si_{p,b}$  of genus $p$ with $b$ boundary components and with $n$
vanishing cycles. Then there is a $(p+n, 2p+b-1; p, b)$-relative trisection of $W$ realizing the natural open book on $\del W$ induced from the Lefschetz fibration $\pi_W$. Moreover, the corresponding trisection diagram can be described explicitly, based on the vanishing cycles of $\pi_W$.   
\end{lemma}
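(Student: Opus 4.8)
This is \cite[Corollary 17]{cgp}; we indicate the argument. The plan is to induct on the number $n$ of vanishing cycles: start from the trivial fibration, attach one $2$-handle at a time, and track the effect on a relative trisection diagram at each stage.

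First recall that $\pi_W : W \to B^2$ has a handle decomposition in which $W$ is obtained from $N := \Si_{p,b} \times B^2$ by attaching, for $i = 1, \dots, n$, a $2$-handle $h_i$ along a copy of the vanishing cycle $c_i$ pushed into the fiber over the $i$-th of $n$ points placed cyclically on $\del B^2$, with framing $\mp 1$ relative to the fiber framing (the sign is immaterial in the achiral category). Put $W_j := N \cup h_1 \cup \cdots \cup h_j$, so $W_0 = N$ and $W_n = W$; each $W_j$ is itself an achiral Lefschetz fibration over $B^2$ with vanishing cycles $c_1, \dots, c_j$, and the open book it induces on $\del W_j$ has page $\Si_{p,b}$ and monodromy $D^{\mp 1}(c_j) \cdots D^{\mp 1}(c_1)$. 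For the base case $W_0 = \Si_{p,b} \times B^2 = P \times D$, we take the trivial relative trisection obtained by cutting the model disk $D$ into its three sectors; it has type $(p, 2p+b-1; p, b)$, central surface $P \times \{\mathrm{pt}\}$, realizes the trivial-monodromy open book on $\del W_0$, and is represented by the diagram $(\Si_{p,b}, \emptyset, \emptyset, \emptyset)$ (here $g - p = 0$, and the empty collections on a genus $p$ surface with $b$ boundary components are exactly the standard relative diagram of Figure~\ref{fig:StandardPosition}).

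For the inductive step, suppose $(\Si, \a, \b, \g)$ is a $(p+j-1,\, 2p+b-1;\, p, b)$-relative trisection diagram representing $W_{j-1}$ with $\Si_\a = \Si_{p,b}$ the page and realizing the correct open book. Isotope $c_j$ into $\Si$ off $\a$, attach a tube (one-handle) $T_j$ to $\Si$ with both feet in a disk meeting $c_j$ in a single arc --- so the new surface $\Si'$ has genus $p+j$ and still $b$ boundary components --- and set $\a' = \a \cup \{a_j\}$ with $a_j$ a belt circle of $T_j$, $\b' = \b \cup \{b_j\}$ with $b_j$ running once over $T_j$ and closing up along $c_j$, and $\g' = \g \cup \{g_j\}$ with $g_j$ running once over $T_j$ and completing $\{a_j, b_j, g_j\}$ to a standard genus-one configuration supported near $T_j$. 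Then $\Si'_{\a'} = \Si_\a = \Si_{p,b}$, so the page is unchanged; the genus grows by one to $p+j$ while $k = 2p+b-1$, $p$, $b$ are unchanged, and $g+p+b-1 \ge k \ge 2p+b-1$ holds throughout; and, since $a_j, b_j, g_j$ are drawn directly from $c_j$, the diagram is explicit in the vanishing cycles.

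It remains to verify three things, and this is where the work lies. (a) Each of $(\Si', \a', \b')$, $(\Si', \b', \g')$, $(\Si', \a', \g')$ is again diffeomorphism-and-handleslide equivalent to the standard relative diagram: after handleslides of $b_j$ (resp.\ $g_j$) over the old curves and an isotopy of the feet of $T_j$, the curve $c_j$ can be slid into standard position inside each of the three relevant handlebodies-with-boundary, and it is here that the cyclic ordering of the $c_i$ must be respected so that all three pairings stay standard simultaneously. (b) The trisected $4$-manifold determined by $(\Si', \a', \b', \g')$ is $W_{j-1} \cup h_j = W_j$: extracting a handle decomposition from the new trisection, following Gay and Kirby but adapted to the relative setting, returns the old decomposition together with one extra $2$-handle attached along $c_j$ with framing $\mp 1$. (c) Running the monodromy algorithm recalled above --- extending the cut system of arcs (Definition~\ref{def: cutarc}) on $\Si$ by a single co-core arc of $T_j$ --- shows the induced open book on $\del W_j$ is the old one post-composed with exactly one Dehn twist $D^{\mp 1}(c_j)$, as required. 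The main obstacle is the combination of (a) and (b): one must see that banding a single vanishing cycle onto a fresh genus-one summand is simultaneously a move keeping all three Heegaard triples standard and, at the level of the trisected $4$-manifold, precisely the attachment of one $2$-handle along $c_j$. Granting this, (c) is a direct computation, the induction closes, and $W = W_n$ carries the asserted $(p+n,\, 2p+b-1;\, p, b)$-relative trisection together with its explicit diagram.
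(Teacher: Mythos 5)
Your proposal follows the same route as the paper's own sketch: begin with the trivial $(p,2p+b-1;p,b)$-relative trisection of $\Si_{p,b}\times B^2$ with the empty diagram, attach the Lefschetz $2$-handles one vanishing cycle at a time, and extend the relative trisection over each attachment by a local stabilization of the central surface near the vanishing cycle, all while deferring the detailed verifications to \cite[Corollary~17 and Lemma~15]{cgp}. The one place your sketch and the paper's diverge is in the phrasing of the local diagrammatic move: the paper removes an annular neighborhood of $\l$ from $\Si$ and glues in the two-holed torus of Figure~\ref{fig:lef} carrying its three prescribed curves, whereas you attach a tube $T_j$ near a single arc of $c_j$ and declare $a_j$ a belt circle, $b_j$ the curve running once over $T_j$ and closing along $c_j$, and $g_j$ a local third curve. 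These describe the same stabilization, but your description of $b_j$ quietly requires $c_j$ to have been isotoped off all previously added $\b$ curves (not just off $\a$, as you state), else $b_j$ meets the old $\b$ system; and when $c_j$ necessarily crosses old $\g$ curves, those must be cut and rerouted through the new handle. The paper makes exactly these points explicit in Remark~\ref{rem: order}, and you gesture at them in item (a) without spelling them out. Apart from that omission of bookkeeping, the argument is the paper's argument.
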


Here we briefly sketch a proof of Lemma~\ref{lem: leftri}.   We start with describing a relative trisection of the neighborhood  $\Si_{p,b} \times B^2$ of a nonsingular fiber $\Si_{p,b}$ in $\pi_W$.  Let $\pi: \Si_{p,b} \times B^2 \to  B^2$ denote the projection onto the second factor. Trisecting  the base $B^2$ into three wedges and taking the union of the inverse images of these pieces under $\pi$ gives a  $(p, 2p+b-1; p,b)$-relative trisection of $\Si_{p,b} \times B^2$, such that the open book on 
 $ \del ( \Si_{p,b} \times B^2 )$ is the trivial  one, whose page is $\Si_{p,b}$ and monodromy is the identity map. Note that the open book on  $ \del ( \Si_{p,b} \times B^2 )$   induced by the trivial fibration  $\Si_{p,b} \times B^2 \to B^2$ is the same as the one induced by the relative trisection.  Moreover, the corresponding relative trisection diagram is empty, i.e.,  it is a genus $p$ surface with $b$ boundary components with no $\a$, $\b$ or $\g$ curves on it.

It is well-known that the total space $W$ of the achiral Lefschetz fibration $\pi_W : W  \to B^2$ is obtained by attaching a $2$-handle to the product $\Si_{p,b} \times B^2$ for each vanishing cycle.  The $2$-handle is attached along the vanishing cycle with framing $\pm 1$ with respect to the surface framing. Therefore,  to find a  description of  a relative trisection of $W,$  it suffices  to extend  the relative trisection on  $\Si_{p,b} \times B^2$ over any $2$-handle attachment as described above.   It turns out that such an extension is possible  in a more general setting (cf. \cite[Lemma 15]{cgp}), and we describe its diagrammatic version below. 

Suppose that  $(\Si, \a, \b, \g)$ is a relative trisection diagram of a $4$-manifold $N$ which admits an achiral Lefschetz fibration $\pi_N : N \to B^2$ such that the open books induced by the relative trisection and the Lefschetz fibration agree on $\del N$. Let $\l$ be a simple closed curve on $\Si$ which is disjoint from $\a$ and transverse to  $\b$ and $\g$.  Hence, we can view $\l$ as a curve on $\Si_\a$, which we identified as the page of the aforementioned open book on $\del N$.  If we attach a $2$-handle to $N$ along $\l$ with framing $\pm 1$ with respect to the page $\Si_\a$, then a relative trisection diagram $(\widetilde{\Si}, \widetilde{\a}, \widetilde{\b}, \widetilde{\g}^\pm)$ of the resulting $4$-manifold is described as follows.

\begin{figure}[ht]
\includegraphics[scale=.35]{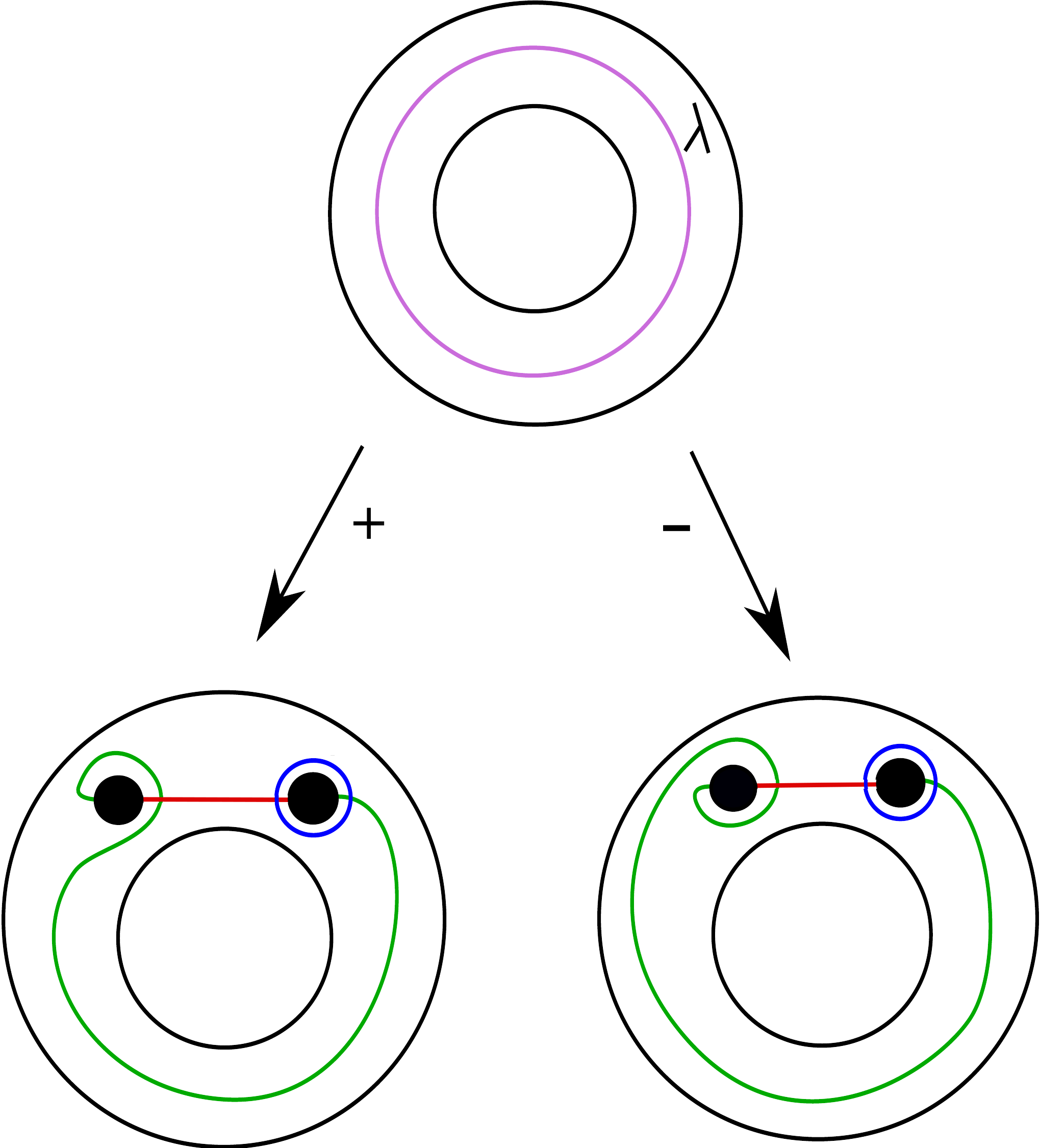}
	\caption{On top: annular neighborhood of the vanishing cycle $\l \subset \Si$. At the bottom:  removing each  pair of black disks and gluing in  a cylinder we get two copies of a two-holed torus --- one on the left and one on the right. The red and green arcs are extended over the cylinders as usual, so that both are simple closed curves on the surface $\widetilde{\Si}$.  ~\label{fig:lef}}
\end{figure}

The surface  $\widetilde{\Si}$ is obtained from $\Si$ by  removing  an annular neighborhood of $\l$ from $\Si$ and inserting  in  a two-holed torus. The two-holed torus carries three curves (colored red, blue and green) with two options for the green curve corresponding to the attaching framing of the $2$-handle,  as shown in Figure~\ref{fig:lef}.  We define $ \widetilde{\a}$ to be  $\a$ union  the new red curve, $ \widetilde{\b}$ to be  $\b$ union  the new blue curve, and $ \widetilde{\g}^\pm$ to be  $\g$ union the new green curve on the bottom left (resp. right) in Figure~\ref{fig:lef}. 

\begin{remark} \label{rem: order} In order to draw the relative trisection diagram of  an achiral  Lefschetz fibration over $B^2$ described by the {\em ordered} set of vanishing cycles $\l_1, \ldots, \l_n \subset \Si_{p,b}$,  we proceed as follows. First we draw $\l_1$ on the surface $\Si_{p,b}$ and replace an annular  neighborhood of it by one of the two-holed tori shown at the bottom of Figure~\ref{fig:lef} depending on the sign of the surgery coefficient, to obtain a surface $\Si_{p+1,b}$ decorated with one red, one blue and one green curve. Then we isotope $\l_2$  so that it does not intersect the red or the blue curve but only intersects the green curve transversely, say $m \geq 0$ times,  in the diagram. Next,  we remove an annular neighborhood of $\l_2$, and therefore  we also remove $m$  disjoint arcs of the green curve. Now we plug in  the appropriate  two-holed torus shown in Figure~\ref{fig:lef}, so that each arc cut from the first green curve is replaced by a green arc in the glued in two-holed torus with the same end points,  disjoint from both the new blue and the green curves and transversely intersecting the new red curve once.  As a result, we obtain a surface $\Si_{p+2,b}$ decorated with two red curves, two blue curves and two green curves,  so that two curves with the same color are disjoint. Then we isotope $\l_3$  such  that it does not intersect the red or the blue curves but only intersects the green curves transversely  and  apply the same procedure as we implemented  for $\l_2$. It is now clear how to iterate this procedure for the rest of the vanishing cycles. It is important to note that for a different ordering of the same set of vanishing cycles we get a different relative trisection  diagram, in general. This is of course consistent with the fact that the total space of the Lefschetz fibration (and therefore the open book on the boundary) is determined by the {\em ordered}  set of vanishing cycles.  \end{remark}

\begin{remark}  The Euler characteristic $\chi(W)$ can be calculated simply as $\chi(B^2)\chi (\Si_{p,b}) +n=2-2p-b +n$, using the achiral Lefschetz fibration $\pi_W : W  \to B^2$ described in Lemma~\ref{lem: leftri}.  On the other hand, using the $(p+n, 2p+b-1; p, b)$-relative trisection of $W$, we have $\chi(W) = p+n - 3(2p+b-1) + 3p + 2b -1= 2-2p-b+n$, as expected, by the formula in Corollary~\ref{cor: euler}. 

\end{remark}

\begin{theorem} \label{thm: main} Suppose that $X$ is a  smooth, closed, oriented, connected $4$-manifold  which admits a genus $p$ Lefschetz fibration over $S^2$ with $n$ singular fibers, equipped with a section of square $-1$. Then, an explicit  $(2p+n+2, 2p)$-trisection of $X$ can be described  by a  corresponding trisection diagram,  which is determined by the vanishing cycles of the Lefschetz fibration.      Moreover, if $\widetilde{X}$ denotes the $4$-manifold obtained from $X$ by blowing down the section of square $-1$, then we also obtain a $(2p+n+1, 2p)$-trisection of $\widetilde{X}$ along with a corresponding diagram. \end{theorem}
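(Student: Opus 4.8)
The plan is to realize $X$ as a gluing of two relatively trisected pieces and then apply Proposition~\ref{prop:gluediagram}. Write $X = W \cup V$, where $V$ is a regular neighborhood of the section of square $-1$ union a regular fiber and $W = X \setminus \mathrm{Int}(V)$, as in Lemmas~\ref{lem: palf} and \ref{lem: plum}. Lemma~\ref{lem: palf} provides a Lefschetz fibration $\pi_W : W \to B^2$ with regular fiber $\Sigma_{p,1}$ and vanishing cycles $\lambda_1, \dots, \lambda_n$, so that the induced open book $\OB_W$ on $\partial W$ has page $\Sigma_{p,1}$ and monodromy $D(\delta) = D(\lambda_n)\cdots D(\lambda_1)$ with $\delta$ boundary-parallel; Lemma~\ref{lem: plum} provides an achiral Lefschetz fibration $\pi_V : V \to B^2$ with regular fiber $\Sigma_{p,1}$ and two vanishing cycles, so that $\OB_V$ on $\partial V$ has page $\Sigma_{p,1}$ and monodromy $D^{-1}(\delta)$. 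Since $W$ and $V$ are complementary pieces of $X$, the common boundary $\partial W = \partial V$ carries opposite boundary orientations, so the tautological identification $f : \partial W \to \partial V$ is orientation-reversing; and since reversing the orientation of a $3$-manifold inverts open-book monodromy, $f$ carries $\OB_W = (\Sigma_{p,1}, D(\delta))$ to $\OB_V = (\Sigma_{p,1}, D^{-1}(\delta))$. This is precisely where the hypothesis that the section has self-intersection $-1$ enters: it is what forces the total monodromy of $\pi_W$ to be a single positive boundary twist, matching the single negative boundary twist supplied by $\pi_V$.

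I would then feed each piece into Lemma~\ref{lem: leftri}. Applied to $\pi_W$ (regular fiber $\Sigma_{p,1}$, so $b = 1$, with $n$ vanishing cycles) it yields a $(p+n,\, 2p;\, p,1)$-relative trisection diagram $(\Sigma, \alpha, \beta, \gamma)$ of $W$, drawn explicitly from the ordered list $\lambda_1, \dots, \lambda_n$ by the iterated two-holed-torus procedure of Remark~\ref{rem: order}; applied to $\pi_V$ (regular fiber $\Sigma_{p,1}$, with two vanishing cycles) it yields a $(p+2,\, 2p;\, p,1)$-relative trisection diagram $(\Sigma', \alpha', \beta', \gamma')$ of $V$. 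Both induced open books have page $\Sigma_{p,1}$, so the diagrams $(\Sigma, \alpha, \beta, \gamma)$, $(\Sigma', \alpha', \beta', \gamma')$ together with $f$ meet the hypotheses of Lemma~\ref{lem:gluem} and Proposition~\ref{prop:gluediagram} (with $p' = p$, $b' = b = 1$). Gluing produces a trisection of $X = W \cup_f V$ of genus $G = g + g' + b - 1 = (p+n) + (p+2) + 1 - 1 = 2p + n + 2$ with $K = k + k' - (2p + b - 1) = 2p + 2p - 2p = 2p$, and Proposition~\ref{prop:gluediagram} outputs the corresponding $(2p+n+2,\, 2p)$-trisection diagram $(\Sigma^*, \alpha^*, \beta^*, \gamma^*)$ explicitly, by gluing the two relative diagrams along $f$ as in Definition~\ref{def:star}. (As a consistency check, $2 + G - 3K = 4 - 4p + n = \chi(S^2)\chi(\Sigma_p) + n = \chi(X)$.)

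For the statement about $\widetilde X$, note that blowing down the $(-1)$-section is supported inside $V$: it replaces $V$ by $\widetilde V$ while leaving $W$ and the boundary $\partial V = \partial \widetilde V$ (hence $\OB_V$) unchanged, so $\widetilde X = W \cup_f \widetilde V$ with the same gluing map $f$. By Remark~\ref{blowd}, $\widetilde V$ carries an achiral Lefschetz fibration $\pi_{\widetilde V} : \widetilde V \to B^2$ with regular fiber $\Sigma_{p,1}$ and a single vanishing cycle $\delta$, still inducing $\OB_V$ on its boundary; hence Lemma~\ref{lem: leftri} gives a $(p+1,\, 2p;\, p,1)$-relative trisection diagram of $\widetilde V$. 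Re-running Proposition~\ref{prop:gluediagram} with $(\Sigma, \alpha, \beta, \gamma)$ and this diagram along $f$ yields an explicit $(G',\, 2p)$-trisection diagram of $\widetilde X$ with $G' = (p+n) + (p+1) + 1 - 1 = 2p + n + 1$.

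The only non-formal step is the verification that $f$ carries $\OB_W$ to $\OB_V$ — that is, that the two open books produced by Lemmas~\ref{lem: palf} and \ref{lem: plum} are genuinely the two halves of a single structure straddling the equator of $S^2$. Granting the monodromy computations in those two lemmas (in which the self-intersection $-1$ is packaged as the relation $D(\lambda_n)\cdots D(\lambda_1) = D(\delta)$), this reduces to the standard fact that orientation reversal inverts open-book monodromy; after that, both the $(G,K)$ count and the explicit diagram are immediate from Lemma~\ref{lem:gluem} and Proposition~\ref{prop:gluediagram}. A minor point worth double-checking is that the ordered cut systems of arcs for $W$ and for $V$ can be chosen compatibly with $f$ on the common page, which is exactly the set-up demanded by Definition~\ref{def:star}.
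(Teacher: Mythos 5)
Your proof is correct and follows essentially the same route as the paper: decompose $X = W \cup_\partial V$ with $V$ a neighborhood of the $(-1)$-section union a fiber, apply Lemma~\ref{lem: leftri} to the Lefschetz fibrations on $W$ and $V$ (from Lemmas~\ref{lem: palf} and~\ref{lem: plum}) to obtain $(p+n,2p;p,1)$- and $(p+2,2p;p,1)$-relative trisections with matching boundary open books, then glue via Lemma~\ref{lem:gluem} and Proposition~\ref{prop:gluediagram}; and likewise replace $V$ by $\widetilde V$ (Remark~\ref{blowd}) for $\widetilde X$. The only material difference is that you spell out the orientation-reversal-inverts-monodromy argument for why $\OB_W$ and $\OB_V$ match, a point the paper leaves implicit when it says the open books are identical on $\partial W = -\partial V$.
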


 \begin{proof} Suppose that $\pi_X: X \to S^2$  is a genus $p$ Lefschetz fibration  with $n$ singular fibers, equipped with a section of square $-1$. Let $V$ denote a regular neighborhood of the section union a nonsingular fiber as in Lemma~\ref{lem: palf}.  Then the $4$-manifold $W$ obtained by removing the interior of $V$ from $X$ admits a  Lefschetz fibration $\pi_W: W \to B^2$ whose regular fiber is a genus $p$ surface $\Si_{p,1}$ with connected boundary. Note that the monodromy of the open book on the boundary $\del W $ (oriented as the boundary of $W$) is given by  $D(\d) \in \G_{p,1}$. Applying  Lemma~\ref{lem: leftri} we get a $(p+n, 2p; p, 1)$-relative trisection on $W$ realizing the open book on $\del W$.  

On the other hand, there is an achiral Lefschetz fibration $\pi _V : V \to B^2$ as described in Lemma~\ref{lem: plum}. It follows, by Lemma~\ref{lem: leftri}, that $V$ admits a $(p+2, 2p; p, 1)$-relative trisection  realizing the open book on $\del V$ (oriented as the boundary of $V$) whose monodromy is given by $D^{-1}(\d) \in \G_{p,1}$.

To get a $(2p+n+2, 2p)$-trisection on $X$ we just glue the $(p+n, 2p; p, 1)$-relative trisection on $W$ with the  $(p+2, 2p; p, 1)$-relative trisection on $V$ along the identical open book on their common boundary $\del W= -\del V$, using Lemma~\ref{lem:gluem}.  In addition, using Proposition~\ref{prop:gluediagram}, the corresponding relative trisection diagrams can be glued together diagrammatically to obtain a $(2p+n+2, 2p)$-trisection diagram of $X$.

To prove the last statement in Theorem~\ref{thm: main}, we first observe by Remark~\ref{blowd}  that 
$$X \cong W \cup_{\del} V \cong W  \cup_{\del} (\widetilde{V} \# \cpb) \cong (W \cup_{\del}  \widetilde{V})   \# \cpb \cong \widetilde{X} \# \cpb . $$ In particular, we have $\widetilde X  =  W \cup_{\del}  \widetilde{V}$, and hence  using Lemma~\ref{lem:gluem}  we obtain a  $(2p+n+1, 2p)$-trisection of $\widetilde X$ by gluing the  $(p+n, 2p; p, 1)$-relative trisection on $W$ and the $(p+1, 2p; p, 1)$-relative trisection on $\widetilde V$  corresponding to  
the Lefschetz fibration   $\pi_{\widetilde{V}} :  \widetilde{V} \to B^2$ with only one vanishing cycle,   along the identical open book on their boundaries. The corresponding trisection diagram for $\widetilde{X}$ can be obtained using Proposition~\ref{prop:gluediagram}. \end{proof}

\section{Trisection diagrams} \label{sec: ex}

In our figures, if there is no indicated boundary, then by adding the point at infinity we obtain a {\em closed} oriented surface.  Two small black disks labeled with the same white number or letter represents surgery on the sphere $S^0$ consisting of the centers of these disks. This means that we remove these two disks from the underlying oriented surface and glue in a cylinder $I \times S^1$   to get an oriented surface of one higher genus.  Any arc connecting two  black disks with the same label represents a simple closed curve obtained  by joining the two ends of this arc by $I \times p \subset I \times S^1$ for some $p \in S^1$.  Therefore, we refer to these kind of arcs as curves in our figures and reserve the term arc for the properly embedded arcs appearing in the cut systems.

In the trisection diagrams, the $\a, \b$ and $\g$ curves are drawn (and also referred to) as red, blue and green curves, respectively.   We follow the same coloring convention for the cut system $(\A_\a, \A_\b, \A_\g)$  of arcs for the relative trisection diagrams.  Namely, the arcs in $\A_\a$, $\A_\b$ and $\A_\g$ are  drawn (and also referred to) as red, blue and green arcs, respectively.

{\bf Notation:}  We use $\G_{p,b}$ to denote the mapping class group of the genus $p$ surface $\Si_{p,b}$ with $b$ boundary components, and $D(\l)$ to denote the right-handed Dehn twist along a simple closed curve $\l \subset \Si_{p,b}$. We use functional notation for the products of Dehn twists in $\G_{p,b}$. 

\subsection{The elliptic surface $E(1)$}

Our goal in this subsection is to  illustrate the method of proof of Theorem~\ref{thm: main}  by constructing an explicit $(16,2)$-trisection diagram of the elliptic surface $E(1)$, based on the standard elliptic fibration $E(1)  \to S^2$ with $12$ singular fibers. This is not the minimal genus trisection of $E(1)$, however, since $E(1)$ is diffeomorphic to $\cp \# 9 \cpb$, which  admits a genus $10$ trisection obtained from the connected sum of genus one trisections of $\cp$ and $\cpb$ given in \cite{gk}.  

It is well-known that the relation $D(\d) = (D(b)D(a))^6 \in  \G_{1,1}$,   where $a$ and $b$ denote  the standard generators of the first homology of $\Si_{1,1}$ and $\d$ is a curve  parallel to $\del \Si_{1,1}$, describes the elliptic Lefschetz fibration $E(1) \to S^2$ with $12$ singular fibers, equipped with a section of square $-1$.  

According to our notation in Section~\ref{sec: main}, $E(1) = W \cup_{\del} V$, where $V$ denotes a regular neighborhood of the section union a nonsingular fiber.  By Lemma~\ref{lem: plum}, there is  an achiral Lefschetz fibration  $\pi_V : V \to B^2$, with two vanishing cycles on $\Si_{1,1}$, a homotopically  trivial curve  $\e$ with framing $-1$ and a boundary parallel curve $\d$ with framing $+1$.

Note that the Dehn twist $D(\e)$ is isotopic to the identity since $\e$ is homotopically trivial, and hence it does not contribute to the monodromy.  However, we still have to take the vanishing cycle $\e$ into account while drawing the corresponding $(3,2;1,1)$-relative trisection diagram of $V$ shown in Figure~\ref{fig:LFPb}, which we obtained by applying  Remark~\ref{rem: order}. In Figure~\ref{fig:LFPb},  the surgeries labeled by $1$ and $2$ correspond to the vanishing cycles $\e$ and $\d$, respectively.  

Next, we  decorate the relative trisection diagram for $V$   with  a cut system  $(\A^V_\a, \A^V_\b, \A^V_\g)$ of arcs as follows.  First note that surgeries along the two red curves cancel out the surgeries labeled by $1$ and $2$  in Figure~\ref{fig:LFPb} and hence $\Sigma_\a$ is the genus one surface with one boundary component,  represented by the surgery labelled by $h$. 

By definition, the set $\A^V_\a$ consists of two red arcs that  cut $\Sigma_\a$ into a disk. An obvious choice of  $\A^V_\a$  is depicted in Figure~\ref{fig:VArcs}.  Then we obtain $\A^V_\b$ consisting of the two blue arcs in Figure~\ref{fig:VArcs},  simply by taking parallel copies of the red arcs.  We do not need any handleslides since the resulting blue arcs are clearly disjoint from the blue curves. Finally, the two green arcs belonging to $\A^V_\g$ in Figure~\ref{fig:VArcs} are  obtained by applying some handleslides to the parallel copies of the blue arcs over the blue curve associated to the surgery  labeled by $2$.

\begin{figure}
\centering
\begin{minipage}{.5\textwidth}
  \centering
  \includegraphics[width=.9\linewidth]{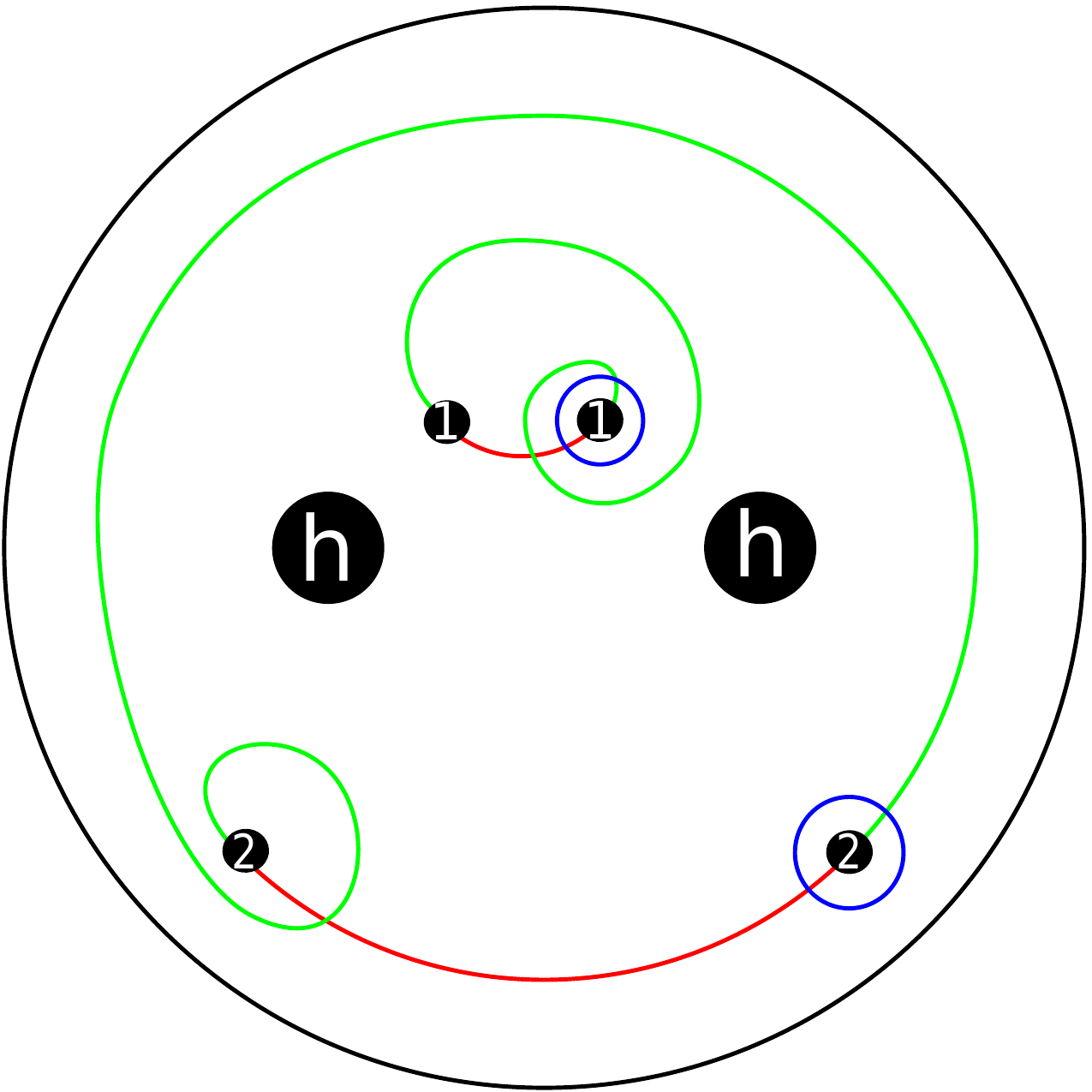}
  \captionof{figure}{The $(3,2;1,1)$-relative trisection diagram for $V.$}
  \label{fig:LFPb}
\end{minipage}%
\begin{minipage}{.5\textwidth}
  \centering
  \includegraphics[width=.9\linewidth]{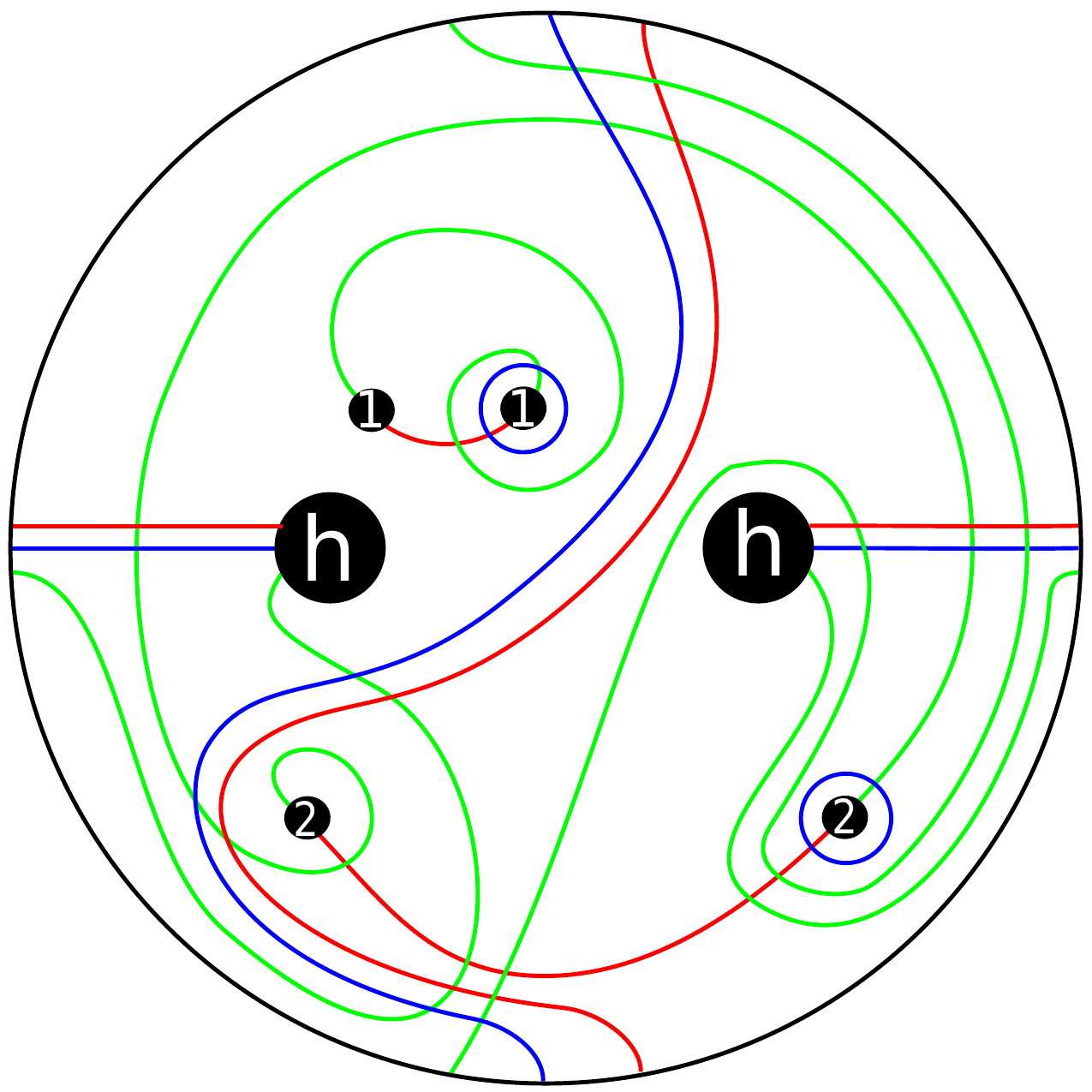}
  \captionof{figure}{The cut system  $(\A^W_\a, \A^W_\b, \A^W_\g)$ of arcs for the trisection diagram of $V.$}
  \label{fig:VArcs}
\end{minipage}
\end{figure}

\begin{figure}

	\labellist
		\pinlabel $b_1$ at 200 254
		\pinlabel $b_2$ at 208 209
		\pinlabel $b_3$ at 215 160
		\pinlabel $b_4$ at 215 110
		\pinlabel $b_5$ at 208 70
		\pinlabel $b_6$ at 200 24
		
		\pinlabel $a_6$ at 202 145 
		\pinlabel $a_5$ at 178 145
		\pinlabel $a_4$ at 155 145
		\pinlabel $a_3$ at 132 145
		\pinlabel $a_2$ at 110 145
		\pinlabel $a_1$ at 86 145
	\endlabellist
		\includegraphics[scale=.7]{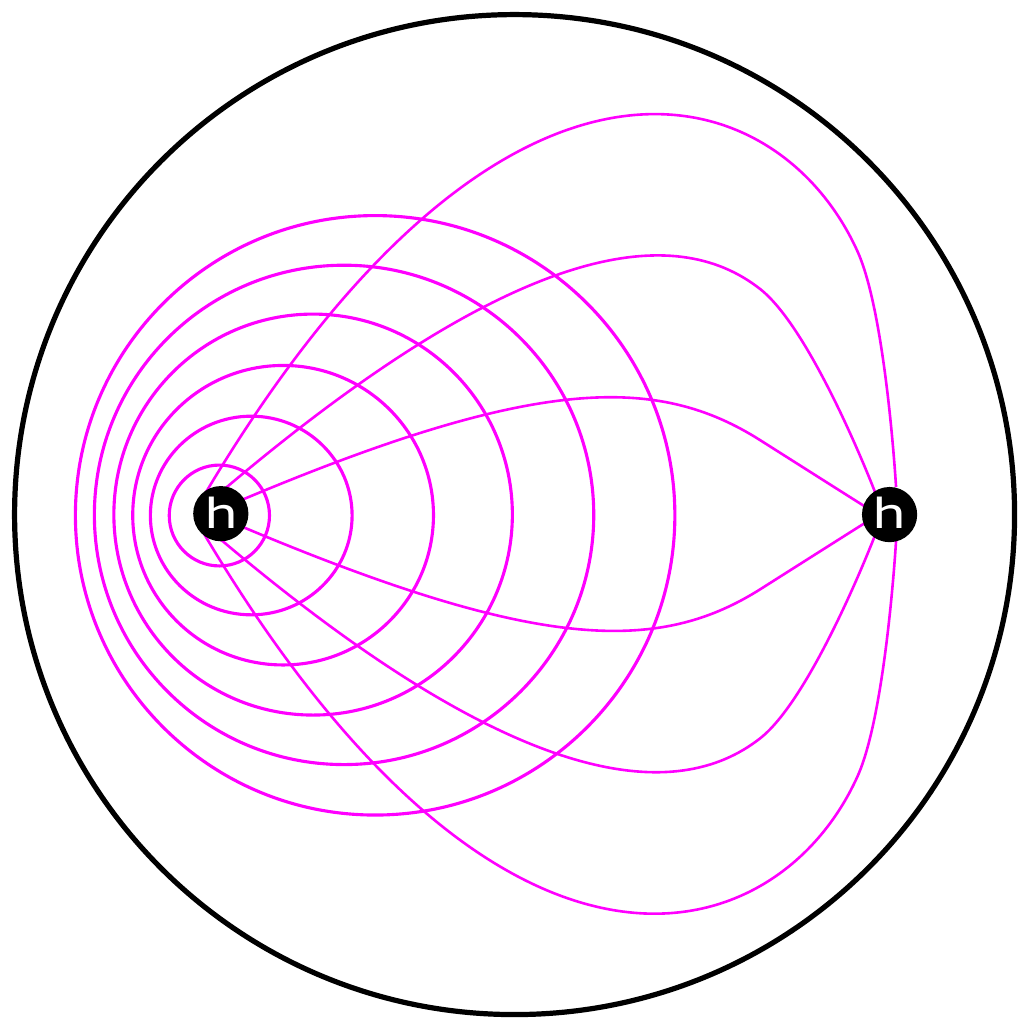}
	\caption{The vanishing cycles of the Lefschetz fibration $\pi_W : W \to B^2$.~\label{fig:6ab} }
\end{figure}

Now we turn our attention to $W,$ which is obtained by removing the interior of $V$ from the elliptic surface $E(1)$.  According to Lemma~\ref{lem: palf}, the standard elliptic fibration on $E(1)$ with $12$ singular fibers induces  a Lefschetz fibration  $\pi_W : W\to B^2$ whose vanishing cycles are depicted in Figure~\ref{fig:6ab}.   The corresponding $(13,2;1,1)$-relative trisection diagram for $W$ (see Remark~\ref{rem: order})  is depicted in Figure~\ref{fig:WTrisection}. 
The labeling of the surgeries from $1$ to $12$ in  Figure~\ref{fig:WTrisection} corresponds to the ordering of  the $12$ vanishing cycles $a_1,b_1,a_2, \ldots,  b_6$ of the Lefschetz fibration  $\pi_W : W\to B^2$ shown in Figure~\ref{fig:6ab}.

\begin{figure}
\centering
\begin{minipage}{.5\textwidth}
  \centering
  \includegraphics[width=.97\linewidth]{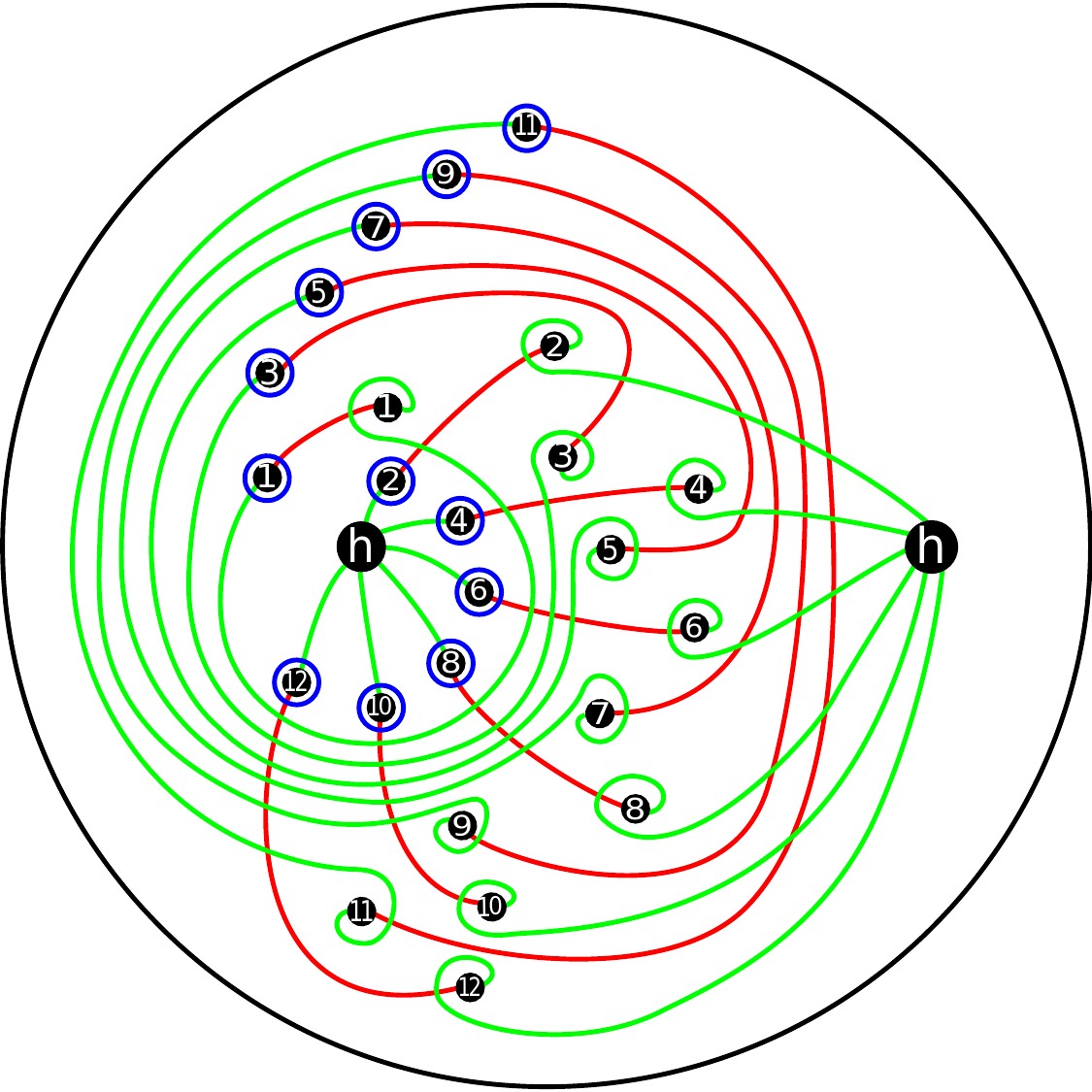}
  \captionof{figure}{The $(13,2;1,1)$-relative trisection diagram for $W$.}
 \label{fig:WTrisection}
\end{minipage}%
\begin{minipage}{.5\textwidth}
  \centering
  \includegraphics[width=.97\linewidth]{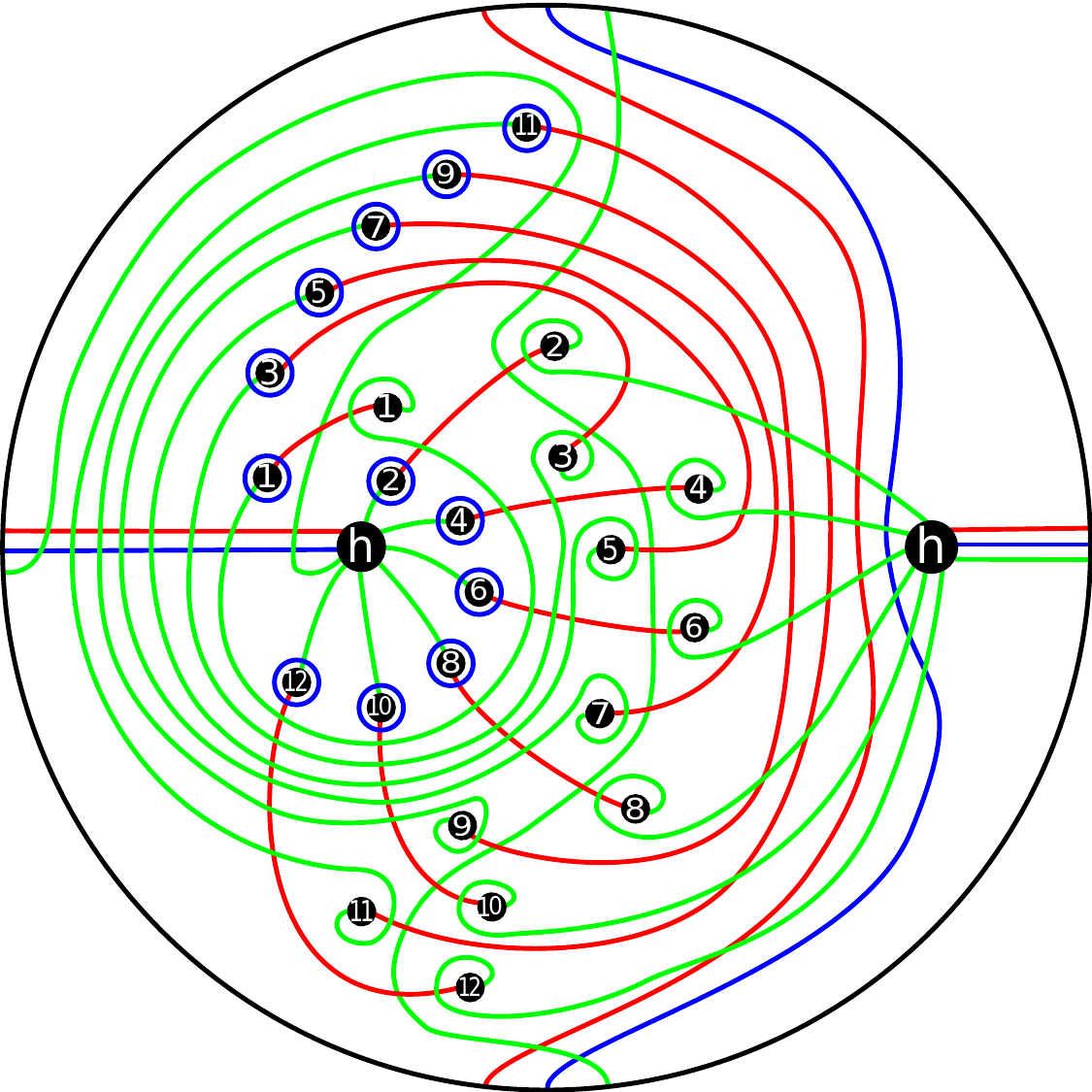}
  \captionof{figure}{The cut system  $(\A^W_\a, \A^W_\b, \A^W_\g)$ of arcs for the trisection diagram of $W$.}
  \label{fig:WArcs}
\end{minipage}
\end{figure}

We choose  a cut system  $(\A^W_\a, \A^W_\b, \A^W_\g)$ of arcs (see Figure~\ref{fig:WArcs}) for the relative trisection diagram for $W$ as follows.  We first choose  $\A^W_\a$ (the two red arcs) cutting $\Si_\a$ into a disk so that the end points of these two red arcs match with the end points of the two red arcs in $\A^V_\a$ in Figure~\ref{fig:VArcs}. Next,  we obtain the blue arcs in $\A^W_\b$ by taking parallel copies of  the red arcs without any need for handleslides. Finally,  we obtain the green arcs in $\A^W_\g$ by applying some handleslides to the parallel copies of the blue arcs.

 \begin{figure}[ht]
\includegraphics[scale=.55]{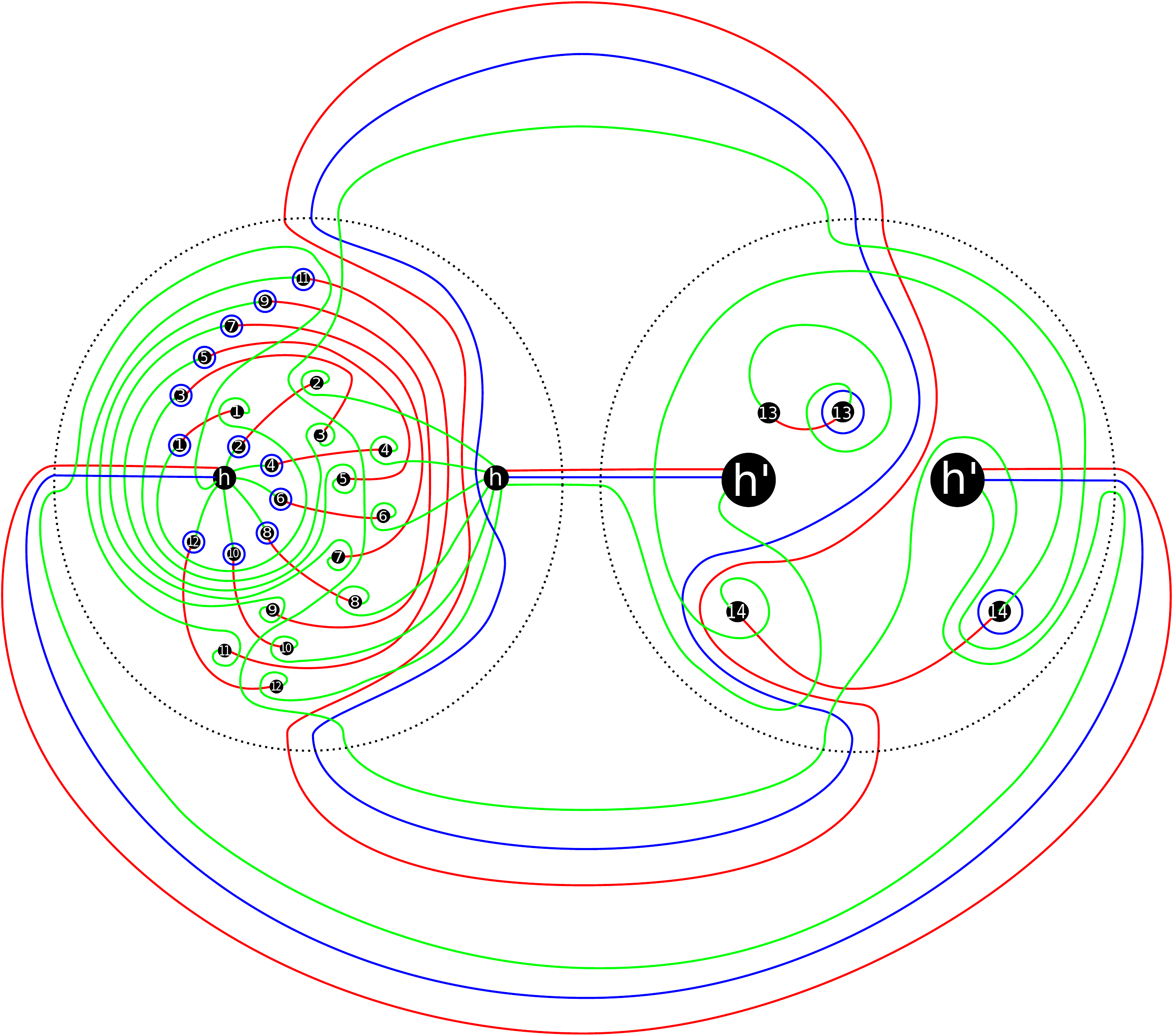}
\caption{A $(16, 2)$-trisection diagram of the elliptic surface $E(1)$.  There is copy of Figure~\ref{fig:WArcs} (resp. Figure~\ref{fig:VArcs}) inside the dotted circle on the left (resp. right), but the dotted circles are not part of the diagram.   ~\label{E1}}
\end{figure}
According to Proposition~\ref{prop:gluediagram}, in order  to obtain a $(16,2)$-trisection diagram for $E(1) = W \cup_{\del} V$, we  just have to glue the $(13,2;1,1)$-relative trisection diagram for $W$ decorated with  the cut system  $(\A^W_\a, \A^W_\b, \A^W_\g)$ of arcs depicted in Figure~\ref{fig:WArcs} with the  $(3,2;1,1)$-relative trisection diagram of $V$ decorated with  the cut system  $(\A^V_\a, \A^V_\b, \A^V_\g)$ of arcs  depicted in Figure~\ref{fig:VArcs} so that the end points of the red, blue and green arcs in Figure~\ref{fig:WArcs}  are identified with the end points of the the red, blue and green arcs in Figure~\ref{fig:VArcs}, respectively,  on the common boundary circle. We depicted a planar version of the  resulting $(16,2)$-trisection diagram for $E(1) = W \cup_{\del} V$ in Figure~\ref{E1}.


\begin{remark} Note that there is also a  genus $10$ trisection diagram of $E(1)$ viewed as the  connected sum $\cp \#9 \cpb$, which we  depicted in Figure~\ref{fig:9cp2}. The trisection diagrams of $E(1)$ shown in Figure~\ref{E1} and Figure~\ref{fig:9cp2} are related by stabilization, handleslides and diffeomorphism by \cite[Corollary  12]{gk}.  \end{remark}

\begin{figure}[ht] 
\includegraphics[scale=.8]{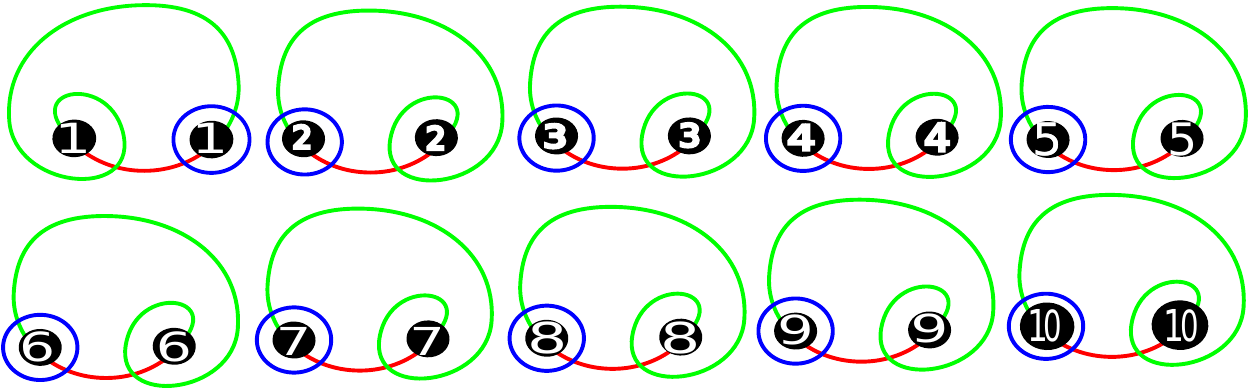}
\caption{A $(10,0)$-trisection diagram of $\cp \#9 \overline{\cp} \cong E(1)$.  ~\label{fig:9cp2}}
\end{figure}

The elliptic surface $E(n)$ for $n \geq 2$ would be the next natural candidate to trisect. Unfortunately, our method of proof of Theorem~\ref{thm: main}  does not {\em immediately} extend to cover  any of these $4$-manifolds. The elliptic fibration on $E(n)$ has a section of square $-n$. The regular neighborhood of the union of a non-singular fiber and the section of square $-n$ is a plumbing which admits an achiral Lefschetz fibration  as in Lemma~\ref{lem: plum}, but when $n \geq 2$,  the open book on the boundary does not match the open book coming from the Lefschetz fibration in the complement. 

On the other hand, $E(2) \#2 \cpb$ admits a genus $2$ Lefschetz fibration over $S^2$ with $30$ singular fibers, equipped with a section of square $-1$ (cf. \cite{m}). An implementation of Theorem~\ref{thm: main} gives a $(36, 4)$-trisection of $E(2) \#2 \cpb$. In the next subsection, we turn our attention to another well-known genus $2$ Lefschetz fibration on the Horikawa surface $H'(1)$.

\subsection{The Horikawa surface $H'(1)$} Our goal in this subsection is to present an explicit  $(46,4)$-trisection diagram of the Horikawa surface $H'(1)$, which is defined as the desingularization of the double branched cover of the Hirzebruch surface $\mathbb{F}_2$ (cf. \cite[page 269]{gs}). As we pointed out in the  introduction,   $H'(1)$ is a simply-connected complex surface of general type, which is homeomorphic but not diffeomorphic to $5 \cp \# 29 \cpb$.

 \begin{figure}
 \labellist
		\pinlabel $c_1$ at 100 280
		\pinlabel $c_2$ at 205 380
		\pinlabel $c_3$ at 445 220
		\pinlabel $c_4$ at 205 100
		\pinlabel $c_5$ at 100 150
		\endlabellist
\includegraphics[scale=.3]{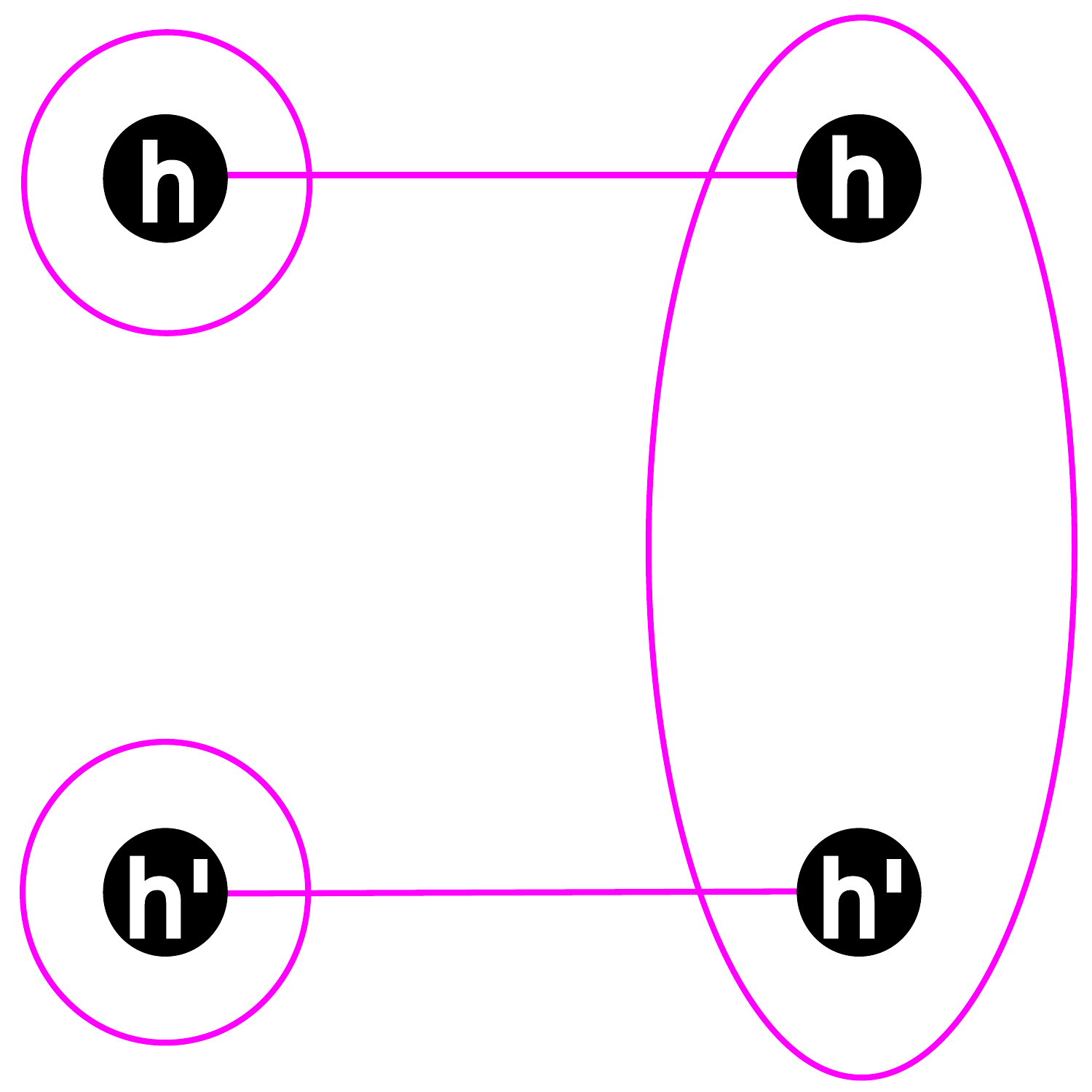}
\caption{The curves $c_1, \ldots, c_5$ on $\Si_2$. ~\label{fig:Genus2Planar}}
\end{figure}

The  relation $$1 = (D(c_1)D(c_2)D(c_3)D(c_4))^{10} \in  \G_{2}$$ where the curves $c_1,c_2,c_3,c_4$ are shown in Figure~\ref{fig:Genus2Planar},  defines a genus $2$ Lefschetz fibration over $S^2$. In \cite{f}, Fuller showed that the total space of this genus $2$ Lefschetz fibration is diffeomorphic to the Horikawa surface $H'(1)$. Moreover, it is easy to see that this fibration admits a section of square $-1$, since the relation above lifts to the chain relation $$D(\d)  = (D(c_1)D(c_2)D(c_3)D(c_4))^{10} \in  \G_{2,1}. $$  According to \cite{s}, $H'(1)$ admits a unique sphere of square $-1$, and therefore by blowing it down,  we also obtain a trisection diagram for the minimal simply connected complex surface $\widetilde{H'(1)}$ of general type,  by Theorem~\ref{thm: main}.  
 
\begin{figure}[ht] 
\includegraphics[scale=.3]{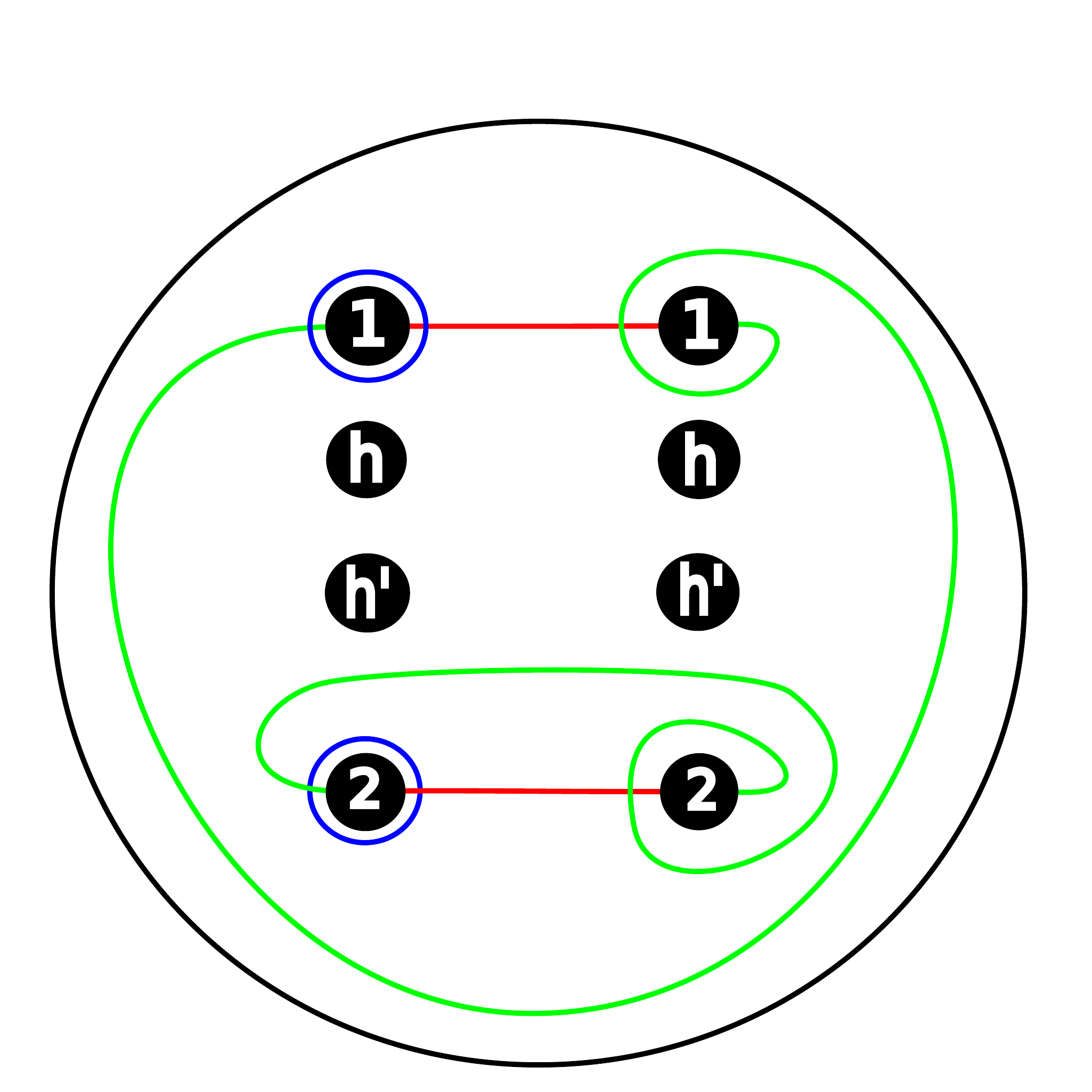}
\caption{The $(4,4;2,1)$-relative trisection diagram of $V$.  ~\label{fig:horikawa-V}}
\end{figure}

If $V$ denotes a regular neighborhood of the section union a nonsingular fiber, then we have  a decomposition of $H'(1)$ as $ W \cup_{\del} V$. By Lemma~\ref{lem: plum}, there is  an achiral Lefschetz fibration  $\pi_V : V \to B^2$ with regular fiber $\Si_{2,1}$, which has two vanishing cycles:  a homotopically  trivial curve  $\e$ with framing $-1$ and a boundary parallel curve $\d$ with framing $+1$.  By Remark~\ref{rem: order},   we obtain the corresponding $(4,4;2,1)$-relative trisection diagram of $V$ shown in Figure~\ref{fig:horikawa-V}, which is decorated  with  the cut system  $(\A^V_\a, \A^V_\b, \A^V_\g)$ of arcs  in Figure~\ref{fig:horikawa-Varcs}.

\begin{figure}[ht] 
\includegraphics[scale=.44]{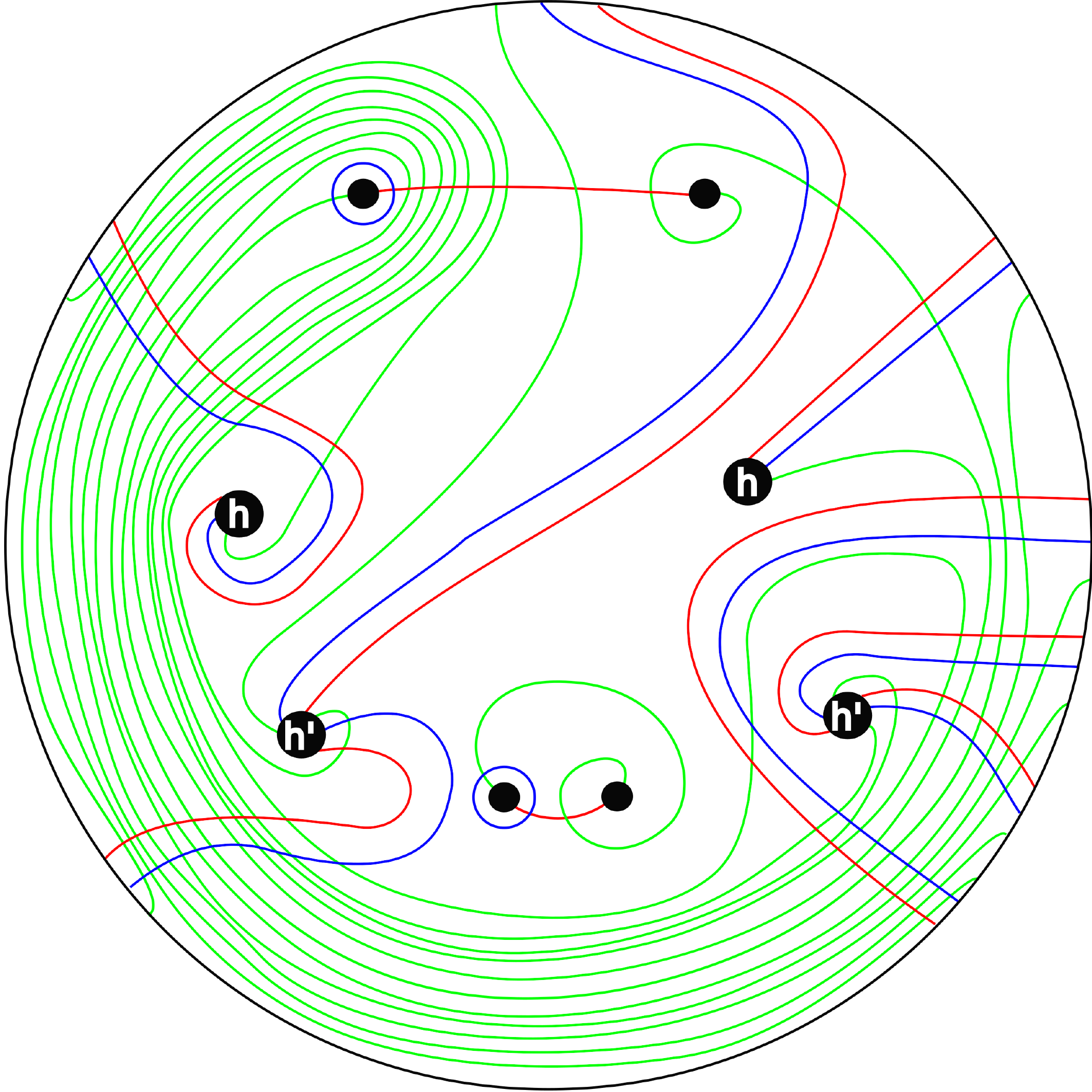}
\caption{The cut system  $(\A^V_\a, \A^V_\b, \A^V_\g)$ of arcs for $V$.  ~\label{fig:horikawa-Varcs}}
\end{figure}

On the other hand, we obtain the corresponding $(42,4;2,1)$-relative trisection diagram of $W$ shown in Figure~\ref{fig:horikawa-W} using the Lefschetz fibration $\pi_W: W \to B^2$ with monodromy $(D(c_1)D(c_2)D(c_3)D(c_4))^{10}$.  Note that we implemented Remark~\ref{rem: order} for the first $8$ vanishing cycles $c_4, c_3, c_2, c_1, c_4, c_3, c_2, c_1$ labeled by the surgeries $1$ through $8$ and the last $4$ vanishing cycles $c_4, c_3, c_2, c_1$ labeled by the surgeries $37$ through $40$.  We hope that the pattern is clear for the reader. In Figure~\ref{fig:horikawa-Warcs}, we depicted  the $(42,4;2,1)$-relative trisection diagram of $W$ decorated with  the cut system  $(\A^W_\a, \A^W_\b, \A^W_\g)$ of arcs.   

\begin{figure}[ht] 
\includegraphics[scale=.7]{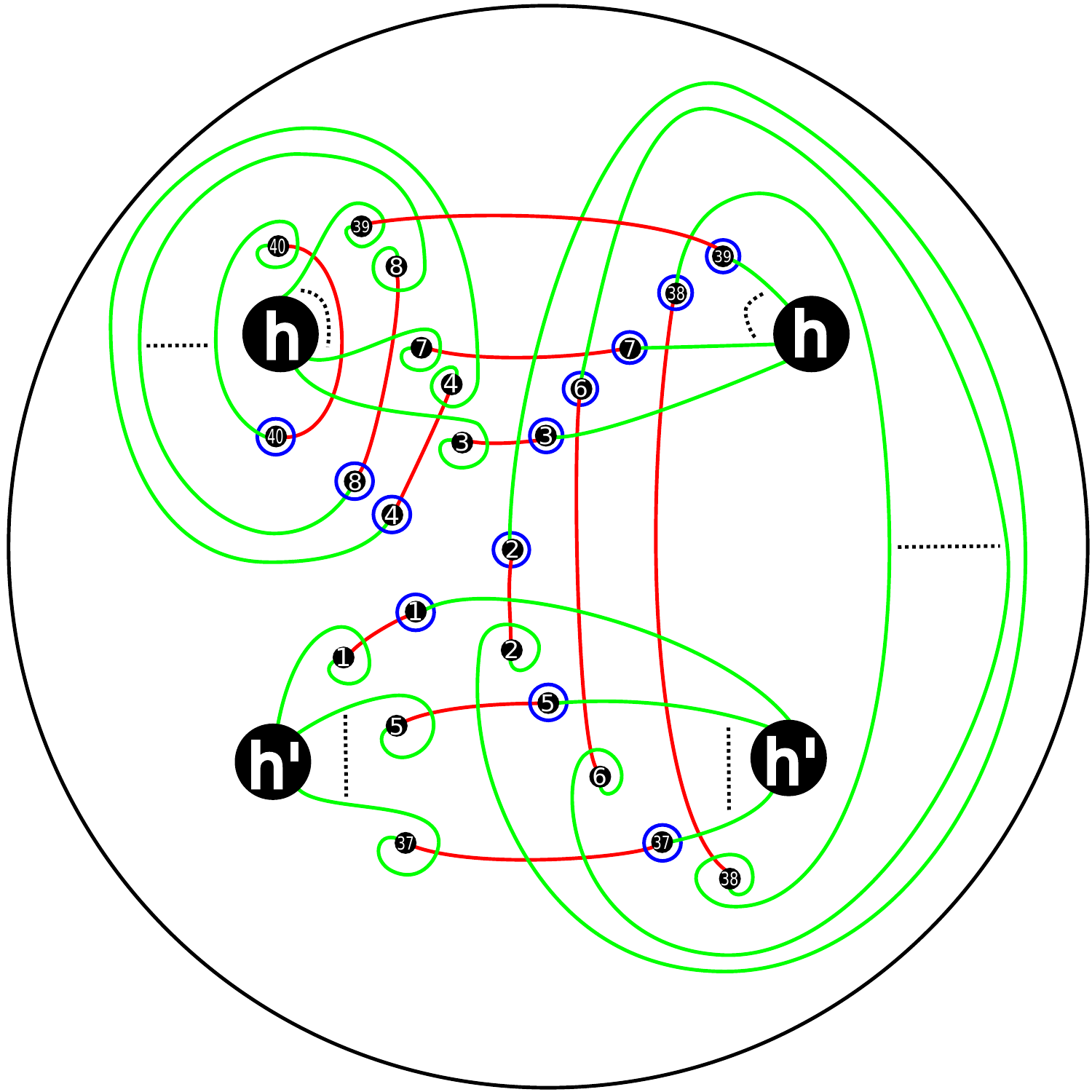}
\caption{The $(42,4;2,1)$-relative trisection diagram of $W$.   ~\label{fig:horikawa-W}}
\end{figure}

\begin{figure}[ht] 
\includegraphics[scale=.7]{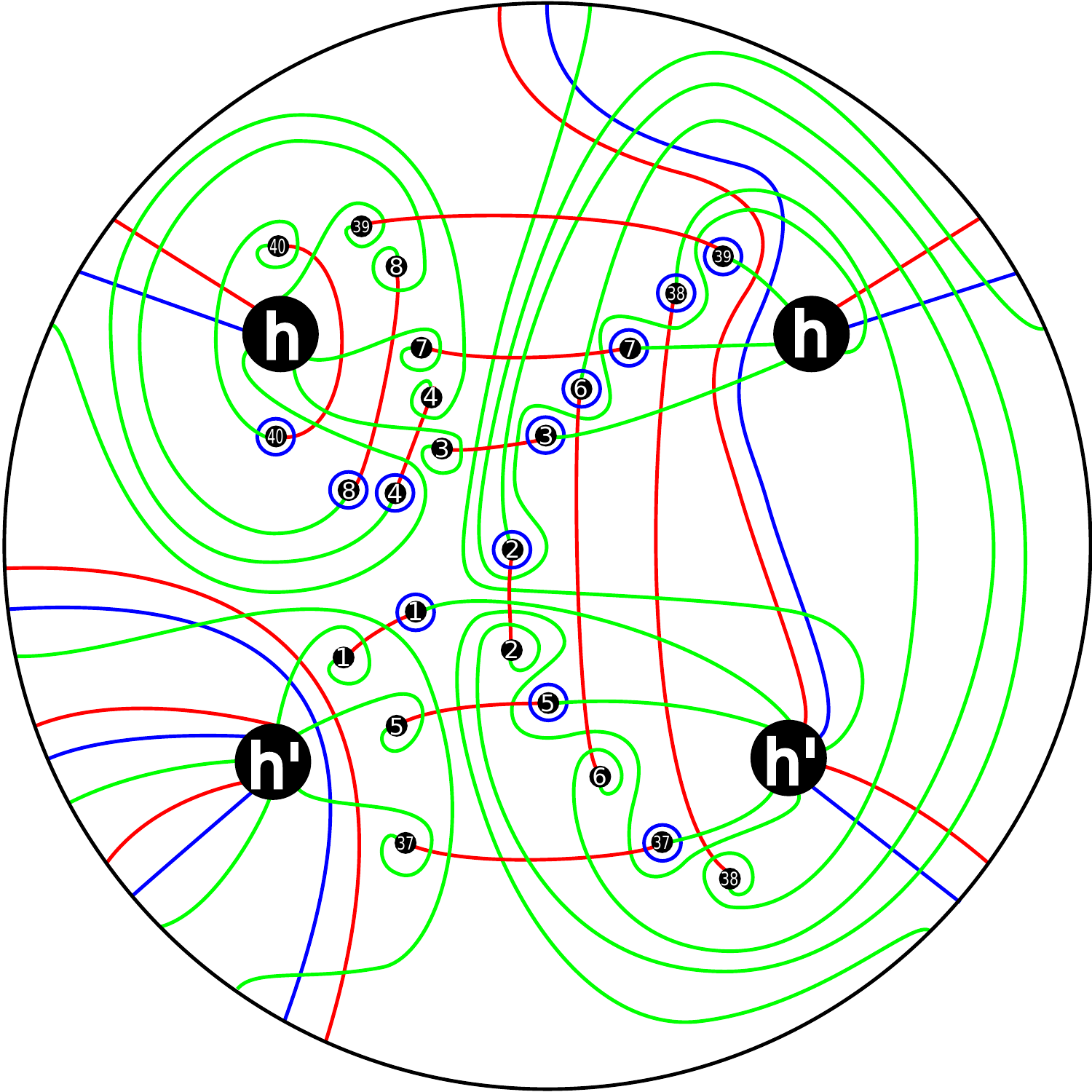}
\caption{The cut system  $(\A^W_\a, \A^W_\b, \A^W_\g)$ of arcs for $W$.  ~\label{fig:horikawa-Warcs}}
\end{figure}

Finally, in order  to obtain a $(46,4)$-trisection diagram for $H'(1) = W \cup_{\del} V$ using Proposition~\ref{prop:gluediagram}, we  just have to glue the $(42,4;2,1)$-relative trisection diagram for $W$ decorated with  the cut system  $(\A^W_\a, \A^W_\b, \A^W_\g)$ of arcs depicted in Figure~\ref{fig:horikawa-Warcs} with the  $(4,4;2,1)$-relative trisection diagram of $V$ decorated with  the cut system  $(\A^V_\a, \A^V_\b, \A^V_\g)$ of arcs  depicted in Figure~\ref{fig:horikawa-Varcs} so that the end points of $(\A^W_\a, \A^W_\b, \A^W_\g)$ are identified with the end points of $(\A^V_\a, \A^V_\b, \A^V_\g)$ on the common boundary circle. The end result is depicted in Figure~\ref{fig:horikawa-glued}.

\begin{figure}[ht] 
\includegraphics[scale=.5]{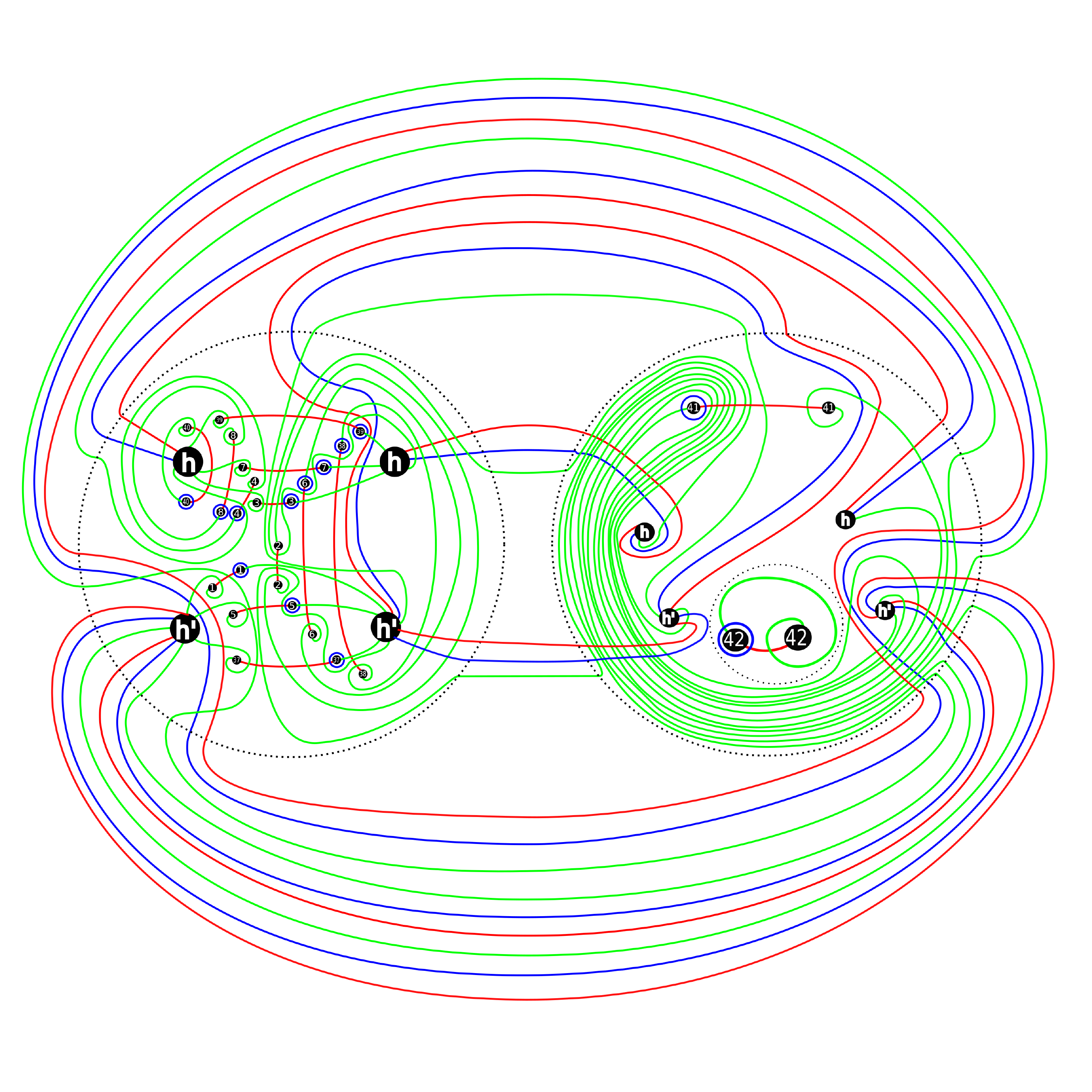}
\caption{ A $(46,4)$-trisection diagram for the Horikawa surface $H'(1)$.  By erasing the surgery labelled by $42$ and the three curves associated to that in the small dotted circle,  we obtain a $(45,4)$-trisection diagram for the minimal simply connected complex surface $\widetilde{H'(1)}$ of general type. ~\label{fig:horikawa-glued}}
\end{figure}

\begin{remark} Similar to the diagram depicted in Figure~\ref{fig:9cp2}, there is indeed a  $(34,0)$-trisection diagram of  $5 \cp \# 29 \cpb$  which can be  
stabilized four  times to yield a  $(46,4)$-trisection diagram.  As a consequence, we obtain explicit $(46,4)$-trisection diagrams for a pair of closed $4$-manifolds, the Horikawa surface $H'(1)$ and  $5 \cp \# 29 \cpb$, which are homeomorphic but not diffeomorphic. 
\end{remark}

\subsection{Trivial $S^2$ bundle over $\Si_h$} \label{subsec: trivial} In \cite{gk}, there is a description of a $(8h+5, 4h+1)$-trisection for any oriented $\Si_h$-bundle over $S^2$, including of course $\Si_h \times S^2$, without an explicit diagram. In Example~\ref{ex: eng}, however, we obtained a $(2h+5, 2h+1)$-trisection of $\Si_h \times S^2$. Here we illustrate our doubling technique by drawing a $(7,3)$-trisection diagram for $T^2 \times S^2$. 

We first observe that $T^2 \times B^2$ admits an achiral Lefschetz fibration over $B^2$ with fiber  $\Si_{1,2}$, which carries  two vanishing cycles $\d_1$ and $\d_2$, each of which is parallel to one boundary component of $\Si_{1,2}$. The monodromy of the this fibration is given by $D(\d_1)D^{-1}(\d_2)$. Implementing  Remark~\ref{rem: order}, we obtain the $(3,3;1,2)$-relative trisection diagram for $T^2 \times B^2$ shown in Figure~\ref{fig:T2XB2} (see also  \cite[Figure 15(b)]{cgp}). A cut system of arcs for this trisection diagram of $T^2 \times B^2$ is depicted in Figure~\ref{fig:T2XB2_WithArcs}. 

To double the $(3,3;1,2)$-relative trisection diagram decorated with the cut system of arcs in  Figure~\ref{fig:T2XB2_WithArcs}, we simply draw its mirror image (for the orientation reversal)  next to it and identify  the inner and outer boundary components, respectively,  to each other. It is clear how to identify the outer boundary components and glue the arcs with the same color, as we illustrated many times so far in this paper. To identify the inner boundary components and still draw a planar diagram, we replace the inner boundaries by the surgery labeled by $5$ in Figure~\ref{fig:T2xS2}. As a result we obtain the $(7,3)$-trisection diagram for $T^2 \times S^2$ as shown in Figure~\ref{fig:T2xS2}, where we isotoped some of the curves after implementing Corollary~\ref{cor: diag}.  One can similarly draw a  $(2h+5, 2h+1)$-trisection diagram of  $\Si_h \times S^2$ for any $ h \geq 2$.

\begin{figure}
\centering
\begin{minipage}{.5\textwidth}
  \centering
  \includegraphics[width=.9\linewidth]{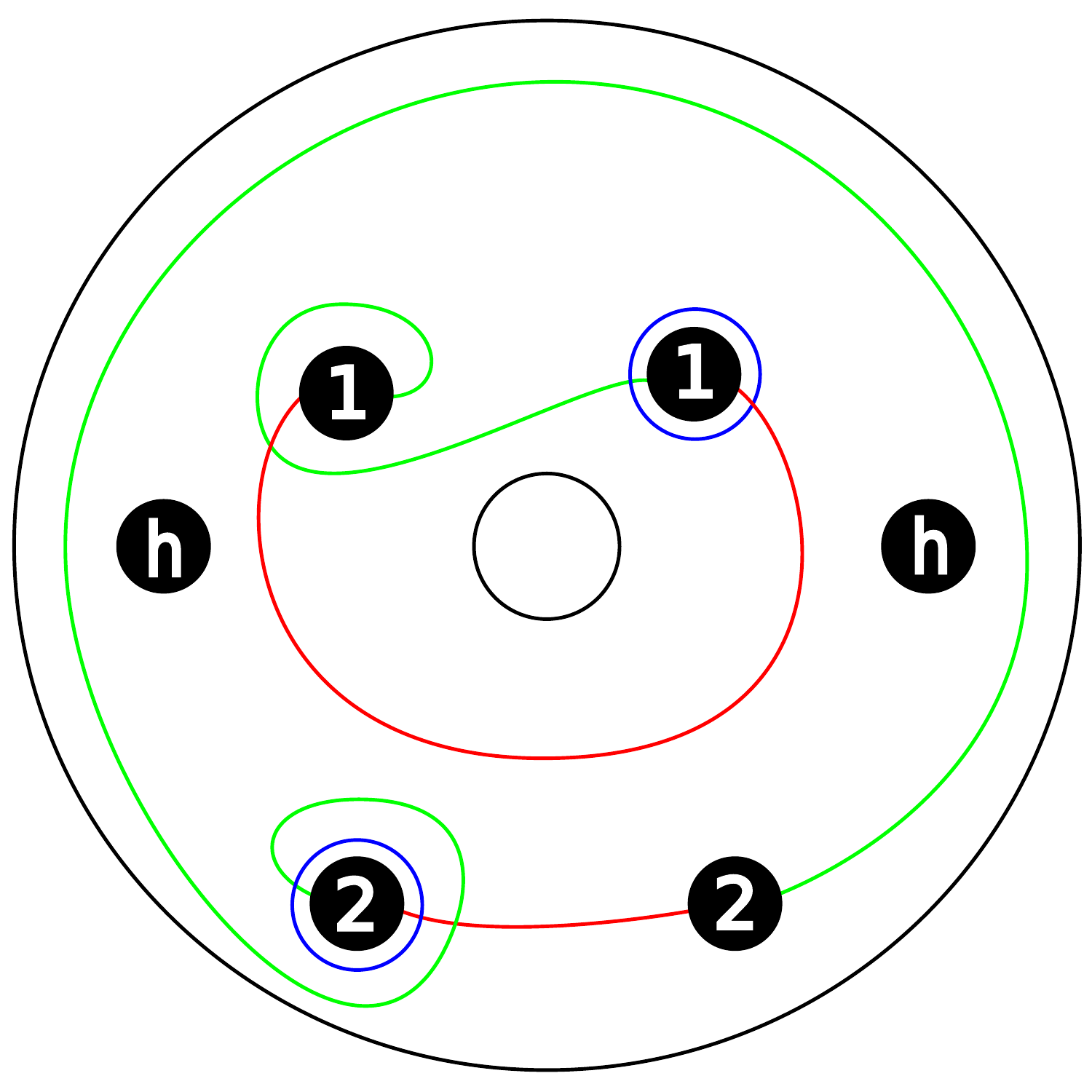}
  \captionof{figure}{The $(3,3;1,2)$-relative trisection diagram for $T^2 \times B^2$.}
  \label{fig:T2XB2}
\end{minipage}%
\begin{minipage}{.5\textwidth}
  \centering
  \includegraphics[width=.9\linewidth]{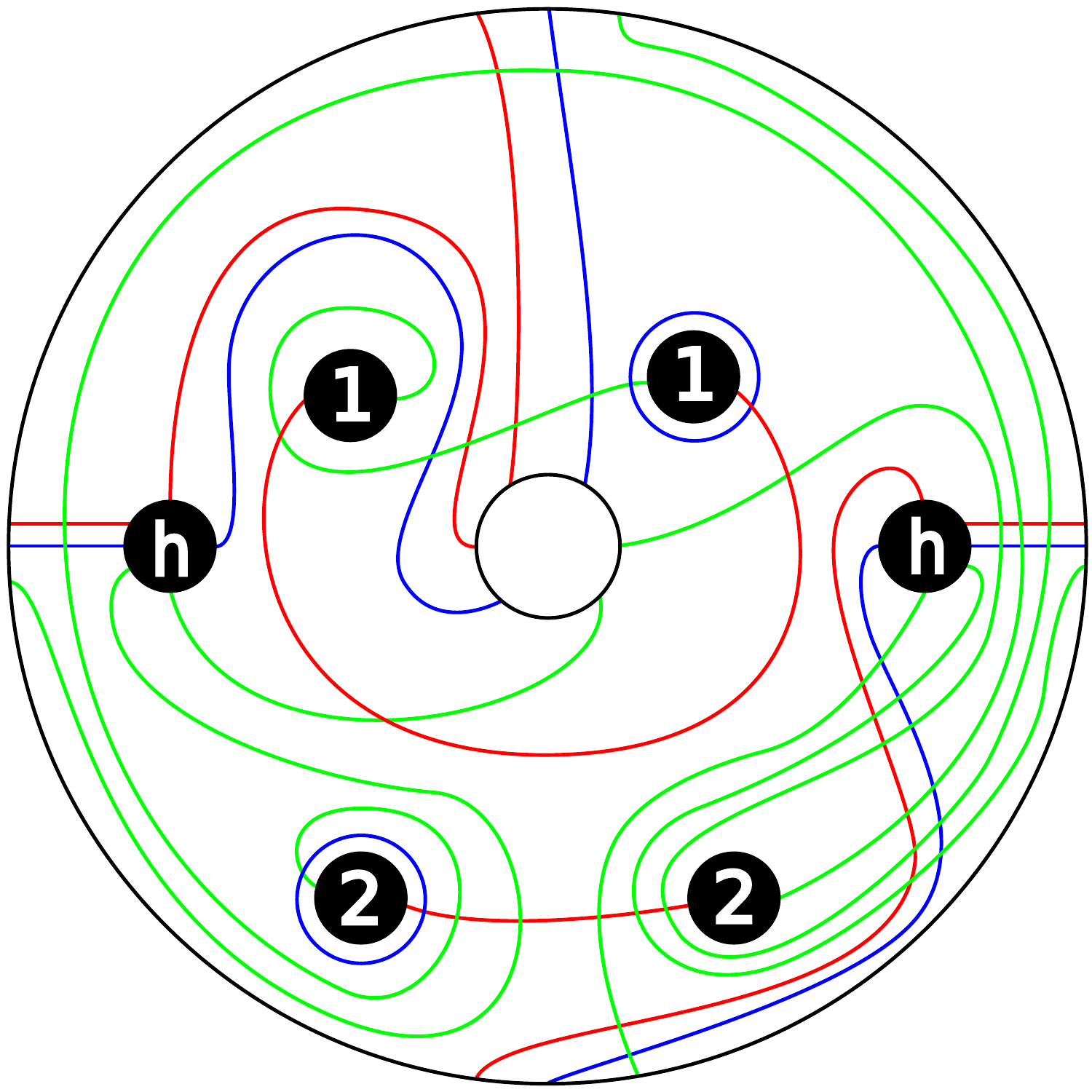}
  \captionof{figure}{The cut system of arcs for  the relative trisection diagram of $T^2 \times B^2$.}
  \label{fig:T2XB2_WithArcs}
\end{minipage}
\end{figure}

\begin{figure}[ht] 
\includegraphics[scale=.6]{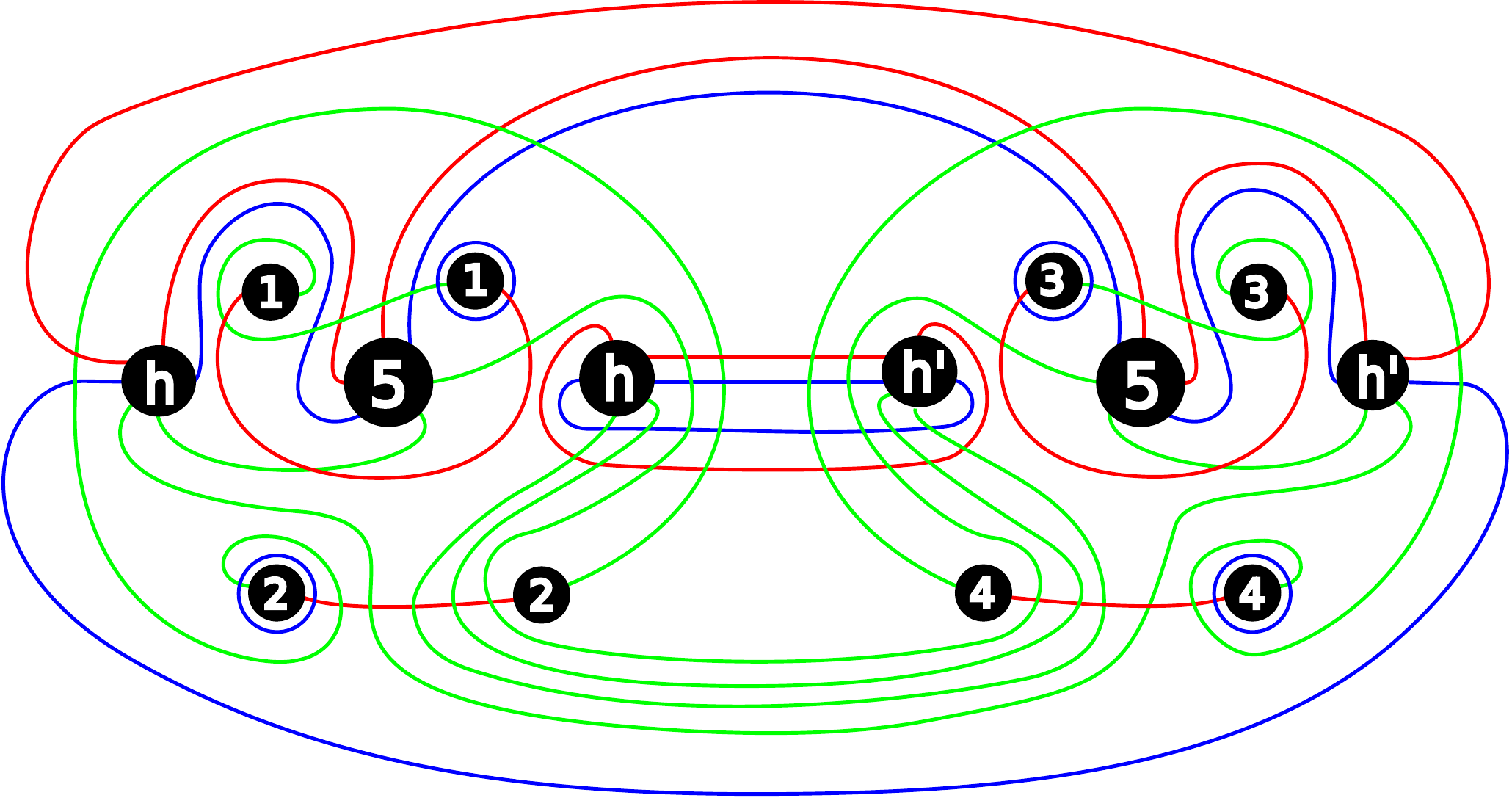}
\caption{A $(7,3)$-trisection diagram for $T^2 \times S^2$.    ~\label{fig:T2xS2}}
\end{figure}

\subsection{Twisted $S^2$-bundle over $\Si_h$} \label{subsec: twisted}  In Example~\ref{ex: eng}, we obtained a $(2h+2, 2h)$-trisection of the non-trivial $S^2$-bundle $ \Si_h \tilde{\times} S^2$. Except for $S^2 \tilde{\times} S^2 \cong \cp \# \cpb$, which has a $(2,0)$-trisection,  these bundles are not covered by the examples in \cite{gk}.

In the following, we illustrate our doubling technique described in Corollary~\ref{cor: diag}  by drawing a $(4,2)$-trisection diagram for $T^2 \tilde{\times} S^2$.  Note that $E_{1,1}$, the $B^2$-bundle over $T^2$ with Euler number $+1$, admits an achiral Lefschetz fibration over $B^2$ whose regular fiber is $\Si_{1,1}$ and which has only one singular fiber whose vanishing cycle is the  boundary parallel curve $\d \subset \Si_{1,1}$.  Therefore, by Lemma~\ref{lem: leftri}, $E_{1,1}$ has a $(2,2;1,1)$-relative trisection, whose diagram, decorated with the cut system of arcs, is depicted  in Figure~\ref{fig:E_11WithArcs}.

\begin{figure}[ht] 
\includegraphics[scale=.5]{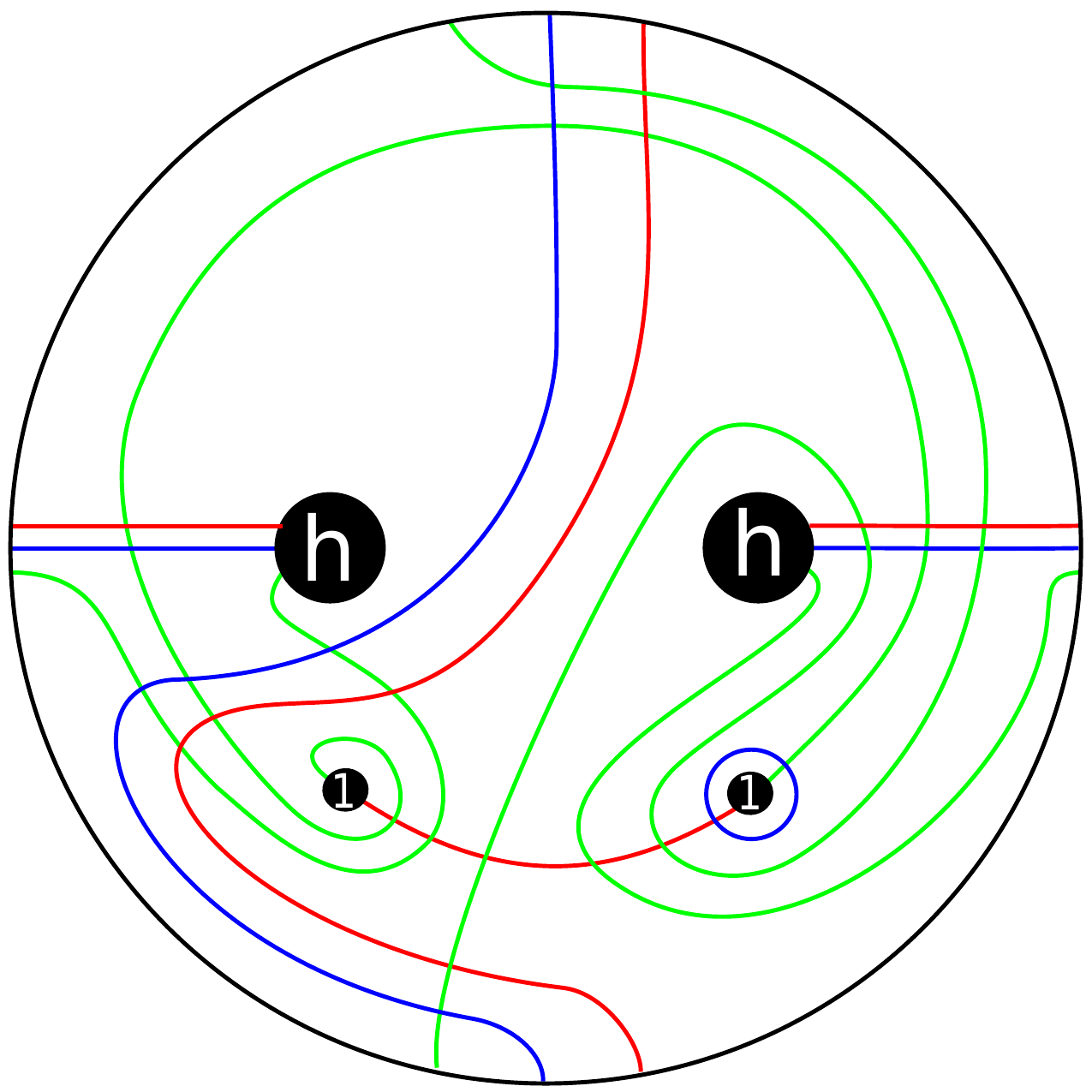}
\caption{The cut system of arcs for the $(2,2;1,1)$-relative trisection diagram of the $B^2$-bundle over $T^2$ with Euler number $+1$. ~\label{fig:E_11WithArcs}}
\end{figure}

\begin{figure}[ht] 
\includegraphics[scale=.8]{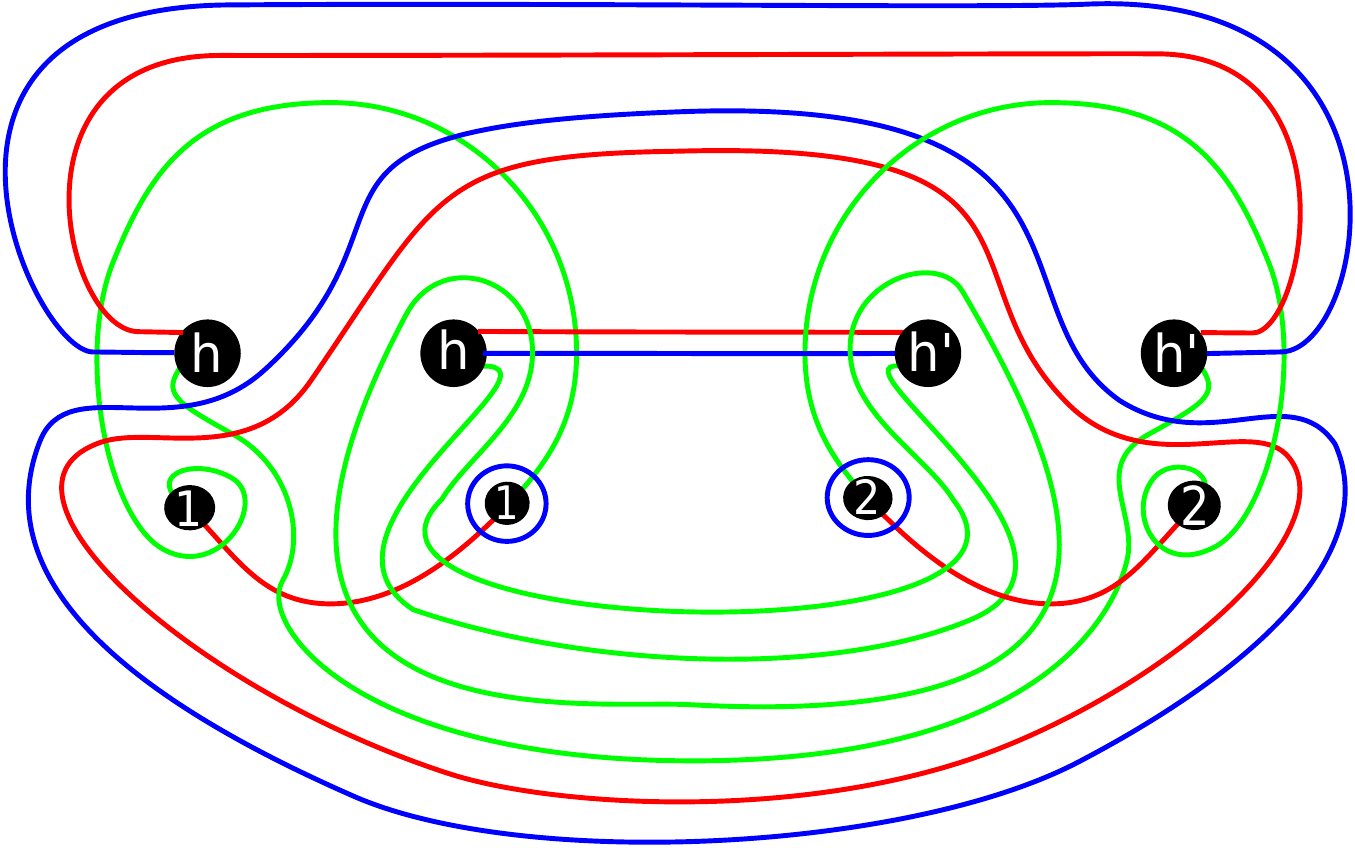}
\caption{A $(4,2)$-trisection diagram for $T^2 \tilde{\times} S^2$.   ~\label{fig:Twisted_S2xT2}}
\end{figure}

Since the double of  $E_{1,1}$ is  $T^2 \tilde{\times} S^2$, we obtain the  $(4,2)$-trisection diagram for  $T^2 \tilde{\times} S^2$ in Figure~\ref{fig:Twisted_S2xT2},  by doubling the diagram in Figure~\ref{fig:E_11WithArcs}.  This is done by drawing  Figure~\ref{fig:E_11WithArcs} and its mirror image next to it and gluing the arcs of the same color.   

  One can similarly draw a  trisection diagram of $ \Si_h \tilde{\times} S^2$  for any $ h \geq 2$. Note that  for $h=0$ our doubling technique gives the standard  $(2,0)$-trisection diagram for $\cp \# \cpb$, which is indeed diffeomorphic to $S^2 \tilde{\times} S^2$.

\subsection{Oriented $S^2$-bundles over non-orientable surfaces} \label{subsec:nonorie} There are two oriented $S^2$-bundles over any closed connected  non-orientable surface $N_h = \#^h \mathbb{RP}^2$ of genus $h$, both of which are obtained as doubles of $B^2$-bundles over   $N_h$. The disk cotangent bundle $DT^*N_h$ of $N_h$, which is diffeomorphic to the $B^2$-bundle over $N_h$ with Euler number $h-2$, admits a Lefschetz fibration over $B^2$ whose regular fiber is $\Si_{0, 2h+2}$  and which has $h+2$ singular fibers (cf. \cite{oo}). According to Lemma~\ref{lem: leftri},  $DT^*N_h$ has a $(h+2, 2h+1, 0, 2h+2)$-relative trisection, and one can draw the corresponding diagram by Remark~\ref{rem: order}. 

It follows that for any $h \geq 1$, one of the oriented $S^2$-bundles over $N_h$ has a $(4h+5, 2h+1)$-trisection by Corollary~\ref{cor: diag}, since it is the double of $DT^*N_h$.   
In particular,  a $(3,3,0,4)$-relative trisection diagram of the disk cotangent bundle of $\mathbb{RP}^2$,  a rational homology ball with boundary the lens space $L(4,1)$, is depicted in \cite{cgp}.  The  double of this rational homology ball is  an oriented $S^2$-bundle over $\mathbb{RP}^2$, which has a $(9,3)$-trisection whose corresponding diagram can be obtained by doubling the $(3,3,0,4)$-relative trisection diagram  as illustrated in the previous sections.
\section{Alternate proof of the Gay-Kirby Theorem} 

In this section, we provide an alternate proof of Theorem~\ref{thm: existence}. We refer to \cite{et, gs, os} for the definitions and properties of Lefschetz fibrations, open books, contact structures, etc.

\begin{theorem} [Gay and Kirby \cite{gk}] \label{thm: existence} Every smooth, closed, oriented, connected $4$-manifold admits a trisection.  \end{theorem}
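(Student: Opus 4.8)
The plan is to decompose $X$ into two pieces, each carrying an achiral Lefschetz fibration over $B^2$, to use contact geometry to make the open books these fibrations induce on the common boundary coincide, and then to glue the associated relative trisections using the results of Section~2. Concretely: since $X$ is smooth, compact and connected, it has a handle decomposition with a single $0$-handle and a single $4$-handle. Let $W_1$ be the union of the handles of index $\le 2$, and let $W_2$ be the union of the remaining handles, read upside down. Then $W_1$ is a $2$-handlebody (built from $0$-, $1$- and $2$-handles) and $W_2\cong \natural^c S^1\times B^3$, where $c$ is the number of $3$-handles; moreover $X = W_1\cup_M W_2$, where $M:=\del W_1 = -\del W_2$ is a closed connected $3$-manifold (in fact $M\cong\#^c S^1\times S^2$, but this is not needed). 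One could equally well place the $2$-handles in $W_2$ instead; the point is only that both pieces admit achiral Lefschetz fibrations over $B^2$.

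Next I would equip each piece with such a fibration. For $W_2\cong \Si_{0,c+1}\times B^2$ this is immediate: take the trivial fibration with no critical fibers, whose induced open book on $\del W_2$ is the trivial planar one. For the $2$-handlebody $W_1$ it is a classical fact (essentially due to Harer; see also \cite{ao, gs}) that every $2$-handlebody is the total space of an achiral Lefschetz fibration over $B^2$: one starts from the trivial fibration on the union of the $0$- and $1$-handles, braids the attaching circles of the $2$-handles onto pages of (positive stabilizations of) the induced open book, and corrects page framings to $\pm 1$ by further stabilizations, so that each $2$-handle becomes a positive or negative vanishing cycle. Let $\OB_1$ and $\OB_2$ denote the open books induced by these fibrations on $\del W_1 = M$ and $\del W_2 = -M$. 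By Lemma~\ref{lem: leftri}, each piece then carries a relative trisection realizing its induced open book, with an explicit diagram.

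The contact-geometric heart of the argument is to make $\OB_1$ and $\OB_2$ agree. Transporting $\OB_2$ to an open book $\OB_2'$ on $M$ via the orientation-reversing identity, the stable equivalence theorem of Giroux and Goodman for open books says that $\OB_1$ and $\OB_2'$ admit a common stabilization: after a finite sequence of positive and negative Hopf-band stabilizations, the two open books become isotopic. The key observation is that a positive (resp.\ negative) stabilization of the open book on the boundary of an achiral Lefschetz fibration $\pi:W\to B^2$ is realized, at the level of total spaces, by attaching a $4$-dimensional $1$-handle (enlarging the fiber) together with a $2$-handle along the stabilizing curve with page framing $-1$ (resp.\ $+1$), i.e.\ a positive (resp.\ negative) Lefschetz critical point; since the stabilizing curve runs over the new $1$-handle exactly once, this $1$/$2$-handle pair is canceling, so the total space is unchanged while only the fibration and its boundary open book are modified. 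Carrying out the handle additions dictated by Giroux--Goodman on $W_1$ and $W_2$ therefore yields achiral Lefschetz fibrations on manifolds still diffeomorphic to $W_1$ and $W_2$, now inducing the same open book $\widehat{\OB}$ on $M$. (Orientation reversal of $M$ interchanges positive and negative Hopf bands, but both signs are allowed for achiral fibrations, so this causes no difficulty.)

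Finally, applying Lemma~\ref{lem: leftri} to the new fibrations gives relative trisections of $W_1$ and $W_2$ realizing $\widehat{\OB}$ and its reverse on the two boundaries. Since the identity $\del W_1\to\del W_2$ is an orientation-reversing diffeomorphism carrying one of these open books to the other, Lemma~\ref{lem:gluem} glues the two relative trisections into a trisection of $X = W_1\cup_M W_2$, which proves the theorem; note that Cerf theory has indeed been replaced by contact geometry, entering only through the Giroux--Goodman theorem. The main obstacle to be handled carefully is exactly this Step~3: verifying that each open-book stabilization corresponds to a canceling handle pair (one honest, one achiral Lefschetz critical point) and hence leaves the underlying $4$-manifolds untouched, so that Giroux--Goodman can be applied to $\OB_1$ and $\OB_2$ without disturbing the decomposition of $X$. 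A secondary, more routine point is the classical input that $2$-handlebodies are achiral Lefschetz fibrations over $B^2$, whose proof requires braiding framed links onto pages and adjusting page framings by stabilization.
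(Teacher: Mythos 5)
Your proposal follows the same three-step strategy as the paper: decompose $X$ into two pieces carrying achiral Lefschetz fibrations over $B^2$, arrange that the induced open books on the common boundary coincide, and glue the resulting relative trisections from Lemma~\ref{lem: leftri} via Lemma~\ref{lem:gluem}. The last two steps and all of the Lefschetz-fibration machinery you invoke (Harer's construction for $2$-handlebodies, realizing an open book stabilization by a cancelling $1$/$2$-handle pair) are fine and match what the paper does. Where you diverge is in how the two boundary open books are reconciled: the paper simply invokes the Etnyre--Fuller decomposition theorem \cite[Proposition~5.1]{ef} as a black box, whereas you attempt to reprove that decomposition from scratch using the Giroux--Goodman stable equivalence theorem.

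That is where the gap lies. Giroux--Goodman does \emph{not} say that any two open books on a closed oriented $3$-manifold admit a common stabilization by positive and negative Hopf plumbings; the hypothesis is that the associated oriented plane fields are homologous, i.e.\ induce the same $\mathrm{Spin}^c$ structure on $M$. That condition is genuinely restrictive and is invariant under Hopf plumbing (a plumbing is a modification supported in a ball, and a $2$-plane field's $\mathrm{Spin}^c$ structure is determined by its restriction to the complement of a ball in $M$), so no amount of stabilization alone can create it if it fails at the start. In your setup $M=\#^{c}S^{1}\times S^{2}$ has $H^{2}(M;\Z)\cong\Z^{c}$, nonzero for $c\geq 1$, so the two plane fields coming from your chosen fibrations on $W_1$ and $W_2$ need not be homologous, and you neither verify nor arrange this. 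This is precisely the subtlety that Etnyre and Fuller, and the paper's Remark~\ref{rem: con} sketching their argument, handle: they first modify the fibrations so that the two induced plane fields become homotopic as oriented plane fields, make both overtwisted by negative stabilization, apply Eliashberg's classification, and only then invoke Giroux's common-positive-stabilization theorem. To repair your argument you would either need to produce the $\mathrm{Spin}^c$ matching yourself (which requires more than Hopf plumbing and is not addressed in your write-up) or, as the paper does, cite \cite[Proposition~5.1]{ef} directly.
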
  
\begin{proof} [Alternate proof of Theorem~\ref{thm: existence}] 

Suppose that $X$ is a closed $4$-manifold. According to Etnyre and Fuller \cite[Proposition 5.1]{ef}, there is an embedded $3$-manifold $M \subset X$ satisfying the following properties: 

\begin{itemize} 
  \item There exists a decomposition $X= W  \cup_M W'$, where $\del W = M = -\del W'$. 
  \item Each of  $W$ and $W'$  admits an achiral Lefschetz fibration over $B^2$ with bounded fibers. 
  \item The two open books induced by the respective achiral Lefschetz fibrations on $W$ and $W'$ coincide on $M$. 
  \end{itemize}

As we described in Lemma~\ref{lem: leftri},  there is a straightforward method to turn the total space of an achiral Lefschetz fibration over $B^2$ into a relative trisection so that the respective open books induced by the relative trisection and the Lefschetz fibration agree on the boundary. 

Consequently, we have a decomposition of $X$ into two pieces $W$ and $W'$,  each of which has an explicit relative trisection so that the induced open books on the boundary coincide with the appropriate orientations.  We rely on Lemma~\ref{lem:gluem}  to finish the proof of Theorem~\ref{thm: existence}.  \end{proof}

\begin{remark} \label{rem: con} Here we briefly outline the proof of the aforementioned result of Etnyre and Fuller to indicate how contact geometry enters the scene.  Suppose that the $4$-manifold $X$ is given by a handle decomposition. We first realize the union of $0$-, $1$-, and $2$-handles  as an achiral Lefschetz fibration over $B^2$, whose vanishing cycles can be explicitly  described using the technique in \cite{h}. Similarly, the union of $3$- and $4$-handles also admits a Lefschetz fibration over $B^2$. But there is indeed no reason for the open books on the boundary to coincide at this point. However, by stabilizing both achiral Lefschetz fibrations several times, the contact structures supported by the resulting open books on the boundary become homotopic as oriented plane fields. Moreover, both contact structures may be assumed to be overtwisted by negatively stabilizing, if necessary. Therefore, we conclude that the contact structures are isotopic by Eliashberg's classification \cite{e}.  We achieve the desired decomposition of $X$,  using a fundamental result of Giroux \cite{g}, which says that if two open books support isotopic contact structures on a closed $3$-manifold, then they have a common (positive) stabilization.  \end{remark}

\begin{remark} Suppose that we have a splitting of a closed $4$-manifold $X$ into two achiral Lefschetz fibrations over $B^2$ --- not necessarily by the method of Etnyre and Fuller as outlined above --- inducing the same open book on their common  boundary.  Then the next two steps in the proof of Theorem~\ref{thm: existence} provide a trisection of $X$. On the other hand, a  further positive or negative stabilization of  the common open book on the boundary yields  a stabilization of each of the Lefschetz fibrations  in the splitting of $X$, which in turn,  induces a new (higher genus) trisection of $X$.  This new trisection of $X$ can be obtained by  a stabilization of  the initial trisection.  \end{remark} 

\begin{remark} The decomposition theorem of Etnyre and Fuller that we quoted at the beginning of our proof of Theorem~\ref{thm: existence}  was improved by Baykur \cite[Theorem 5.2]{b} who showed that the Lefschetz fibration on $W$ (resp. $W'$) can be assumed to have only positively (resp. negatively) oriented Lefschetz singularities.  \end{remark}

\noindent{\bf{Acknowledgement:}} This work was initiated at the ``Trisections and low-dimensional topology" workshop at the American Institute of Mathematics (AIM). We would like express our gratitude to AIM for its hospitality.  We would also like thank D. T. Gay and R. \.{I}. Baykur for their helpful comments on a draft of this paper.

\end{document}